\documentclass[11pt,reqno,tbtags]{amsart}
\usepackage{graphicx}
\usepackage{amsmath,amsthm,amssymb}
\usepackage{amstext}
\usepackage{alltt} 
\usepackage{array,longtable}
\usepackage[T1]{fontenc}
\usepackage[english]{babel}
\usepackage[pdftex]{hyperref}

\usepackage{multirow}

\usepackage{lineno}

\newcommand {\eio}{ edge insertion operation }

\theoremstyle{plain}
\newtheorem{theorem}{Theorem}[section]
\newtheorem{lemma}[theorem]{Lemma}
\newtheorem{proposition}[theorem]{Proposition}
\newtheorem{corollary}[theorem]{Corollary}
\newtheorem{conjecture}[theorem]{Conjecture}
\newtheorem{sconjecture}[theorem]{Refuted Conjecture}
\newtheorem{observation}[theorem]{Observation}

\theoremstyle{definition}

\newtheorem{problem}{Problem}

\theoremstyle{remark}

\begin{document}

\title{Generation and Properties of Snarks}

\author{Gunnar Brinkmann}
\address{Applied Mathematics \& Computer Science,  Ghent University,Krijgslaan 281-S9, \\9000 Ghent, Belgium}
\email{Gunnar.Brinkmann@UGent.be}
\author{Jan Goedgebeur}
\address{Applied Mathematics \& Computer Science,  Ghent University,Krijgslaan 281-S9, \\9000 Ghent, Belgium}
\email{Jan.Goedgebeur@UGent.be}
\author{Jonas H{\"a}gglund}
\address{Department of Mathematics and Mathematical Statistics, Ume\aa{} universitet, S-901 87 Ume\aa, Sweden}
\email{Jonas.Hagglund@math.umu.se}
\author{Klas Markstr{\"o}m}
\address{Department of Mathematics and Mathematical Statistics, Ume\aa{} universitet, S-901 87 Ume\aa, Sweden}
\email{Klas.Markstrom@math.umu.se}

\begin{abstract} 
	For many of the unsolved problems concerning cycles and matchings in graphs  it is known that it is sufficient to prove them 
	for \emph{snarks}, the class of nontrivial 3-regular graphs which cannot be 3-edge coloured.
	
	In the first part of this paper we present a new algorithm for generating all non-isomorphic snarks of a given 
	order. Our implementation of the new algorithm is 14 times faster than previous programs for generating 
	snarks, and 29 times faster for generating weak snarks. Using this program we have generated all 
	non-isomorphic snarks on $n\leq 36$ vertices. Previously lists up to $n=28$ vertices have been published.
	
	In the second part of the paper we analyze the sets of generated snarks with respect to a number of properties 
	and conjectures.  We find that some of the strongest versions of the cycle double cover conjecture hold for all 
	snarks of these orders, as does Jaeger's Petersen colouring conjecture, which in turn implies that Fulkerson's 
	conjecture has no small counterexamples. In contrast to these positive results we also find counterexamples to eight previously 
	published conjectures concerning cycle coverings and the general cycle structure of cubic graphs.
\end{abstract} 

\maketitle

\tableofcontents

\section{Introduction}
A number of problems in graph theory can be solved in the general case
if they can be solved for cubic, or 3-regular, graphs. Examples of
such problems are the four colour problem, now a theorem, many of the
problems concerning cycle double covers and surface embeddings of
graphs, coverings by matchings, and the general structure of the cycle
space of a graph. For most of these problems one can additionally
constrain this class of graphs to the subclass of cubic graphs which
cannot be 3-edge coloured.  By the classical theorem of Vizing, a
cubic graph has chromatic index 3 or 4. Isaacs~\cite{isaacs_75} called
cubic graphs with chromatic index 3 \textit{colourable} graphs and
those with chromatic index 4 \textit{uncolourable} graphs.
Uncolourable cubic graphs with cycle separating 2- or 3-cuts, or
4-cycles can be constructed from smaller uncolourable graphs by
certain standard operations, which also behave well with respect to
most of the open problems at hand. Thus minimal counterexamples to
many problems must reside, if they exist at all, among the remaining
uncolourable cubic graphs, which are called \emph{snarks} based on an article by
Gardner~\cite{gardner_76} who used the term with weaker connectivity
requirements.
Later on, stronger criteria for
non-triviality have also been proposed \cite{snark_reduct,BrSt95_2}
but here we will focus on snarks and their properties.

So far no simple way to study the full class of snarks has been found,
e.g. there is at present no known uniform random model for snarks,
hence leaving us without a theoretical method for studying the typical
behaviour of snarks. The only available alternatives have been to
study the smallest snarks and certain families of snarks given by
specialized constructions.  Lists of snarks have already been given in
\cite{BrSt95_2,1st_snarklist,snarklist2,snarklist3}, but so far no
specialized computer program for generating snarks existed. The
fastest program was the one used in \cite{BrSt95_2} which was based on
the program described in \cite{Br96_1} and -- just like the approach
in \cite{snarklist3} -- is simply a generator for all cubic graphs
with a lower bound on the girth, combined with a filter for
colourability at the end. However, since the proportion of snarks
among the cubic graphs rapidly decreases as the number of vertices
increase, and is asymptotically zero, this approach is not feasible
for sizes even a few steps beyond those previously published.  In this
paper we give a new algorithm which augments an efficient algorithm
for generating cubic graphs, given in \cite{tricycle}, with a
look-ahead method which makes it possible to avoid constructing many
of the colourable cubic graphs, thus reducing the number of graphs
passed to the final filter step.  Our program based on this new
algorithm is 14 times faster than previous programs for generating
snarks and the speedup is increasing with the size of the generated
snarks.  Using a parallel computer for approximately 73 core-years, we
have generated all non-isomorphic snarks on $n\leq 36$ vertices.

In addition to generating all snarks up to 36 vertices we have
performed an extensive analysis of the generated snarks.  In Section 3
we present the results of this analysis. We have classified the snarks
according to a number of previously studied properties and tested a
number of conjectures, most of them related to the cycle double cover
conjecture. Many of the conjectures were found to hold for snarks of
these orders but a number of counterexamples were also found.  Let us
mention a few of these examples. We have found examples of permutation
graph snarks on 34 vertices which are cyclically 5-edge connected,
thus disproving the conjecture from \cite{MR1426132} stating that the
Petersen graph is the only such snark. We have found examples giving a
negative answer to a question from \cite{MR1239236} regarding whether
every 2-regular subgraph of a cyclically 5-edge connected snark is a
subset of a cycle double cover. We have found counterexamples to a
family of four conjectures equivalent to a conjecture from
\cite{MR2355128} regarding compatible cycle decompositions of
4-regular graphs. In the positive direction we have also used our
verification of the strong cycle double cover conjecture for our set
of snarks to prove that any $n$ vertex snark with a cycle of length at
least $n-10$ has a cycle double cover.

To conclude, the results of the analysis of the snarks of order at
most 36 show that some of the intuition gained from the study of the
smallest snarks has been misleading. Some of the unexpected behaviour
of snarks can only be found for sufficiently large snarks. As we will
see, 34 vertices seems to be a change point for many properties.  A
look at how fast the number of snarks grows also shows a change at 34
vertices, indicating that we might have just reached the size range
where the snarks display a behaviour more typical for large snarks.
We believe that the results found also give a good demonstration of
the importance of large scale computer based generation and analysis
as tools for aiding our understanding of combinatorial problems.

\section{Defintions}
A \emph{cycle} is a 2-regular connected graph. The \emph{circumference} of a graph $G$ is the number of vertices in a longest cycle in $G$. The \emph{girth} of a graph is the number of vertices in a  shortest cycle in $G$ and is denoted  $g(G)$.

A graph $G$ is \emph{cyclically $k$-edge connected} if the deletion of fewer than $k$ edges from $G$ does not create two components both of which contain at least one cycle.  The largest integer $k$ such that $G$ is cyclically $k$-edge connected is called the cyclic edge-connectivity of $G$ and is denoted $\lambda_c(G)$.

A $2$-factor of a graph $G$  is a spanning 2-regular subgraph of $G$.  The \emph{oddness} of a bridgeless cubic graph is the minimum number of odd order  components in any 2-factor of the graph.

A \emph{weak snark} is an uncolourable cyclically 4-edge connected cubic graph with girth at least 4. A \emph{snark} is an uncolourable cyclically 4-edge connected cubic graph with girth at least 5.

 \section{Generation of snarks}
\subsection{The Generation Algorithm}
As mentioned in the introduction, the older programs for generating
snarks were built by adding a filter for graphs with the desired
properties to a program which generates all cubic graphs of a given
order.  The efficiency of an approach that generates a larger class of
graphs and filters the output for graphs in a smaller class depends on
one hand on the cost for the filter and on the other on the ratio
between the number of graphs in the large and small class. On both
criteria the generation of all cubic graphs with girth $4$ and
filtering for weak snarks scores badly: testing for 3-colourability is
NP-complete and already for 28 vertices only 0.00015\% of the cubic
graphs with girth at least 4 are weak snarks (and only 0.00044\% of
the cubic graphs with girth at least 5 are snarks). In fact the ratio
is even decreasing with the number of vertices. Nevertheless no better
way to generate weak snarks was known until now. In this paper we
present a method that -- although not generating only weak snarks --
at least allows a limited look-ahead and increases the ratio of weak
snarks among the graphs with the largest number of vertices by a
factor of 85 to about 0.0135\% for 28 vertices (and by a factor of 20
to about 0.0076\% for snarks). But also here the ratio of (weak)
snarks is decreasing with the number of vertices

The new generation algorithm is based on the algorithm described in
\cite{tricycle}. That algorithm was developed for the generation of
all cubic graphs without girth restriction, but it can also generate
cubic graphs with girth at least 4 or 5 efficiently. In that algorithm
pairwise non-isomorphic connected cubic graphs are generated from
$K_4$ by first building {\em prime} graphs with the operations
(a),(b),(c) in Figure ~\ref{fig:cubeop} and afterwards applying
operation (d). For details on how the algorithm makes sure that no
isomorphic copies are generated, see \cite{tricycle}.

\begin{figure}[h!t]
	\centering
	\includegraphics[trim=0mm 0mm 0mm 0mm, clip, width=1.0\textwidth]{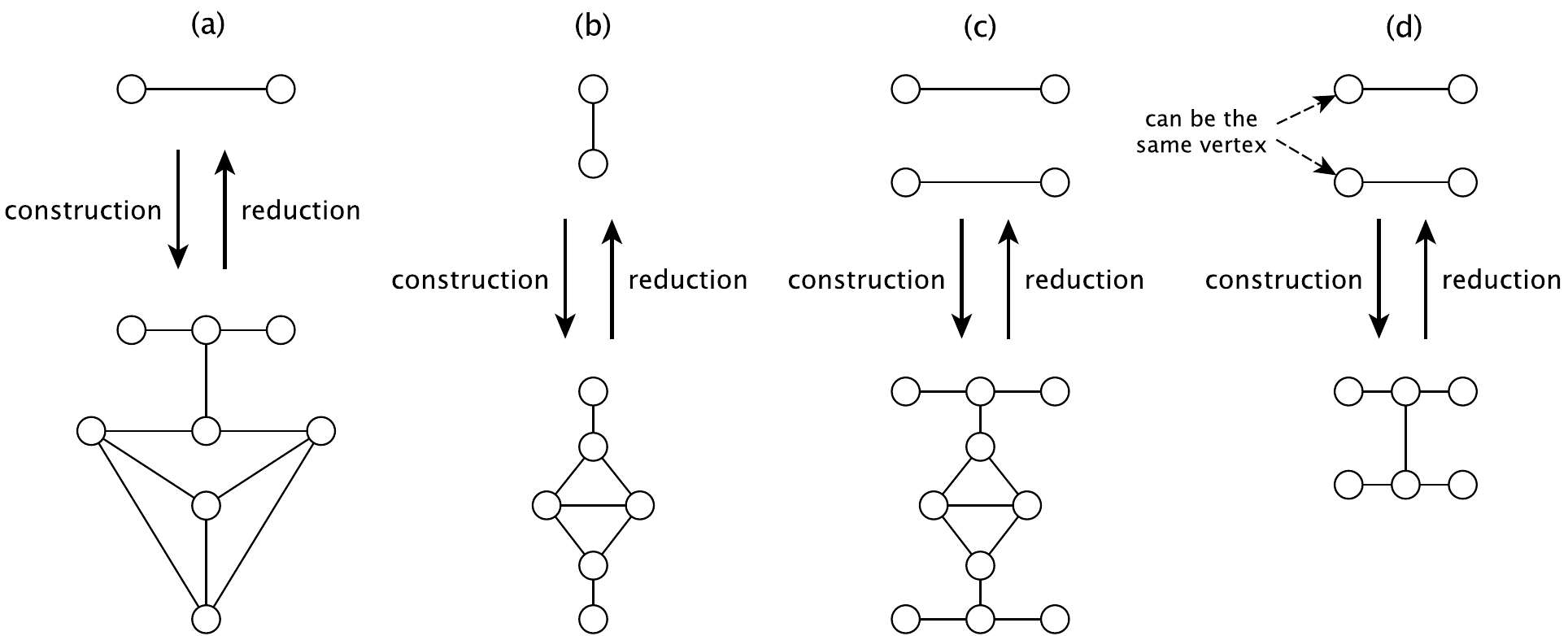}
	\caption{The construction operations for cubic graphs.}
	\label{fig:cubeop}
\end{figure}

Prime graphs are never 3-connected, so the last operation will always
be of type (d) given in Figure~\ref{fig:cubeop}. Triangles are
constructed by bundling these operations (for details see
\cite{tricycle}), but since weak snarks have girth at least 4, the
last operation will always be a single edge-insertion operation
applied to two disjoint edges.

The new algorithm implements a look-ahead that can often 
decide that an \eio would lead to a colourable graph and can therefore be avoided. 
As the number of cubic graphs grows fast with the 
number of vertices, just avoiding the insertion of the last edge gives already a considerable speedup.

We use the following well known result.
\begin{lemma} \label{theorem_2-factor}
	A cubic graph is colourable if and only if it has oddness 0. That is, it has a 2-factor with all cycles of even length, also called an even 2-factor.
\end{lemma}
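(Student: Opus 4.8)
The statement is the classical equivalence between 3-edge-colourability of a cubic graph and the existence of an even 2-factor. I will prove both implications directly, moving freely between the colour classes of a proper 3-edge-colouring and 2-factors of the graph.

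First I would establish the forward direction: if $G$ is colourable, then it has an even 2-factor. Fix a proper 3-edge-colouring with colours $\{1,2,3\}$, and let $M_i$ denote the set of edges of colour $i$. Since every vertex has degree $3$ and sees each colour exactly once, each $M_i$ is a perfect matching. Now consider $G - M_3 = M_1 \cup M_2$: every vertex is incident with exactly one edge of $M_1$ and one of $M_2$, so this is a spanning 2-regular subgraph, i.e. a 2-factor $F$. Moreover every cycle in $F$ alternates between edges of $M_1$ and $M_2$ (two consecutive edges at a vertex cannot have the same colour), so each cycle has even length. Hence $F$ is an even 2-factor and the oddness is $0$.

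For the reverse direction, suppose $G$ has an even 2-factor $F$. The complement $G - E(F)$ is a perfect matching $M$ (each vertex, having degree $2$ in $F$, has degree $1$ outside it); colour every edge of $M$ with colour $3$. Each component of $F$ is an even cycle, so its edges can be properly 2-coloured with colours $\{1,2\}$ alternately. This produces a colouring of all edges of $G$; I then check it is proper: at each vertex the two $F$-edges get distinct colours in $\{1,2\}$ by the alternation, and the unique $M$-edge gets colour $3$, so no two edges at a common vertex share a colour. Thus $\chi'(G)=3$, i.e. $G$ is colourable. Combined with the definition of oddness (the minimum number of odd components over all 2-factors), "colourable" is equivalent to "oddness $0$".

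I do not expect a genuine obstacle here — the only points requiring a line of care are the observations that each colour class is a perfect matching (uses cubicity plus properness), that a 2-coloured union of two perfect matchings has only even cycles, and, in the converse, that a 2-factor whose every component has even length can be alternately 2-coloured. The last step tacitly uses that $G$ being bridgeless (cubic, hence having a perfect matching by Petersen's theorem, or simply because $F$ exists) guarantees the complement of an even 2-factor is indeed a perfect matching; since we are handed the 2-factor, this is immediate.
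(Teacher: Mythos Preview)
Your argument is correct and is exactly the standard proof of this classical equivalence. The paper itself does not supply a proof: it introduces the lemma with the phrase ``We use the following well known result'' and moves on, so there is nothing to compare against beyond noting that your write-up fills in precisely the elementary argument the authors chose to omit.
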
 

This is applied in the following way:

\begin{lemma}\label{lem:2factoredge}
	Given an even 2-factor $F$ in a cubic graph $G$. All graphs $G'$ obtained by applying the \eio to two edges $e,e'$ which are part of the same cycle 
	in $F$ will be colourable.
\end{lemma}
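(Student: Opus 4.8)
The plan is to invoke Lemma~\ref{theorem_2-factor}: since $G'$ is cubic, it is enough to exhibit an \emph{even} $2$-factor of $G'$, and the natural way to do this is to modify $F$ only along the cycle of $F$ that carries both $e$ and $e'$. Recall that the \eiop replaces $e=uv$ by a path $u\,a\,v$ through a new vertex $a$, replaces $e'=xy$ by a path $x\,b\,y$ through a new vertex $b$, and adds the edge $ab$; hence $V(G')=V(G)\cup\{a,b\}$ and every edge of $G$ other than $e$ and $e'$ still occurs in $G'$.

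First I would pin down the structure of the cycle $C$ of $F$ containing $e$ and $e'$. Since $e$ and $e'$ are disjoint edges of $C$, the four endpoints $u,v,x,y$ are pairwise distinct, and deleting $e$ and $e'$ from $C$ breaks it into two paths, say $Q_1$ from $v$ to $x$ and $Q_2$ from $y$ to $u$ (swap the labels $x$ and $y$ if necessary). Thus $|E(C)|=|E(Q_1)|+|E(Q_2)|+2$, and since $F$ is an even $2$-factor, $|E(Q_1)|$ and $|E(Q_2)|$ have the same parity.

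Next I would assemble the candidate $2$-factor $F'$ of $G'$: keep every cycle of $F$ except $C$ (these are untouched by the operation and stay even), and replace $C$ by the single cycle
\[
C' \;=\; v \xrightarrow{\;Q_1\;} x \,\to\, b \,\to\, y \xrightarrow{\;Q_2\;} u \,\to\, a \,\to\, v ,
\]
which traverses the edges of $Q_1$ and $Q_2$ together with $xb$, $by$, $ua$, $av$, leaving $ab$ in the complementary perfect matching. A short check confirms that $F'$ is a genuine $2$-factor of $G'$: the interiors of $Q_1$ and $Q_2$ keep their two $F$-edges, each of $u,v,x,y$ receives exactly two $F'$-edges, $a$ and $b$ have degree $2$ in $C'$, and $V(C')=V(C)\cup\{a,b\}$, so $F'$ is spanning. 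Finally $|E(C')|=|E(C)|+2$, which is even, so every cycle of $F'$ is even; by Lemma~\ref{theorem_2-factor}, $G'$ is colourable.

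There is no serious obstacle in this argument. The only points that need care are the routine bookkeeping that the reroute $C'$, glued to the remaining cycles of $F$, really is $2$-regular at every vertex of $C$ and covers $a$ and $b$, and noticing where the hypothesis actually enters: it is precisely because $e$ and $e'$ lie on the \emph{same} cycle $C$ that the reroute can absorb both subdivision vertices $a,b$ into one cycle $C'$ whose length exceeds $|E(C)|$ by an even number and is therefore automatically even; this construction would not go through verbatim if $e$ and $e'$ lay on two different cycles of $F$.
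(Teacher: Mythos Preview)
Your proof is correct and is precisely the paper's argument, spelled out in greater detail: the paper's one-line justification is that replacing $e$ and $e'$ in $F$ by their subdivision edges yields an even $2$-factor of $G'$, which is exactly your construction of $C'$ with $|E(C')|=|E(C)|+2$.
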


Replacing $e$ resp. $e'$ in $F$ with the two edges that result from the subdivision gives an even 2-factor in $G'$.

Lemma~\ref{lem:2factoredge} is applied by using that for any two colours $a\not= b$ the sets of edges coloured $a$ or $b$ form a 2-factor
with all cycles of even length: 
\begin{theorem}\label{theorem_add_edge_cycle}
	Given a cubic graph $G$ and a 3-colouring of $G$. If two edges
	$e,e'$ belong to the same cycle of a 2-factor induced by two different
	colours, the graph $G'$ obtained by applying the \eio to $e,e'$
	will be colourable.
\end{theorem}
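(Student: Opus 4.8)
The plan is to obtain this statement as an essentially immediate consequence of Lemma~\ref{lem:2factoredge} together with the characterization of colourability in Lemma~\ref{theorem_2-factor}. The only genuinely new ingredient is the classical observation, already anticipated in the paragraph preceding the statement, that a proper $3$-edge-colouring of a cubic graph yields even $2$-factors. Concretely, I would first fix two of the three colours, say $a\neq b$; since the colouring is proper and the graph is exactly cubic, every vertex is incident with exactly one edge coloured $a$ and exactly one edge coloured $b$. Hence the set $F_{ab}$ of all edges coloured $a$ or $b$ is a spanning $2$-regular subgraph, i.e.\ a $2$-factor, and along each of its cycles the colours of consecutive edges alternate between $a$ and $b$, so every such cycle has even length. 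This is precisely the object the statement calls ``a $2$-factor induced by two different colours''.

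With this in hand, the hypothesis says that $e$ and $e'$ lie on a common cycle of the even $2$-factor $F=F_{ab}$, so Lemma~\ref{lem:2factoredge} applies verbatim and gives that the graph $G'$ obtained by the \eio on $e,e'$ is colourable, which is exactly the conclusion. For completeness I would also spell out the one-line justification of Lemma~\ref{lem:2factoredge}: writing $w,w'$ for the two subdivision vertices of $G'$ and $uv=e$, $xy=e'$ (with $u,v,x,y$ distinct, as the edges are disjoint), the set $F'=(F\setminus\{e,e'\})\cup\{uw,wv,xw',w'y\}$ is a $2$-factor of $G'$ — every original vertex keeps degree $2$, and $w,w'$ each acquire degree $2$ — and $F'$ differs from $F$ only in that the cycle through $e$ and $e'$ has been lengthened by two vertices. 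Hence all cycle lengths of $F'$ remain even, so $G'$ has oddness $0$ and is colourable by Lemma~\ref{theorem_2-factor}; note that the new edge $ww'$ is simply left unused by $F'$.

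There is no real obstacle here: the argument is short bookkeeping. The only points that merit a little care are (i) checking that a union of two colour classes is genuinely $2$-regular rather than merely of maximum degree $2$, which is where properness of the colouring and $3$-regularity of $G$ both enter; (ii) tracking the parity of the affected cycle under the two subdivisions, the key observation being that inserting two vertices preserves the parity of the cycle length; and (iii) recording that $e,e'$ lying on a common cycle of $F_{ab}$ is consistent with the $\eiop$ being applied (in the generation context they are in addition disjoint, so no short cycle through $ww'$ is created, although the colourability conclusion itself does not rely on this).
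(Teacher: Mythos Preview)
Your proposal is correct and follows exactly the paper's own route: the sentence immediately preceding the theorem records that any two colour classes of a proper $3$-edge-colouring form an even $2$-factor, and the theorem is then an instance of Lemma~\ref{lem:2factoredge} (whose one-line justification you also reproduce). There is nothing to add.
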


Assume that snarks on $n$ vertices are to be constructed and a graph
$G$ on $n-2$ vertices has to be extended. Then the algorithm first
tries to construct a 3-colouring of $G$. If $G$ does not have a
3-colouring, Theorem~\ref{theorem_add_edge_cycle} can not be
applied. But the ratio of uncolourable bridgeless cubic graphs 
is very small. If a 3-colouring is found, then the complement
of each colour class is an even 2-factor. So each 3-colouring of $G$
gives three even 2-factors. When computing the edge pairs for
extensions, edge pairs with both edges in the same cycle of one of
these 2-factors are not considered.

Computing different 3-colourings -- that is 3-colourings producing
different 2-factors -- allows to detect more edge pairs that do not
lead to snarks. We compute different 3-colourings in two ways:
interchanging the colours in a non-hamiltonian cycle in one of the
2-factors gives a colouring where two of the three induced 2-factors
are different. So this is an efficient way to get a different
3-colouring. We use one such modified colouring. Another way is to
compute a different colouring from scratch by forcing some initial
colours in order to guarantee a different colouring.

For 28 vertices the first colouring allows in average to discard $85\%$ of the edge pairs. The colour changes
in the non-hamiltonian cycles discards another $11\%$. A second colouring discards $3.4\%$.
The cost for computing a third colouring turned out to be higher than the gain.

The following Theorem~\ref{theorem_square} gives another criterion to
avoid the generation of colourable graphs. It is folklore in
the community studying non-triviality of snarks, so we will not give a
proof here.

\begin{theorem}\label{theorem_square}
	Given a colourable graph $G$, all graphs $G'$ obtained from $G$ by the \eio so that
	the edge with the two new end-vertices is part of a 4-cycle in $G'$ are colourable.
\end{theorem}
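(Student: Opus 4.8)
The plan is to reduce the statement to Theorem~\ref{theorem_add_edge_cycle}: it suffices to produce a proper $3$-edge-colouring of $G$ under which the two edges that get subdivided by the \eio lie on a common cycle of one of the $2$-factors induced by a pair of colours.

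First I would translate the $4$-cycle hypothesis into a structural statement about $G$. Write the operation explicitly: it subdivides an edge $e=uv$ of $G$ by a new vertex $a$, subdivides a second (disjoint) edge $e'=xy$ by a new vertex $b$, and adds the edge $ab$. In $G'$ the only neighbours of $a$ are $u,v,b$ and the only neighbours of $b$ are $x,y,a$, so a $4$-cycle through $ab$ must be $a$-$b$-$p$-$q$-$a$ with $p\in\{x,y\}$, $q\in\{u,v\}$, and the edge $pq$ present in $G'$; since $uv$ and $xy$ are no longer edges of $G'$, this $pq$ is one of $ux,uy,vx,vy$ and hence an edge of $G$. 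After possibly interchanging $u\leftrightarrow v$ and/or $x\leftrightarrow y$ we may thus assume $ux\in E(G)$, so that the $4$-cycle is $u$-$a$-$b$-$x$-$u$.

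Now fix any proper $3$-edge-colouring $\phi$ of $G$ (it exists since $G$ is colourable) and set $c=\phi(ux)$. Because $ux$ meets $e$ at $u$ and $e'$ at $x$, both $\phi(e)$ and $\phi(e')$ are colours different from $c$. I would then split into two cases. If $\phi(e)=\phi(e')=:d$, consider the $2$-factor $F$ induced by the colour pair $\{c,d\}$: at $u$ its two incident edges are precisely $ux$ and $e$, and at $x$ they are precisely $ux$ and $e'$, so the unique cycle of $F$ through $ux$ contains $e$ (via $u$) and $e'$ (via $x$); Theorem~\ref{theorem_add_edge_cycle} then gives that $G'$ is colourable. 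If instead $\phi(e)\ne\phi(e')$, then $\{\phi(e),\phi(e')\}$ is exactly the pair of colours different from $c$; let $F'$ be the $2$-factor they induce, which contains both $e$ and $e'$. If $e$ and $e'$ lie on the same cycle of $F'$ we are done by Theorem~\ref{theorem_add_edge_cycle}; otherwise I would interchange the two colours of $F'$ along the cycle of $F'$ containing $e$. This Kempe interchange yields a new proper $3$-edge-colouring in which $e$ has acquired the colour of $e'$, while $\phi(ux)=c$ is untouched (its colour lies outside $F'$) and $\phi(e')$ is untouched (it sits on a different cycle of $F'$), so we are back in the first case.

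The only place where anything real happens is this last sub-case, and there a single colour interchange suffices; all the rest is the observation that the edge $ux$ forces $e$ and $e'$ onto one cycle of the $2$-factor spanned by $c$ and their common colour. Because $e$ and $e'$ are disjoint we have $\{u,v\}\cap\{x,y\}=\varnothing$, and the argument never refers to the third neighbours of $u$ or $x$, so I do not expect any degenerate coincidences among $u,v,x,y$ and their neighbours to need separate treatment.
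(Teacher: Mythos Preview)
Your argument is correct. The paper itself does not prove Theorem~\ref{theorem_square}; it explicitly declares the result folklore and omits the proof. So there is nothing to compare against line by line, but your write-up is exactly the kind of argument one would expect to supply.

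The structural reduction at the start is right: a $4$-cycle through the new edge $ab$ forces an edge $ux\in E(G)$ between an endpoint of $e$ and an endpoint of $e'$. From there your two cases are clean. In the equal-colour case the $\{c,d\}$-factor visibly passes $e$--$ux$--$e'$ consecutively, so $e$ and $e'$ share a cycle and Theorem~\ref{theorem_add_edge_cycle} applies. In the unequal-colour case the Kempe swap along the component of the $\{\phi(e),\phi(e')\}$-factor containing $e$ is the standard move; it leaves $ux$ (colour $c$) and $e'$ (other component) untouched and puts you back in the first case. No hidden degeneracies arise, since $\{u,v\}\cap\{x,y\}=\varnothing$ guarantees that the candidate edge $pq$ is genuinely one of $ux,uy,vx,vy$ and not one of the suppressed edges $uv$, $xy$.

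If anything, one could shorten the presentation by noting that after the Kempe swap you may simply assume $\phi(e)=\phi(e')$ from the outset; but as written the proof is complete.
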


So if we want to generate all snarks with $n$ vertices, we do not have
to apply the edge operation to colourable graphs with $n-2$ vertices
if the inserted edge will be part of a square. 

At first sight Theorem~\ref{theorem_square} looks only interesting for
operations that do not produce triangles, because in the last step we
never produce triangles. But in fact for triangles it allows an
earlier look-ahead. Though for details of isomorphism rejection we
refer the reader to \cite{tricycle}, one fact is important in this
context: the last edge inserted is always an edge in the smallest
cycle.

So assume that we have a graph $G$ with $n-2$ vertices and at least
one triangle and that we want to construct weak snarks with $n$ vertices.
From this graph we can only get graphs with girth at most $4$ -- so
the last edge inserted will be in a 4-gon. If $G$ was colourable,
all descendants will also be colourable and we don't have to construct
colourable graphs with triangles of size $n-2$. So for triangles
Theorem~\ref{theorem_square} gives a bounding criterion that can
already be applied on level $n-4$. In fact the generation algorithm
{\em bundles} triangle operations (see \cite{tricycle}), but this is
only a difference in detail.

\subsection{Testing and running times}
We compared all snarks up to 32 vertices with the snarks which were generated by the program {\em minibaum} \cite{BrSt95_2}. The {\em minibaum} computation was independently run twice, on different machines in Belgium and Sweden. The results were in complete agreement.

For all snarks generated, we used an independent program to check the chromatic index and whether they are all pairwise non-isomorphic. As before this was done independently on separate machines  in Belgium and Sweden. The chromatic index was also checked a third time by computing the oddness of all generated graphs, see section \ref{sec:2fac}.

In Table \ref{tab:time} we present the running times for generating all snarks and weak snarks of that order, for $n\leq 34$. These values were obtained on a machine with an Intel Xeon L5520 processor running at 2.27 GHz. 
\begin{table}[h!]
		\begin{tabular}{r || rrrr}
		Order & weak snarks & snarks  & speedup for snarks\\
		\hline
		20 	& 0.1                   & 0.1                	   & 8.00\\
		22  	& 1.5                   & 1                	   &10.90\\
		24  	& 23.6                 & 15.3	            &11.86\\
		26  	& 431                  & 	272            &13.04\\
		28  	& 8873                & 	5304	          &14.18\\
		30  	&  209897          & 128875 	 &\\
		32   	&  4976553        & 2875911 	&\\	
		34 	&  119586562   & 	66519829 &\\
		\end{tabular}
		\caption{Running time in seconds. The speedup value is the quotient of the running time for minibaum and that of snarkhunter, for snarks.}
		\label{tab:time}
\end{table}

The total running time for  generating all snarks on $n=36$ vertices was approximately 73 core years, primarily running on a cluster with Intel Harpertown 2.66 GHz CPUs. An estimate based on running a partial generation for $n=38$ is that generating all snarks on this order would require 1100 core years of computing time, on the  clusters used for $n=36$.

The generator described here is called \textit{snarkhunter} and can be downloaded from \verb+ http://caagt.ugent.be/cubic/+.
 
\section{The number of snarks and their properties}
We have generated all snarks of order up to 36 and all weak snarks of
order 34 and less. The number of snarks with given girth and cyclic
connectivity can be found in Table \ref{sfig1}.  For each of the
properties in the coming sections testing programs were written
independently by the groups in Belgium and Sweden. The programs were
run on separate computers in the two countries in order to have
verification via different programs as well as different hardware.
All tests were in complete agreement for $n \leq 30$. For some of the
more time consuming tests only one group ran the full test for larger
$n$.
\begin{table}
		\begin{tabular}{r || rrrrrr}
		Order & weak & $\lambda_c \geq 4$ &  $\lambda_c \geq 5$ & $\lambda_c \geq 6$ & $g\geq 6$ & $g\geq 7$\\
		\hline
		10 	& 1     & 1 				& 1 		& 0	\\
		12,14,16 & 0 	  & -		         & -		& -	\\
		18 	& 2     & 2 				& 0		& - 	\\
		20 	& 6     & 6 				& 1 		& 0 	\\
		22  	& 31   & 20				& 2		& 0	\\
		24  	& 155     & 38				& 2		& 0	\\
		26  	& 1 297         & 280			& 10		& 0	\\
		28  	& 12 517       & 2 900			& 75		& 1 & 1 & 0	\\
		30  	& 139 854     & 28 399		& 509 	& 0	& 1 & 0 \\
		32   	& 1 764 950   & 293 059		& 2953 	& 0 & 0 & 0 \\	
		34 	& 25 286 953 & 3 833 587 		& 19 935	& 0& 0& 0\\
		36    & ? & 60 167 732 & 180 612	& 1 & 1& 0\\
		\end{tabular}
		\caption{The number of snarks.}
		\label{sfig1}
\end{table}

From \verb+http://hog.grinvin.org/Snarks+
all graphs listed in Table \ref{sfig1} can be downloaded.


\subsection{Hypohamiltonian snarks}
A graph $G$ is \emph{hypohamiltonian} if $G$ is not hamiltonian but
$G-v$ is hamiltonian for every vertex $v$ in $G$.  Hypohamiltonian
snarks have been studied by a number of authors in connection with the
cycle double cover conjecture, see Section \ref{sec:cdc}, and
Sabidussi's Compatibility Conjecture \cite{FH09}. In \cite{CMRS} the
number of hypohamiltonian snarks on $n\leq 28$ vertices were given.
	
The number of hypohamiltonian snarks on $n\leq 36$ vertices can be
found in Table \ref{numhypo}, as well as the number of those which are
also cyclically 5-edge connected. We note the sudden increase in both
numbers at 34 vertices.
\begin{table}
		\begin{tabular}{r ||   rr | rr | c}
	\multirow{2}{*}{Order}  & \multicolumn{2}{c}{hypohamiltonian} \vline & \multicolumn{2}{c}{permutation} \vline & \multirow{2}{*}{circumference $n-2$}\\
		 & total & $\lambda_c \geq 5$& total & $\lambda_c \geq 5$& \\
		\hline
		10 	& 1 		&  1	& 1 & 1 & 0\\
		18 	& 2 		&  0 	& 2 & 0 & 0\\
		20 	& 1 		&  1 	& 0 & 0 & 0\\
		22  	& 2		&  2	& 0 & 0 & 0\\
		24  	& 0		&  0	& 0 & 0 & 0 \\
		26  	& 95		&  8	& 64 & 0 & 0\\
		28  	& 31		&  1	& 0 & 0 & 3\\
		30  	& 104	&  11& 0 & 0 & 8\\
		32   	& 13		& 13 & 0 & 0 & 32\\
		34 	& 31198	& 1497 & 10771 & 12& 104\\
		36	& 10838	& 464 & 0 & 0& 1143\\
		\end{tabular}
		\caption{The numbers of hypohamiltonian snarks, permutation snarks, and snarks with circumference $n-2$.}
		\label{numhypo}	\label{numperm}   \label{tabcirc}
\end{table}

\subsection{Permutation snarks}
A cubic graph is a \emph{permutation graph} if it has a 2-factor that consists of two induced cycles. These graphs have also been called cycle permutation graphs \cite{PS81} and generalized prisms \cite{klee}.  A cubic graph is called a \emph{permutation snark} if it is both a snark and a permutation graph. 
	
It is obvious that all permutation snarks must be of order $2\mod 4$ since otherwise the graph would have a 2-factor that consists of two even components and therefore be colourable. The number of small permutation snarks can be found in Table \ref{numperm}. Looking at  Table \ref{numperm} an obvious question arises:
\begin{problem}
	Do all permutation snarks have $8k+2$ vertices, where $k=1, 2, 3,...$?
\end{problem}
It is well known that the Petersen graph is a permutation snark and in \cite{MR1426132} it was conjectured that this is the only permutation snark which is also cyclically 5-edge connected.
\begin{sconjecture}[Zhang \cite{MR1426132}]\label{conj:zhang}
	Let $G$ be a cubic cyclically 5-edge-connected permutation graph. If $G$ is a snark, then $G$ must be the Petersen graph. 
\end{sconjecture}
However, there are 12 cyclically 5-edge-connected permutation snarks on 34 vertices which provide counterexamples to  Conjecture \ref{conj:zhang}. One of these graphs can be found in Figure \ref{cyc5perm} and the full set is given in Appendix \ref{app:ex1}. They can also be obtained from \textit{House of Graphs}~\cite{HOG} by searching for the keywords ``counterexample * zhang''.
\begin{observation}\label{obs:perm12}
	Conjecture \ref{conj:zhang} is false. The smallest counterexamples have 34 vertices, and there are exactly 12 counterexamples of that order.
\end{observation}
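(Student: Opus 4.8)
The statement is computational and rests on the complete census of snarks obtained in Section~3. As noted above, a permutation graph that is a snark has a $2$-factor consisting of two induced cycles of equal length $a$, so its order equals $2a$ with $a$ odd; in particular it is $\equiv 2 \pmod 4$. Hence it suffices to inspect the snarks of orders $10,14,18,22,26,30$ and $34$, and for each of these orders \emph{snarkhunter} produces a complete list, whose completeness is corroborated by the independent generation with \emph{minibaum} (up to $32$ vertices) and by the internal consistency checks described in Section~3.

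The plan is then to decide, for each snark $G$ in this list, the permutation-graph property directly from the definition. Since $G$ is cubic, a $2$-factor of $G$ is exactly the complement $G\setminus M$ of a perfect matching $M$, so one enumerates the perfect matchings of $G$ and tests whether, for some $M$, the graph $G\setminus M$ has exactly two components and no edge of $M$ has both ends in the same component; this is precisely the condition that $G$ has a $2$-factor consisting of two induced cycles. For a bridgeless cubic graph of order at most $34$ the number of perfect matchings is small enough for this enumeration to be routine. Intersecting the resulting set with the snarks of cyclic edge-connectivity at least $5$ --- a quantity already computed for Table~\ref{sfig1} --- yields, for each order, exactly the cyclically $5$-edge-connected permutation snarks.

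Reading off the outcome: below $34$ vertices the only cyclically $5$-edge-connected permutation snark returned is the Petersen graph on $10$ vertices; the permutation snarks on $18$ and $26$ vertices all have cyclic edge-connectivity $4$ (Table~\ref{numperm}), and there are no permutation snarks on $14, 22$ or $30$ vertices. Thus Conjecture~\ref{conj:zhang} has no counterexample on fewer than $34$ vertices. On $34$ vertices the computation returns exactly $12$ cyclically $5$-edge-connected permutation snarks; each has order $34\neq 10$ and hence differs from the Petersen graph, so each is a counterexample, and $34$ is the least order at which one occurs. A canonical-form computation (e.g.\ via \texttt{nauty}) confirms that the $12$ graphs are pairwise non-isomorphic; they are listed explicitly in Appendix~\ref{app:ex1}.

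The mathematical content is light, so the main obstacle is ensuring the reliability of the computation rather than any argument. Three points need care: completeness of the snark census up to $34$ vertices; correctness of the permutation-graph test and of the cyclic-edge-connectivity test; and a final direct check that the $12$ output graphs are genuinely snarks (cubic, girth $\ge 5$, cyclically $4$-edge-connected, and uncolourable --- the last most cheaply verified via Lemma~\ref{theorem_2-factor} by computing the oddness) and genuinely permutation graphs. As described in Section~3, each of these was handled by running independently written programs on separate hardware in Belgium and Sweden and checking that the results agree, which is what makes the exact count of $12$ trustworthy.
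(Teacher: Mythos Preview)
Your proposal is correct and follows essentially the same approach as the paper: an exhaustive computer search through the complete census of snarks, filtering for the permutation-graph property and for $\lambda_c\ge 5$, with independent verification. Your added justification that permutation snarks have order $\equiv 2\pmod 4$ (hence only those orders need be examined) and your description of the perfect-matching-based test make explicit what the paper leaves implicit, but the underlying method is the same.
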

These 12 graphs also have some other interesting properties as we shall see in Section \ref{s5}.		
\begin{figure}[h!]	
	\includegraphics[scale=0.3]{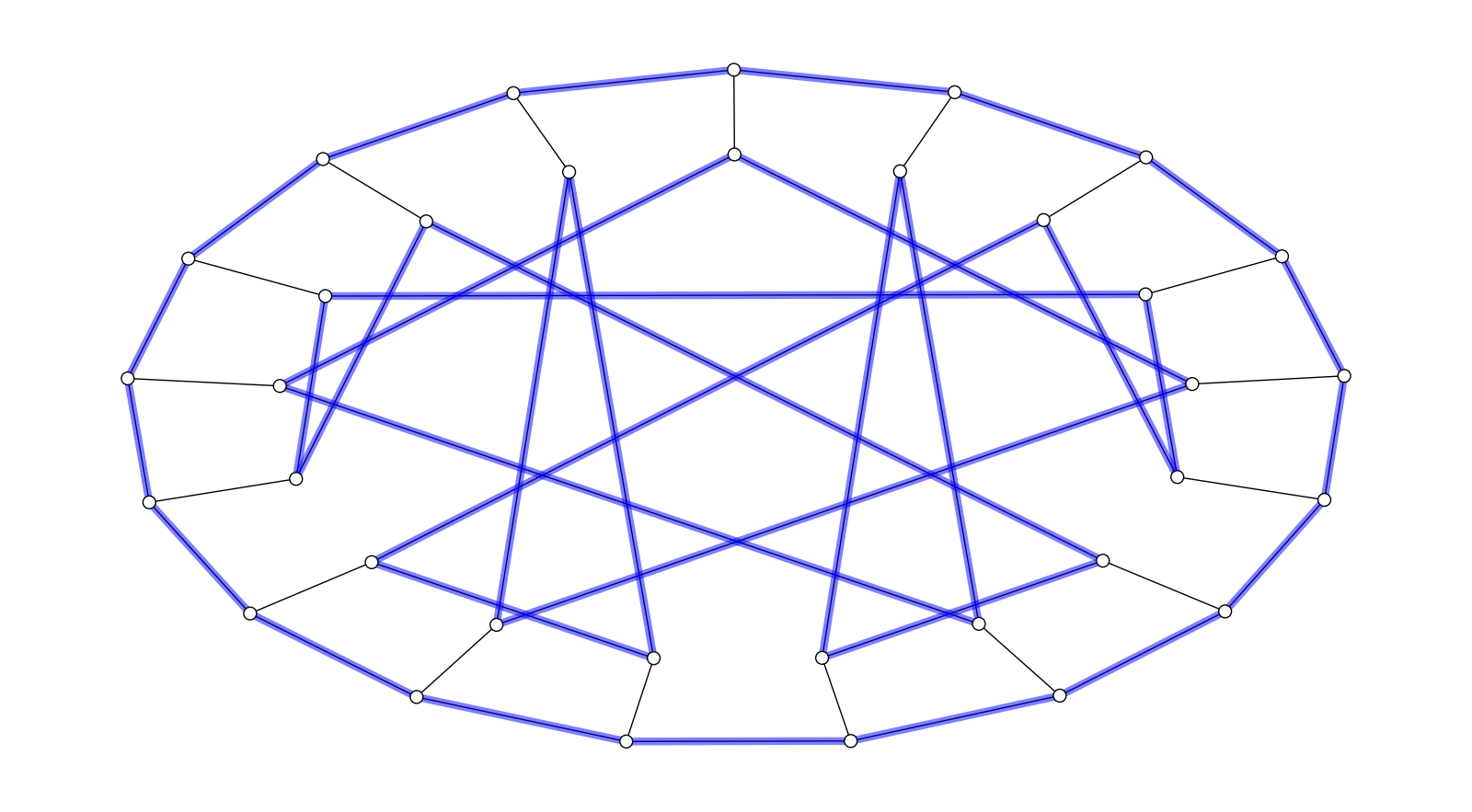}
		\caption{One of the twelve cyclically 5-edge-connected permutation snarks on 34 vertices. The bold cycles correspond to the 2-factor.}
	\label{cyc5perm}
\end{figure}

Goddyn has made a conjecture on the structure of permutation graphs. A cycle $C$ in a permutation graph $G$, with a given defining 2-factor, is said to be \emph{removable} if the graph obtained from $G$ by deleting all edges which belong to both $C$ and the induced 2-factor is 2-connected. Goddyn conjectured the following (see \cite{GHM}):
\begin{conjecture}[Goddyn \cite{GHM}]\label{conj:godd}
	The only permutation graph with no removable cycles for any defining 2-factor is the Petersen graph.
\end{conjecture}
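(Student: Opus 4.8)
Since Conjecture~\ref{conj:godd} is, as far as we know, open, the realistic goal is a decisive test on the graphs we have generated rather than a general proof, and the plan is to combine a small amount of structure theory with an exhaustive search. The first step is to restrict attention to snarks. A cubic permutation graph is automatically bridgeless and even $2$-connected: in a defining $2$-factor both induced cycles are spanning, so a bridge or a cut vertex would have to be ``crossed'' by each of $C_1$ and $C_2$ separately, which is impossible for a single edge or a single vertex of degree $3$; moreover, since neither $C_1$ nor $C_2$ has a chord, the complementary perfect matching $M$ runs entirely between them, forcing $|C_1|=|C_2|$. I would then try to show that a \emph{colourable} permutation graph always has a removable cycle for some defining $2$-factor --- e.g.\ by taking a cycle $C$ that uses exactly two matching edges and short arcs of $C_1$ and $C_2$, deleting those arcs, and checking that the resulting union of two spanning paths of $C_1$, $C_2$ joined by $M$ can be made $2$-connected by a suitable choice of the arcs --- so that only permutation snarks remain to be examined.

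The second step is the per-graph procedure. For a permutation snark $G$ I would enumerate its defining $2$-factors by running over perfect matchings $M$ and keeping those for which $G-M$ has exactly two components, each chordless; for each resulting triple $(C_1,C_2,M)$ and each cycle $C$ of $G$ one deletes the edges of $C$ lying in $E(C_1)\cup E(C_2)$ and tests whether what remains is $2$-connected. Because we only need existence of one removable cycle over all defining $2$-factors, the search for a given $G$ can stop at the first success, and one expects the short cycles through two matching edges to work in almost all cases, so this is cheap in practice. Running it over all permutation snarks of order at most $36$ (Table~\ref{numperm}) then either verifies Conjecture~\ref{conj:godd} in this range or produces a counterexample; in the latter case the twelve cyclically $5$-edge-connected permutation snarks on $34$ vertices found above, which we shall see in Section~\ref{s5} have several other exceptional properties, are the first candidates to inspect.

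The main obstacle is intrinsic: no finite computation can settle the conjecture, which speaks about \emph{all} cubic permutation graphs, including infinitely many colourable ones and snarks of arbitrarily large order, so even a clean reduction to snarks leaves an infinite family open. Within the computational range the two delicate points are making the colourable-case reduction rigorous --- in particular handling permutation graphs that are not $3$-connected, where the ladder-like graph left after deleting arcs can fail to be $2$-connected for every obvious choice --- and keeping the per-graph search under control, since a permutation snark can have many defining $2$-factors and the verdict ``no removable cycle'' requires, in principle, exhausting all of them.
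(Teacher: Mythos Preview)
The statement is a conjecture, and the paper does not prove it; it only reports a computational verification (Observation~\ref{obs:rem}) for permutation snarks on $n\le 36$ vertices. Your proposal correctly recognises this and lays out essentially the same plan: reduce to uncolourable graphs, then exhaustively search the permutation snarks for removable cycles. So at the level of strategy you and the paper agree.

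Two points of divergence are worth noting. First, you attempt to argue the colourable reduction yourself, but your sketch is not a proof and you flag it as a ``delicate point''; the paper simply cites \cite{AGZ} for the fact that a counterexample must be uncolourable, and adds the observation that a $4$-cycle forces hamiltonicity (hence colourability), so the search really is over permutation snarks. You would save yourself work by invoking that reference rather than trying to reinvent the argument. Second, your intuition about where a counterexample might live is exactly backwards: you single out the twelve cyclically $5$-edge-connected permutation snarks on $34$ vertices as ``first candidates to inspect'', but the paper's data (Observation~\ref{obs:rem}) shows that these twelve graphs have the \emph{largest} number of removable cycles among permutation snarks on $34$ vertices, so they are the furthest thing from counterexamples in this range.
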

It is known that a counterexample to the conjecture must be
uncolourable, \cite{AGZ}, and cannot have 4-cycles, since that would
make them hamiltonian and hence colourable.  We have tested the small
permutation snarks for the presence of removable cycles.
\begin{observation}\label{obs:rem}
		The Petersen graph is the only permutation snark on $n\leq 36$ vertices with no removable cycles.  The minimum number of removable cycles in a permutation snark  on $n$ vertices is 2 for $n=18$, 2  for $n=26$ and 3 for $n=34$. The permutation graphs on $n=34$ vertices with the largest number of removable cycles are the 12 graphs in  Observation \ref{obs:perm12}.
\end{observation}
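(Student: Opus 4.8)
The plan is to settle the statement by an exhaustive check over the complete lists of snarks of order $n\le 36$ produced by the generation algorithm of Section~3 (Table~\ref{sfig1}), restricted to those that are permutation snarks. First I would run, for each snark $G$ in these lists, a routine that searches for a $2$-factor $F$ consisting of two induced cycles; a snark is kept precisely when at least one such $F$ exists, and the search is required to return \emph{all} such $F$ up to the natural symmetry, since the statement quantifies over every defining $2$-factor. This step recovers exactly the permutation snarks counted in Table~\ref{numperm}, and in particular confirms that the only candidates below $36$ vertices occur at $n=10,18,26,34$ (together with the order-$34$ examples of Observation~\ref{obs:perm12}).

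Next, for each permutation snark $G$ and each defining $2$-factor $F=C_1\cup C_2$, I would enumerate the cycles $C$ of $G$ and, for each, form the graph $G'$ obtained by deleting the edge set $E(C)\cap E(F)$ and test whether $G'$ is $2$-connected (equivalently, bridgeless and free of cut vertices). A cycle passing the test is recorded as removable for this particular $F$; one then tallies, for each $(G,F)$, the number of removable cycles. The three assertions are read off from these tallies: (i) the Petersen graph is the unique $G$ for which, for \emph{every} defining $F$, no removable cycle exists; (ii) the minimum over permutation snarks of the removable-cycle count is $2$ at $n=18$, $2$ at $n=26$, and $3$ at $n=34$; and (iii) among the order-$34$ permutation snarks the maximum removable-cycle count is attained exactly by the twelve cyclically $5$-edge-connected examples of Observation~\ref{obs:perm12}. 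As elsewhere in the paper, the computation would be carried out by two independently written programs on separate machines in Belgium and Sweden, and the outputs compared for agreement.

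The only genuine subtlety, rather than an obstacle, is the quantifier over defining $2$-factors: a permutation snark can admit several non-equivalent $2$-factors into two induced cycles, so the uniqueness claim in~(i) must check that removability fails for \emph{all} of them, not merely for one, and the count in~(ii)–(iii) must be interpreted consistently (I would report it per defining $2$-factor and verify it is in fact well defined for the relevant extremal graphs). Beyond that, the cycle enumeration and the $2$-connectivity tests are entirely routine for graphs of this size, so once the lists of Table~\ref{sfig1} are available and the permutation-snark filter is in place, the verification is direct; the known structural restrictions (a counterexample to Goddyn's conjecture must be uncolourable~\cite{AGZ} and triangle- and square-free) serve only as a sanity check that no non-permutation-snark cases need to be revisited.
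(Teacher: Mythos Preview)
Your proposal is correct and matches the paper's approach: the observation is a computational result obtained by exhaustively filtering the generated snarks for permutation snarks and then testing each for removable cycles, exactly as you outline. The paper gives no further argument beyond stating that the small permutation snarks were tested, so your plan (including the attention to enumerating all defining $2$-factors) is precisely what is needed.
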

	
\subsection{Circumference and long  cycles}\label{sec:circum}

Obviously a snark must have circumference at most $n-1$ and in fact every snark of order $n \leq 26$ has circumference exactly $(n-1)$. The number of snarks with circumference $(n-2)$ can be found in Table \ref{tabcirc}. No snarks of order $n\leq 36$ have circumference less than $n-2$. 
		  
A \emph{dominating} cycle is a cycle $C$ such that every edge in $G$ has an endpoint on $C$.  In \cite{MR988642} Fleischner posed the following conjecture.
\begin{conjecture}[Fleischner \cite{MR988642}]\label{conj:doms}
	Every weak snark has a dominating cycle. 
 \end{conjecture}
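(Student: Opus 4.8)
The plan is not to settle Conjecture~\ref{conj:doms} in general---it is a well-known open problem---but to verify it for every weak snark covered by our lists, that is, for all weak snarks on at most $34$ vertices. The first step is to recast the property so that it can be checked by machine. In a cubic graph $G$ a cycle $C$ is dominating exactly when $V(G)\setminus V(C)$ is an independent set, since an edge misses $C$ if and only if both of its ends lie outside $C$. Counting edge-ends on $C$ already gives a useful size restriction: if $C$ is dominating and $k=|V(C)|$, then each of the $n-k$ vertices outside $C$ sends all three of its edges into $C$, while each vertex of $C$ carries exactly one edge off the cycle, so $k\ge 3(n-k)$, i.e.\ $k\ge 3n/4$ and $|V(G)\setminus V(C)|\le n/4$. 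Hence the complement of a dominating cycle is a small independent set, which is enough structure to drive a backtracking search.

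The cleanest route, however, goes through the classical characterisation, due to Harary and Nash-Williams, of the graphs whose line graph is hamiltonian: for $G$ with at least three edges, $L(G)$ is hamiltonian if and only if $G$ has a dominating closed trail. In a cubic graph every vertex meets a closed trail in an even number of its edges, hence in $0$ or $2$ of them, so the edge set of such a trail is a single cycle; a dominating closed trail in a cubic graph is therefore precisely a dominating cycle. Consequently, for each weak snark $G$ on at most $34$ vertices, deciding whether $G$ has a dominating cycle amounts to deciding whether the $4$-regular graph $L(G)$, on $\tfrac{3}{2}n\le 51$ vertices, is hamiltonian. I would run a hamiltonicity test on each $L(G)$; alternatively one runs a direct backtracking search for a dominating cycle in $G$ itself, pruned by the bound $|V(G)\setminus V(C)|\le\lfloor n/4\rfloor$ and by the requirement that the vertices left off the cycle stay independent. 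Either way the test slots into the same verification framework used for the other properties in this section and is run independently by the two groups.

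The main obstacle is conceptual rather than computational: the argument yields only a finite verification, not a proof, and nothing here excludes a large weak-snark counterexample---indeed Conjecture~\ref{conj:doms} is among the conjectures that survive all our tests, so the only conclusion available is that a smallest counterexample, should one exist, has more than $34$ vertices. The secondary difficulty is scale, since by Table~\ref{sfig1} there are more than $2.5\times 10^{7}$ weak snarks on $34$ vertices, but hamiltonicity of a graph on some fifty vertices is decided quickly in practice and the bound $k\ge 3n/4$ keeps the direct cycle search shallow, so the computation is well within reach; the same check also goes through for all snarks on $36$ vertices, whose list is likewise complete.
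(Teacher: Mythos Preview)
Your proposal is correct and matches the paper's approach: the statement is an open conjecture, and the paper likewise offers no proof but only a computational verification for all weak snarks on $n\le 34$ vertices (and all snarks on $36$ vertices), exactly as you outline. The paper gives no details on the algorithm it used; your reduction via the Harary--Nash-Williams theorem, together with the observation that a closed trail in a cubic graph is necessarily a cycle, is a clean and correct route to the same verification, and the direct backtracking alternative with the bound $|V(G)\setminus V(C)|\le\lfloor n/4\rfloor$ is equally valid.
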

We have verified this conjecture for all snarks on 36 vertices and all weak snarks on 34 vertices and less. 
\begin{observation}
	Conjecture \ref{conj:doms} has no counterexample on $n\leq 34$ vertices.
\end{observation}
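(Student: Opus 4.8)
\textit{Proof idea.} The statement is verified computationally, using the complete lists of weak snarks on $n\le 34$ vertices produced by our generation algorithm (and independently re-checked, as described in Section~3). The first step is a convenient reformulation: for a fixed cubic graph $G$ and a cycle $C$, the set $I=V(G)\setminus V(C)$ fails to witness that $C$ is dominating exactly when some edge has both endpoints in $I$; thus $C$ is a dominating cycle if and only if $I$ is independent. Consequently $G$ has a dominating cycle if and only if there is an independent set $I\subseteq V(G)$ for which the induced subgraph $G-I$ has a Hamiltonian cycle, and that Hamiltonian cycle is then automatically a dominating cycle of $G$. Since a weak snark is not Hamiltonian, such an $I$ must be non-empty.

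The second ingredient is a size bound that keeps the per-graph search small. If $C$ is a dominating cycle of the cubic graph $G$ on $n$ vertices and $k=|I|$, then $C$ has $n-k$ edges, there are exactly $3k$ edges joining $I$ to $V(C)$ (all edges at $I$, since $I$ is independent), and, writing $\ell\ge 0$ for the number of chords of $C$, these three edge sets are disjoint and exhaust $E(G)$, so $(n-k)+3k+\ell=3n/2$. Hence $2k\le n/2$, i.e.\ $k\le n/4$, so for $n\le 34$ one only ever has to look for off-cycle sets $I$ with $1\le|I|\le 8$.

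The verification then iterates over all weak snarks $G$ with $n\le 34$ (their number, more than $25$ million for $n=34$, is given in Table~\ref{sfig1}): for each $G$ it enumerates the independent sets $I$ of size at most $\lfloor n/4\rfloor$ by incremental backtracking (pruning as soon as a partial set ceases to be independent), and for each candidate $I$ runs a standard backtracking Hamiltonicity test on $G-I$, stopping as soon as one succeeds; a graph for which no choice of $I$ yields a Hamiltonian $G-I$ would be a counterexample to Conjecture~\ref{conj:doms}. An essentially equivalent implementation searches for the dominating cycle directly by a depth-first cycle search, marking the non-cycle vertices and pruning whenever two marked vertices become adjacent (so that the marked set stays independent); this is the form in which the test was actually run.

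The only real obstacle here is scale rather than mathematics: the number of graphs to be processed is large, so the inner routine must be fast and the run organised so that it can be split over several machines. As with the other properties in this section, we guard against programming errors by using two independently written programs, executed on separate hardware by the two groups, and by cross-checking the extremal instances — in particular the graphs that require the largest off-cycle set $I$ — against a direct cycle-enumeration verifier. All of these runs agree that there is no counterexample on $n\le 34$ vertices.
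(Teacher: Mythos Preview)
Your proposal is correct and follows the same approach as the paper, namely exhaustive computational verification over the full lists of weak snarks on $n\le 34$ vertices. The paper itself gives no algorithmic details beyond stating that the check was carried out (with the usual double implementation); your reformulation via independent off-cycle sets $I$ with $G-I$ Hamiltonian, and the edge-count bound $|I|\le n/4$, are correct and make explicit a reasonable way to organise the search that the paper leaves implicit.
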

It is not  obvious that the uncolourability matters for dominating cycles and there is a similar conjecture for general cubic well connected graphs:
\begin{conjecture}[Fleischner, Jackson, Ash \cite{MR988642}]\label{d4cs}
  	Every cubic cyclically 4-edge-connected graph has a dominating cycle. 
\end{conjecture}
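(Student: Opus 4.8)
This is the dominating cycle conjecture for cyclically $4$-edge-connected cubic graphs (Fleischner--Jackson--Ash), one of the best-known open problems in the area; it is equivalent to Thomassen's conjecture that every $4$-connected line graph is hamiltonian and, via Ryj\'{a}\v{c}ek's line-graph closure, to the Matthews--Sumner conjecture that every $4$-connected claw-free graph is hamiltonian. A genuine proof is therefore out of reach, and the realistic aim---in the spirit of the preceding paragraphs---is to confirm that there is no small counterexample.

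The plan is as follows. First note that the hypothesis is exactly ``cubic and cyclically $4$-edge-connected'', with no girth or colourability restriction, so, unlike the cycle double cover conjecture, this statement does not reduce to snarks: colourable cyclically $4$-edge-connected cubic graphs, and those of girth $3$ or $4$, must all be examined, so verifying the weak-snark case (Conjecture~\ref{conj:doms}) is only a partial check. I would therefore modify the generator underlying \textit{snarkhunter}: keep the generation of all cubic graphs from \cite{tricycle}, but switch off the colourability look-ahead of Theorems~\ref{theorem_add_edge_cycle} and~\ref{theorem_square} and the girth-$\geq 5$ requirement, instead filtering the generated graphs for cyclic $4$-edge-connectivity and treating girth $3$, $4$ and $\geq 5$ separately, since triangles are built by bundled operations. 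For each surviving graph I would first check whether it is hamiltonian---a hamiltonian cycle is a dominating cycle, and the vast majority of small cubic graphs are hamiltonian---and run a direct search for a dominating cycle (equivalently, a cycle $C$ with $V(G)\setminus V(C)$ an independent set) only on the comparatively rare non-hamiltonian ones; this search is cheap for the orders in range even though the decision problem is NP-complete in general.

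The main obstacle is twofold. On the theoretical side the conjecture is simply among the hardest open problems here, and I would not expect to settle it. On the computational side the obstacle is precisely the phenomenon that motivated the new algorithm in the first place: once the snark restriction is dropped the number of cyclically $4$-edge-connected cubic graphs grows far faster than the number of snarks---the colourable ones dominate overwhelmingly and there is no analogous look-ahead available to prune them---so exhaustive verification is confined to orders well below the $n = 36$ reached for snarks. Known partial results, together with the weak-snark subcase already checked above, trim the search somewhat, but the growth rate still sets a hard ceiling far short of anything resembling a proof.
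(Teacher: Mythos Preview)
Your assessment is correct and matches the paper exactly: this is a well-known open conjecture, the paper offers no proof, and its treatment consists precisely of computational verification---snarks up to $n=36$, weak snarks up to $n=34$, and general cyclically $4$-edge-connected cubic graphs up to $n=26$---with the last bound confirming your prediction that dropping the snark restriction forces a much lower ceiling. Your remarks on the equivalence with Thomassen's and Matthews--Sumner's conjectures, on the non-reducibility to snarks, and on using hamiltonicity as a first filter are all apt, and indeed go somewhat beyond what the paper itself says.
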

In \cite{FK02} it was shown that Conjecture \ref{d4cs} is equivalent to the following, at a first look stronger, conjecture: 
\begin{conjecture}[Fleischner, Kochol \cite{FK02}]\label{d4cs2}
  	Given two edges $e_1$ and $e_2$  in a cubic cyclically 4-edge-connected graph $G$, there exists a  dominating cycle $C$ in $G$, which contains both $e_1$ and $e_2$. 
\end{conjecture}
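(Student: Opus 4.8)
\section*{Proof proposal for Conjecture~\ref{d4cs2}}

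The plan is to prove Conjecture~\ref{d4cs2} by induction on the number of vertices of $G$, exploiting the elementary fact that a cycle $C$ in a cubic graph is dominating precisely when $V(G)\setminus V(C)$ is an independent set; equivalently, every vertex off $C$ has all three of its neighbours on $C$. So the goal is to produce a cycle through the two prescribed edges $e_1,e_2$ whose vertex-complement is independent. First I would dispose of the small cases: for each cyclically $4$-edge-connected cubic graph of small order ($K_4$, $K_{3,3}$, the cube, the Petersen graph, and the next few orders) one checks directly that for every ordered pair of edges a dominating cycle through both exists. These form the base of the induction.

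For the inductive step I would use the inverse of the edge-insertion operation of Figure~\ref{fig:cubeop}(d): given a suitable edge $pq$ with $p$ adjacent to $a,b$ and $q$ adjacent to $c,d$, deleting $p,q$ and inserting the edges $ab$ and $cd$ yields a smaller cubic graph $G'$. I would choose the reduction so that $G'$ remains cyclically $4$-edge-connected and so that natural images $e_1',e_2'$ of the prescribed edges survive in $G'$; if $e_1$ or $e_2$ is incident with $p$ or $q$, its image would be the corresponding edge $ab$ or $cd$. Applying the induction hypothesis to $G'$, $e_1'$, $e_2'$ then yields a dominating cycle $C'$ of $G'$ through $e_1'$ and $e_2'$. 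Finally I would lift $C'$ back to $G$ by reinserting $p$ and $q$ and rerouting $C'$ locally through them, arranging that the lifted cycle $C$ still passes through $e_1$ and $e_2$ and that the two reinserted vertices together with their neighbours remain dominated.

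The hard part is the simultaneous satisfaction of three competing requirements at the reduction step: there is no a priori guarantee that a cyclically $4$-edge-connected cubic graph admits \emph{any} inverse edge-insertion that at once (i) preserves cyclic $4$-edge-connectivity, (ii) keeps both prescribed edges available, and (iii) admits a lift preserving the domination property near the reinserted pair $p,q$. When $e_1$ and $e_2$ are close together, or lie inside a dense local configuration, the freedom to choose the reduction is severely constrained, and a dominating cycle of $G'$ need not lift to one passing through both $e_1$ and $e_2$. This obstruction is essentially why the statement is still only a conjecture: by the result of Fleischner and Kochol~\cite{FK02} quoted above, Conjecture~\ref{d4cs2} is equivalent to the a priori weaker Conjecture~\ref{d4cs}, so no genuinely easier direct argument is to be expected, and any complete proof would have to control the domination property and the two prescribed edges through every reduction obstruction simultaneously.
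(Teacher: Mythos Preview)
The statement you are asked to prove is labelled a \emph{conjecture} in the paper, and the paper does not prove it. The paper merely records that Fleischner and Kochol~\cite{FK02} showed Conjecture~\ref{d4cs2} to be equivalent to Conjecture~\ref{d4cs}, and then reports a computer verification for snarks on $n\le 36$ vertices, weak snarks on $n\le 34$ vertices, and all cyclically $4$-edge-connected cubic graphs on $n\le 26$ vertices. There is therefore no proof in the paper against which to compare your attempt.

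Your proposal is not a proof either, and you essentially say so yourself. The inductive scheme you outline fails at exactly the point you identify: there is no guarantee that a cyclically $4$-edge-connected cubic graph admits an inverse edge-insertion that simultaneously (i) keeps the smaller graph cyclically $4$-edge-connected, (ii) preserves usable images of the two prescribed edges, and (iii) allows the dominating cycle of the reduced graph to be lifted to a dominating cycle of the original one through both edges. Without a mechanism to handle all three constraints at once---and none is known---the induction does not close. So your write-up is an honest description of an approach and its obstruction, but it is not a proof of Conjecture~\ref{d4cs2}; the conjecture remains open.
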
 
Even though the conjectures are equivalent, the smallest counterexamples to the different forms may have different sizes, so we have made independent tests for both forms. 
\begin{observation}
	There are no counterexamples to Conjectures \ref{d4cs} and \ref{d4cs2} among the snarks on $n\leq 36$ vertices, the weak snarks on $n \leq 34$ vertices and the general cubic cyclically 4-edge-connected graphs on $n \leq 26$ vertices.
	
\end{observation}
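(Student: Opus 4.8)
The statement asserts the absence of counterexamples in three explicit finite families of graphs, so the plan is to verify it exhaustively by computer; below I sketch how I would set up and run that verification and why it is conclusive.

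\textbf{Assembling the inputs.} The snarks on $n\le 36$ vertices and the weak snarks on $n\le 34$ vertices are exactly the lists produced by \emph{snarkhunter} in Section~3, so these are already available. For the general cubic cyclically $4$-edge-connected graphs on $n\le 26$ vertices I would generate, for each $n\le 26$, all connected cubic graphs of order $n$ with the generator of~\cite{tricycle} --- run without the colourability look-ahead of Section~3, which would wrongly discard the colourable members of this family --- and keep those $G$ with $\lambda_c(G)\ge 4$. Deciding the cyclic edge-connectivity of a cubic graph is inexpensive, since it suffices to check for a cycle-separating edge cut of size at most $3$.

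\textbf{Testing Conjecture~\ref{d4cs}.} For each graph $G$ in each family I would run a backtracking search for a dominating cycle, i.e.\ a cycle $C$ with $V(G)\setminus V(C)$ independent: grow $C$ one edge at a time, maintaining the invariant that a vertex is never ``skipped'' once two of its neighbours have already been skipped, and, dually, that once a vertex is skipped the two edges at each of its neighbours not leading to it are forced into $C$. On cubic graphs of these orders this terminates almost instantly. As an independent check one may use that in a cubic graph every closed trail uses exactly two edges at each vertex it visits and is therefore a cycle; hence, by the Harary--Nash-Williams theorem, $G$ has a dominating cycle if and only if its line graph $L(G)$ is Hamiltonian.

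\textbf{Testing Conjecture~\ref{d4cs2}.} Verifying \ref{d4cs2} on a complete family automatically verifies \ref{d4cs} on it, so in principle only this step is needed, but I would still run the cheaper test above separately as a sanity check (and because, as noted above, the smallest counterexamples to the two forms may a priori have different orders). For each $G$ and each edge $e_1$ I would enumerate dominating cycles of $G$ through $e_1$, accumulating the union of their edge sets and stopping as soon as this union equals $E(G)$: if that happens for every choice of $e_1$ then every pair $\{e_1,e_2\}$ lies on a common dominating cycle, whereas if the enumeration for some $e_1$ is exhausted before covering $E(G)$ then an uncovered edge $e_2$ is a counterexample. Equivalently, and more crudely, one loops over all $\binom{|E(G)|}{2}$ edge pairs and runs the search of the previous paragraph started at $e_1$ and constrained to pass through $e_2$.

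\textbf{Why it is conclusive, and the main obstacle.} There is no delicate mathematics here: since we certify an \emph{absence} of counterexamples, soundness rests only on (i) the completeness of the three generated families, which is guaranteed by the exhaustiveness proofs of the underlying generation algorithms, and (ii) the correctness of a standard cycle-search backtracking routine. The genuine difficulty is running time: the edge-pair search over all cyclically $4$-edge-connected cubic graphs is the expensive ingredient and is what forces that family to stop at $n\le 26$, whereas uncolourable cyclically $4$-edge-connected cubic graphs are so rare that the snark and weak-snark ranges reach considerably further. As with the other computations in this section, both groups would implement the generation and the two searches independently and run them on separate machines, with complete agreement for $n\le 30$ and one group completing the larger orders; this agreement is the evidence supporting the observation.
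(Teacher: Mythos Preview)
Your proposal is correct and matches the paper's approach: this observation is a purely computational result, and the paper offers no proof beyond the blanket statement (at the start of Section~4) that all properties were tested by independently written programs run by the two groups, with full agreement for $n\le 30$. Your description of the backtracking search, the separate tests for the two equivalent forms, and the independent verification protocol is exactly in this spirit; the paper simply does not spell out the algorithmic details you provide.
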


\subsection{The structure of 2-factors}\label{sec:2fac}
The oddness of a cubic graph provides a measure for how far the
graph is from being colourable, as a graph has oddness 0 if and only
if it is colourable. The oddness has been used in the study of cycle
double covers in \cite{MR1328296,MR1815603,MR2117938,markstrom10},
where it was shown that cubic graphs with oddness at most 4 have
various types of cycle double covers.

Among the small snarks the oddness is very low.
\begin{observation}
	All snarks on $n\leq 36$ vertices have oddness 2. 
\end{observation}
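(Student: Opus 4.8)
The plan is to combine a short unconditional parity argument, which gives the lower bound that the oddness is at least $2$, with a computer search over the generated lists, which certifies that the oddness is at most $2$ for every snark on at most $36$ vertices.

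First, for the lower bound, I would note that a snark is a cubic graph and hence has an even number of vertices $n$. Any $2$-factor $F$ of $G$ partitions $V(G)$ into vertex-disjoint cycles whose lengths sum to $n$; since $n$ is even, the number of odd-length cycles in $F$ is even. Taking the minimum over all $2$-factors, the oddness of any bridgeless cubic graph is therefore even. By Lemma~\ref{theorem_2-factor} a cubic graph has oddness $0$ if and only if it is colourable, and snarks are by definition uncolourable; hence every snark has oddness at least $2$.

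Second, for the upper bound it suffices to exhibit, for each snark $G$ on $n \leq 36$ vertices, a single $2$-factor of $G$ with exactly two odd cycles: combined with the lower bound this forces the oddness of $G$ to be exactly $2$. I would run over the complete lists of snarks produced by \textit{snarkhunter} (all non-isomorphic snarks on $n \leq 36$ vertices, whose numbers are recorded in Table~\ref{sfig1}) and, for each graph, search its $2$-factors for one with the minimum number of odd components. In every case such a search returns a $2$-factor with precisely two odd cycles. As with the other properties treated in Section~\ref{sec:2fac}, the computation would be carried out by two independently written programs on separate machines, and the oddness computation simultaneously serves as an independent re-verification that the generated graphs are uncolourable, since positive oddness is equivalent to uncolourability.

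The only real difficulty here is computational rather than conceptual: one must be confident that the lists of snarks are complete and correct and that the $2$-factor search is implemented correctly. Completeness and correctness of the lists are supported by the independent \textit{minibaum} cross-check up to $32$ vertices and by the independent re-checks of the chromatic index and of pairwise non-isomorphism described earlier; the $2$-factor search itself is cheap, since one does not need to minimise exhaustively over all $2$-factors — finding any $2$-factor with two odd cycles already suffices in view of the parity lower bound. Because the lower bound is an unconditional theorem and the upper bound is witnessed by explicit $2$-factors, the verification stays well within reach even for the roughly $6\times10^{7}$ snarks on $36$ vertices.
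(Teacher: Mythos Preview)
Your proposal is correct and matches the paper's approach: the observation is a computational result obtained by computing the oddness of every snark in the generated lists (as noted in the testing section, this computation also served as a third independent check of uncolourability), together with the standard parity argument and Lemma~\ref{theorem_2-factor} giving the lower bound of~$2$. The paper does not spell out these details, so your write-up is simply a more explicit rendering of the same verification.
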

We do not know the smallest snark of oddness greater than 2 and pose the following problem:
\begin{problem}
	Which is the smallest snark of oddness strictly greater than 2?
\end{problem}
By construction we know that there is a snark with oddness 4 on 46 vertices, \cite{Hagglund11:1}, but it seems unlikely that this is the smallest snark with that property. 

In \cite{ALS} Abreu et al. made the following conjecture:
\begin{sconjecture}[Abreu, Labbate, Sheehan \cite{ALS} ]\label{conj:ALS}
	The only snarks where all 2-factors consist of only odd cycles are the Petersen graph, the Blan\v usa-2 snark  and the Flower snarks. 
\end{sconjecture} 
This conjecture is false. 
\begin{observation}
	The smallest counterexample to Conjecture \ref{conj:ALS} has  26 vertices and is given in Appendix \ref{app:ALS}.
There is one additional counterexample on 34 vertices. The counterexamples can be obtained from \textit{House of Graphs}~\cite{HOG} by searching for the keywords ``counterexample * abreu''.
\end{observation}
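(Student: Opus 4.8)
The claim is a finite verification over the complete lists of snarks of order at most $36$ produced in Section~3, so the plan is to recast the defining property of a counterexample as a test that can be applied to each snark individually and then to run this test over all of them. Note first that the conjecture concerns only snarks, so the lists of snarks (and not the weak snarks) suffice. The first step is to reformulate the property via perfect matchings: a $2$-factor of a cubic graph is exactly the complement of a perfect matching, and every snark, being cyclically $4$-edge-connected and hence bridgeless, has a perfect matching. For a fixed snark $G$ I would enumerate all perfect matchings $M$, and for each split the $2$-regular spanning subgraph $G-M$ into its cyclic components and record their lengths. Then $G$ is a candidate counterexample to Conjecture~\ref{conj:ALS} precisely when every such $G-M$ has all components of odd length; by Lemma~\ref{theorem_2-factor} any snark already has some $2$-factor with an odd component, so what actually has to be checked is the stronger statement that \emph{no} $2$-factor contains an even cycle.

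With this per-graph test in hand, the remaining steps are bookkeeping. First, collect the snarks on $n\le 36$ vertices that pass the test. Second, remove from this collection the graphs named in the conjecture: the Petersen graph ($n=10$), the Blan\v usa-2 snark ($n=18$), and the flower snarks, which have order $4k$ with $k$ odd and so contribute only $n=20$ in the relevant range. Third, read off the outcome: the smallest order remaining is $26$, with a single such snark, namely the one displayed in Appendix~\ref{app:ALS}; there are no surviving snarks of order $28$, $30$ or $32$; exactly one further snark survives at $34$; and none at $36$. As with the other properties in this section, the test would be implemented independently by the groups in Belgium and Sweden and run on different hardware, so that agreement of the results cross-checks both the code and the underlying data; agreement with the counts in Table~\ref{sfig1} is an additional sanity check.

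The main obstacle will be computational rather than conceptual. For the larger orders a snark can possess a large number of perfect matchings, so the $2$-factor enumeration has to be organised with care — for instance by a branching search that at each step fixes a matching edge at a currently uncovered vertex of lowest degree in the residual graph, prunes branches that cannot be completed, and aborts a graph as soon as a single $2$-factor with an even cycle is found (which for non-counterexamples is typically almost immediate). A smaller but genuine point is making sure the conjectured exceptional family is correctly recognised up to isomorphism within the generated lists and excluded, so that the reported counterexamples are genuinely new; since the orders involved are small, this is routine. Ultimately the argument rests on the completeness of the snark lists from Section~3, which has itself been checked against \emph{minibaum} and by independent regeneration.
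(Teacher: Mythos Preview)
Your approach is exactly the intended one: this observation has no proof beyond an exhaustive computer check over the complete snark lists, and your reformulation via perfect-matching complements together with an early-abort search is the right way to organise that check. The independent-implementation protocol you describe is also what the paper does.

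There is, however, a factual slip in your exclusion step that would derail the verification if you carried it out as written. The flower snarks $J_k$ have $4k$ vertices for odd $k\ge 5$, so within the range $n\le 36$ they occur at $n=20,28,36$, not only at $n=20$. Since these graphs do have the all-odd-$2$-factor property (that is precisely why they appear in the statement of Conjecture~\ref{conj:ALS}), your step~1 will flag them, and unless you also exclude $J_7$ and $J_9$ in step~2 you would wrongly report them as new counterexamples at $28$ and $36$. With that correction your bookkeeping yields exactly the outcome stated in the observation.
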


The methods used to study cycle double covers with a given oddness can
also be applied to graphs with 2-factors with a small number of
components. With this in mind we have also investigated the smallest
and largest numbers of components for the small snarks. In Tables
\ref{tabmaxodd} and \ref{tabmaxcomp} the number of graphs with a given
maximum number of components and odd components in any 2-factor are
given.
\begin{table}
			\begin{tabular}{l|| r| r| r| r| r| }
				$n$ & 2 & 4 & 6 \\
				\hline
				10 &1&&\\
				18 & 2  & & \\
				20 & 6  & & \\
				22 & 20  & & \\
				24 & 34 & 4  & \\
				26 & 179&101  & \\
				28 & 1395& 1505  & \\
				30 & 10 039 &18 360 &  \\
				32 & 68 811& 224 164&84 \\
				34 &509 106 & 3 319 006& 5475 \\
				36 & 3 752 134 & 56 201 769 & 213 829 \\
			\end{tabular}
			\caption{The number of snarks with a given maximum number of odd components in any 2-factor.}
			\label{tabmaxodd}
\end{table}	

\begin{table}
			\begin{tabular}{l|| r| r| r| r| r| r| }
				$n$ & 2 & 3 & 4 & 5 & 6 & 7 \\
				\hline
				10 &1&&&&&\\
				18 & 1 & 1 & & & & \\
				20 & 1& 5& & & & \\
				22 & & 19& 1& & & \\
				24 & & 26& 12& & & \\
				26 & 1&81 &189 &9 & & \\
				28 & 1& 385&2306 &208 & & \\
				30 & &1012 &22 943 &4444 & & \\
				32 & & 1543& 204 463& 86 845 &208 & \\
				34 &1 & 2341& 1 957 813& 1 846 249& 27 183& \\
				36 & 1 & 3218 & 19 180 074 & 39 312 112 & 1 671 779 & 548 \\
			\end{tabular}
			\caption{The number of snarks with a given maximum number of components in any 2-factor.}
			\label{tabmaxcomp}
\end{table}

\subsection{Automorphisms}
The Petersen graph is the only known snark with a vertex transitive automorphism group and it is also the smallest example of a vertex transitive graph which is not a Cayley graph. According to the so called Lovasz Conjecture there is only a finite, known, list of non-hamiltonian vertex transitive graphs. The name of the conjecture is a bit of a misnomer since the original problem posed by Lovasz \cite{Lo78} was to construct additional examples of vertex transitive graphs with no hamiltonian path. In stark contrast to this conjecture Babai has conjectured \cite{B95} that for any $\epsilon>0$ there should exist infinitely many vertex transitive graphs on $n$ vertices with no cycle of length greater than  $(1-\epsilon)n$. In connection with snarks the problem has been studied in  \cite{MR1863579}.

With these conflicting conjectures in mind it is natural to investigate the automorphism groups  of the small snarks.  In Table \ref{autom}	 we display the sizes of the automorphism group of all snarks on $n\leq 36$ vertices. As we can see, the Petersen graph has the largest automorphism group among the small snarks, and.we have verified that the Petersen graph is the only vertex transitive snark on $n\leq 36$ vertices.	We also conjecture that non-trivial automorphisms are uncommon among large snarks.
\begin{conjecture}
	The proportion of snarks $G$ on $n$ vertices with nontrivial automorphism group tends to 0 as $n\rightarrow \infty$.
\end{conjecture}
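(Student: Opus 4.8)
The natural plan is to transport the classical fact that almost every cubic graph is asymmetric to the class of snarks, the obvious difficulty being that no uniform random model for snarks is available, so the probabilistic argument must be replaced by an explicit surgery. Write $s_n$ for the number of snarks on $n$ vertices and $a_n$ for the number of them with a nontrivial automorphism, so that the goal is $a_n/s_n\to 0$. The first ingredient is a structural lemma controlling how an automorphism of a snark can act: if $\varphi$ is a nonidentity automorphism of a snark $G$, then the set $M_\varphi$ of vertices moved by $\varphi$ should have size at least $cn$ for an absolute constant $c>0$. The heuristic is that the vertices fixed by $\varphi$ span a subgraph $H$ which $\varphi$ leaves pointwise fixed, all of the ``action'' of $\varphi$ takes place on the complement of $H$, and $H$ is joined to the rest of $G$ only through a cut; using that $G$ is cubic, has girth at least $5$, and is cyclically $4$-edge connected, one argues that $H$ cannot contain almost all of the vertices. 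Pinning down the constant $c$ — via a case analysis of how $\varphi$ can act across the small cyclic cuts separating $H$ from the rest — is already a nontrivial task.

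The second ingredient is the surgery itself. I would fix once and for all a local replacement rule $R$ that takes the radius-$r$ neighbourhood of a single edge and rewires it into a slightly different gadget on the same number of vertices, chosen so that: (i) girth at least $5$ is preserved; (ii) cyclic $4$-edge connectivity is preserved; (iii) uncolourability is preserved, checked, exactly as in the generation algorithm, through the $2$-factor criterion of Lemma~\ref{theorem_2-factor} together with local arguments in the spirit of Lemma~\ref{lem:2factoredge} and Theorem~\ref{theorem_square}; and (iv) the rewiring destroys any symmetry of the ambient configuration while $R$ remains invertible with small fibres. Such an $R$ can plausibly be assembled from small pieces of the Petersen graph together with a compensating local contraction so that the vertex count is unchanged (an order-increasing dot-product-type expansion at the edge would also satisfy (i)--(iii), but keeping $n$ fixed avoids the need to compare $s_{n+t}$ with $s_n$). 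Given a symmetric snark $G$ with automorphism $\varphi$, apply $R$ at one edge $e$ with $e\subseteq M_\varphi$ while leaving the $\varphi$-orbit of $e$ otherwise untouched; this breaks the equivalence between $e$ and $\varphi(e)$, so the resulting snark $G'$ typically has no automorphism. Ranging over the at least $cn$ admissible edges $e$ and over all symmetric snarks $G$ on $n$ vertices produces a map $(G,e)\mapsto G'$ into the set of snarks on $n$ vertices whose fibres have size $O(1)$, whence by a double count $cn\cdot a_n\le C\cdot s_n$ and therefore $a_n/s_n=O(1/n)\to 0$.

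The main obstacle lies in the two places where this sketch quietly assumes quantitative control that we do not possess. First, boundedness of the fibres of $R$ requires that a snark $G'$ on $n$ vertices contains the inserted gadget in only $O(1)$ reversible positions; a priori a large snark could contain $\Omega(n)$ copies of any fixed small configuration, in which case the inequality degrades to the useless $a_n\le C s_n$, so one genuinely needs the gadget to be ``globally rare'' inside snarks, or needs to enrich the rooting so that the surgery site is essentially unique — and proving such a scarcity statement is itself a substantial piece of work. Second, and more seriously, there is at present no analytic handle on $s_n$ at all: no asymptotics and no matching pair of bounds, so every counting estimate above must be established directly from the combinatorics of the surgery rather than read off from a generating function, and the data in Table~\ref{sfig1} suggest that $s_n$ grows faster than any fixed geometric rate, which is exactly what makes an order-increasing surgery dangerous and forces the delicate order-preserving construction of $R$. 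My expectation is that a complete proof along these lines would ultimately need a structural input that is not yet available — something like a decomposition theorem expressing a typical snark as a ``generic'' assembly of bounded blocks — which is precisely why this remains a conjecture rather than a theorem.
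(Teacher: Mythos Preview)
The statement you are addressing is a \emph{conjecture} in the paper, not a theorem: the authors offer no proof whatsoever, only the computational evidence in Table~\ref{autom} showing that the fraction of snarks with trivial automorphism group increases rapidly for $n\le 36$. There is therefore nothing in the paper to compare your argument against, and you yourself correctly conclude in your final paragraph that the sketch does not constitute a proof and that the statement ``remains a conjecture rather than a theorem.'' On that bottom line you and the paper agree.

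That said, one part of your outline is weaker than you indicate. Your ``first ingredient'' --- that every nontrivial automorphism of a snark moves at least $cn$ vertices --- is not merely hard to pin down; it is almost certainly false as stated. Standard snark-building operations such as Isaacs' dot product let one glue a fixed small snark $G_1$ (chosen to carry a nontrivial automorphism $\varphi$ fixing the attachment edges and their endpoints) to an arbitrarily large snark $G_2$; the result is again a snark, and $\varphi$ extends by the identity on the $G_2$ side to an automorphism whose support is contained in $V(G_1)$, hence of bounded size independent of $n$. So the ``structural lemma'' you propose would need to be replaced by something substantially different --- perhaps a statement that snarks admitting such localized symmetries are themselves rare --- and that reformulation already presupposes much of what you are trying to prove. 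This reinforces, rather than contradicts, your overall conclusion that a genuine proof appears to require structural input not currently available.
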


\begin{table}
	\begin{small}
			\begin{tabular}{l|| r| r| r| r| r| r| r| r| r| r| r|r|r|r|r|r|r}
				Order & 1 & 2 	& 3 	& 4 	& 6 	& 8 	& 12 & 16 & 20 & 24	& 28 & 32 & 36 & 48	& 64 & 80 & 120\\
				\hline
				10 & 	& 	& 	& 	& 	& 	& 	& 	& 	& 	& 	&	& 	& 	& 	& 	& 1 \\
				\hline
				18 & 	& 	& 	& 1	& 	& 1	& 	&  	& 	& 	& 	&	& 	& 	&  	&  	&\\
				\hline
				20 & 2	& 1	& 	& 2	& 	& 	& 	& 	& 1	& 	& 	&	& 	& 	& 	&  	& \\
				\hline
				22 & 4	& 11	& 	& 1	& 	& 1	& 2	& 1	& 	& 	& 	&	& 	& 	&  &  	&\\
				\hline
				24 & 21	& 9	& 	& 8	& 	& 	& 	& 	& 	& 	& 	&	&	& 	& 	&  	&\\
				\hline
				26 & 174	& 75	& 	& 23	& 	& 7	& 	& 1	& 	& 	& 	&	& 	& 	& 	&  	&\\
				\hline
				28 & 2536	& 290& 	& 62	& 1	& 6	& 2	& 2	& 	& 	& 1	&	& 	& 	&  	&  	&\\
				\hline
				30 & 26 214& 1924	& 	&226 	& 	&25 	& 	& 9	& 	& 	& 	&	& 	&  	& 	& 1 	&\\
				\hline
				32 & 278 718& 13 284& 	&973 	& 	&78 	& 	& 6	& 	& 	& 	&	& 	& 	&  	&  	&\\
				\hline
				34 & 3 684 637& 143 783	& 7	&4798& 7	&329& 1 	& 20	& 	& 3	& 	&1	& 	& 1	&  	&  	&\\
				\hline
				36 & 58 191 667 & 1 950 129 & 2	& 24 855	& 3	& 1044& 	3& 24& 	& 	& 	&2	& 1	& 1	&  1	&  	&\\
				\hline
			\end{tabular}
			\caption{The numbers of snarks with given orders of their automorphism group.  }\label{autom}
	\end{small}
\end{table}

\subsection{Total chromatic number}
The \emph{total chromatic number} of a graph $G$ is denoted
$\chi''(G)$ and is defined as the minimum number of colours required
to colour $E(G)\cup V(G)$ such that adjacent vertices and edges have
different colours and no vertex has the same colour as its incident
edges. It is known \cite{Ro71} that for cubic graphs $\Delta+1\leq
\chi''(G)\leq \Delta+2$, and in \cite{CMRS} it was studied whether
there are snarks attaining the upper bound. We have extended their
tests:
\begin{observation}
			Every snark of order 36 or less and every weak snark of order 34 or less has total chromatic number 4.
\end{observation}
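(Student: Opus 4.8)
The plan is computational, in the same spirit as the other observations in this section. Every cubic graph $G$ satisfies $\Delta(G)+1 = 4 \le \chi''(G) \le \Delta(G)+2 = 5$, the upper bound being the result of \cite{Ro71}, so $\chi''(G)\in\{4,5\}$ and it suffices to exhibit, for each of the finitely many graphs in question — all snarks on at most $36$ vertices and all weak snarks on at most $34$ vertices, all of which have already been generated — one proper total colouring using only $4$ colours. Producing such a colouring immediately forces $\chi''(G)=4$; no separate lower-bound argument is needed.

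First I would reduce the search space. In a cubic graph the vertex $v$ together with its three incident edges forms a clique of size four in the total graph, so in any total $4$-colouring these four elements receive all of $\{1,2,3,4\}$; hence, once a proper $4$-edge-colouring $\phi$ is fixed, the colour of each vertex $v$ is forced to be the unique colour $\psi(v)$ missing from the edges at $v$. The edge–edge and vertex–edge constraints are then automatically met, and $\chi''(G)=4$ if and only if $G$ admits a proper $4$-edge-colouring $\phi$ whose missing-colour map $\psi$ is a proper vertex colouring, i.e. $\psi(u)\ne\psi(v)$ for every edge $uv$. Note that $\phi(uv)$ is used at both $u$ and $v$, so $\psi(u),\psi(v)$ always lie in the three-element set $\{1,2,3,4\}\setminus\{\phi(uv)\}$; the condition is simply that they be distinct. (In particular, a $3$-edge-colouring does not help: its constant missing-colour map is never proper, so the reduction is a genuine search even though it is driven by edge colourings.)

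Second I would run a backtracking search over proper $4$-edge-colourings, ordering the edges so that the partial missing-colour constraints propagate and prune branches as early as possible; equivalently, one can hand the constraint system on the total graph to a SAT or CSP solver. Since every cubic graph has a proper $4$-edge-colouring by Vizing's theorem and the extra vertex condition is mild, in practice a valid total $4$-colouring is found very quickly for each individual graph, so the method scales to the full generated lists. As with the other properties in this section, the computation would be carried out independently by the groups in Belgium and Sweden with separately written programs, and the results compared; this also extends the tests reported in \cite{CMRS}.

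The main obstacle is not the individual instances but the sheer volume — roughly sixty million snarks on $36$ vertices — so the per-graph routine must run in milliseconds and the whole run must be organised to stream through the generation output. The secondary concern, correctness of the verification, is mitigated by the fact that a claimed total $4$-colouring can be checked in linear time: each program can emit an explicit colouring which is then re-validated by an independent checker, so that one need not trust the search code itself. If, contrary to expectation, some graph resisted the search, one would fall back to a complete proof of non-$4$-colourability of its total graph (exhaustive backtracking or an unsatisfiability certificate), but on these orders no such case is anticipated.
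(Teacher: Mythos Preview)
Your proposal is correct and matches the paper's approach: the observation is a computational result obtained by running a total-colouring test over the full generated lists, with the bounds $4\le\chi''(G)\le 5$ from \cite{Ro71} reducing the task to exhibiting a total $4$-colouring for each graph. The paper gives no algorithmic detail beyond ``we have extended their tests'' (referring to \cite{CMRS}), so your reduction to a proper $4$-edge-colouring whose missing-colour map is a proper vertex colouring is a reasonable and correct implementation choice that the paper neither confirms nor contradicts.
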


Among the colourable cyclically 4-edge-connected graphs there are
examples that have total chromatic number 5. However they are quite
rare as we can see in table \ref{tabletc}.

\begin{table}
	\begin{tabular}{l|| r r r r r r r r r r   }
		\hline
		$n$           & 8  & 10 & 12 & 14 & 16 & 18  & 20 & 22 & 24 & 26  \\ 
				\hline
		 $\chi''=5$& 1  &  3  &   5  &  2  & 3   & 102& 28 & 11 & 12 & 95 \\
		 		\hline
  $\lambda_c\geq4$&  2 &  5  & 18   & 84& 607 & 6100 & 78824 & 1195280 & 20297600 & 376940415
 	\end{tabular}
	\caption{The numbers of cyclically 4-edge-connected cubic graphs and the numbers of such graphs with $\chi''=5$.}
	\label{tabletc}
\end{table}

\subsection{Strong snarks}
Apart from oddness, a number of other measures of how strongly non-colourable a snark is have been investigated, see e.g. \cite{MS10}. One of the first was the notion of strong snarks used by Jaeger in \cite{MR821502}. Following the notation from \cite{MR821502} we say that a snark $G$ is a \emph{strong snark} if for every edge $e$ in $G$, $G*e$ is uncolourable. Here $G*e$ denotes the unique cubic graph such that $G\setminus e$ is a subdivision of $G*e$.  Celmins \cite{celm} proved that a minimum counterexample to the cycle double cover conjecture must be a strong snark.

It is easy to see that if a snark contains a cycle that only misses
one vertex, then the snark cannot be strong. By checking all the
snarks with circumference at most $n-2$ we have found that:
\begin{observation}
	There are 7 strong snarks on 34 vertices and 25 strong snarks on 36 vertices. There are no strong snarks of smaller order. 
\end{observation}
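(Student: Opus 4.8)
The plan is to combine the ``easy to see'' structural observation stated just before the claim with the exhaustive list of snarks produced in Section~3, so that the assertion becomes a finite check over a very short list of graphs. First I would prove the reduction: if a snark $G$ on $n$ vertices has circumference $n-1$, then $G$ is not strong. Indeed, let $C$ be a cycle of $G$ with $V(C)=V(G)\setminus\{v\}$, let $a$ be a neighbour of $v$ lying on $C$, and put $e=va$. Since $e\notin E(C)$, the cycle $C$ survives in $G\setminus e$; in $G\setminus e$ the only vertices of degree $2$ are $v$ and $a$, suppressing $v$ does not touch $C$ (as $v\notin C$), and suppressing $a$ (whose two remaining incident edges both lie on $C$) turns $C$ into a cycle through $V(G)\setminus\{v,a\}=V(G*e)$. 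Girth at least $5$ prevents either suppression from creating a multiple edge (a triangle or $4$-cycle through $v$ would otherwise appear), so $G*e$ is a simple cubic graph on the even number $n-2$ of vertices carrying a Hamiltonian cycle; that Hamiltonian cycle is an even $2$-factor, hence by Lemma~\ref{theorem_2-factor} $G*e$ is colourable, and so $G$ is not strong.

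Next I would use this to cut down the search space. By Section~\ref{sec:circum} every snark on $n\le 26$ vertices has circumference $n-1$, so none of these is strong; and since no snark on $n\le 36$ vertices has circumference below $n-2$, for $28\le n\le 36$ the only possible strong snarks are those of circumference exactly $n-2$. By Table~\ref{tabcirc} these number $3,8,32,104$ and $1143$ for $n=28,30,32,34,36$ respectively, i.e.\ fewer than $1300$ graphs in total. For each graph $G$ on this list and each edge $e\in E(G)$ one forms $G*e$ and decides its colourability, most conveniently via Lemma~\ref{theorem_2-factor} (search for an even $2$-factor) or with a direct edge-colouring routine; by definition $G$ is strong precisely when every such $G*e$ is uncolourable. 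Running this test, independently with the two verification programs used throughout Section~3, yields no strong snark for $n=28,30,32$, exactly $7$ for $n=34$, and exactly $25$ for $n=36$, which is the statement.

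The hard part here is not the mathematics: the only genuine subtlety in the argument is verifying that $G*e$ is always well defined and simple so that ``colourable'' is unambiguous, and this is immediate from girth at least $5$. What the conclusion really rests on is the reliability of the input data, namely the completeness of the \textit{snarkhunter}-generated list of all snarks up to $36$ vertices and the correctness of the circumference figures in Table~\ref{tabcirc}; this is precisely why the enumeration was cross-validated on independent programs and hardware and why the colourability of all generated graphs was checked in several ways. Given those inputs, the remaining computation is short and easily reproduced.
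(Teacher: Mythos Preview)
Your proposal is correct and follows exactly the route taken in the paper: reduce to snarks of circumference at most $n-2$ via the observation that a near-Hamiltonian cycle forces some $G*e$ to be Hamiltonian and hence colourable, then run the finite check over the short list produced by the circumference classification. Your write-up simply spells out the ``easy to see'' reduction in more detail than the paper does; the strategy, the use of Table~\ref{tabcirc}, and the final edge-by-edge colourability test are identical.
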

In Figure \ref{strongsnark} we display the most symmetric strong snark
on 34 vertices. This and the other examples are presented in Appendix
\ref{app:strong}. They can also be obtained from \textit{House of
  Graphs}~\cite{HOG} by searching for the keywords ``strong snark''.

\begin{figure}[h]	
	\includegraphics[scale=0.45]{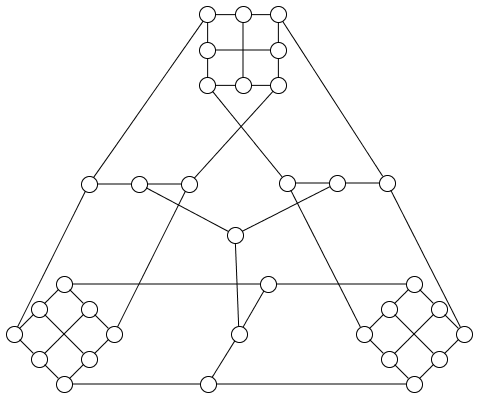}
		\caption{One of the seven strong snarks on 34 vertices.}
	\label{strongsnark}
\end{figure}

\section{Cycle double covers}\label{sec:cdc}
A \emph{cycle double cover} (usually abbreviated CDC) of a graph $G$ is a multiset of cycles such that each edge of $G$ lies in exactly two cycles. The Cycle Double Cover Conjecture is one of the most long-standing open problems in graph theory.
\begin{conjecture}[CDCC, Szekeres \cite{Sze73}, Seymour \cite{Sey79}]
	Every bridgeless graph has a cycle double cover.
	\label{cdcc}
\end{conjecture}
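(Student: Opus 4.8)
\section*{A proof proposal for Conjecture~\ref{cdcc}}

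The plan is to reduce the conjecture, through a sequence of standard steps, to the class of snarks, and then to attempt an induction on the oddness. First, a bridgeless graph may be assumed cubic: suppress every vertex of degree $2$, and split every vertex of degree at least $4$ into lower-degree vertices joined by a new edge, choosing the split (by Fleischner's splitting lemma) so that the graph stays bridgeless; since suppression and edge contraction carry cycles to cycles, a cycle double cover of the smaller graph pulls back, and iterating we may take $G$ to be cubic. Second, among cubic graphs we may assume $G$ is uncolourable: if $G$ is $3$-edge-colourable then, by Lemma~\ref{theorem_2-factor}, the three edge sets obtained by deleting one colour class each are even $2$-factors, and each edge misses exactly one colour and hence lies in exactly two of them, so these three even $2$-factors are themselves a cycle double cover. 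Third, as recalled in the introduction, an uncolourable cubic graph with a cycle-separating $2$- or $3$-edge-cut, or with a $4$-cycle, is built from strictly smaller uncolourable cubic graphs by the standard operations, and one checks that cycle double covers of the pieces glue along the cut; so, inducting on the order, it suffices to prove the conjecture for snarks.

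For snarks the natural idea is to induct on a parameter measuring distance from colourability, and the oddness is the obvious candidate, since oddness $0$ is precisely the colourable case already handled. One would take a $2$-factor $F$ attaining the oddness, pair up its odd cycles, and reroute $F$ along short paths joining paired cycles so as to decrease the number of odd components, while carrying along a compatible partial cover of the rest of the graph; in the favourable case this yields either a cycle double cover outright or a new $2$-factor of strictly smaller oddness together with a set of cycles with which to continue. This is essentially the mechanism behind the known theorem, cited in Section~\ref{sec:2fac}, that cubic graphs of oddness at most $4$ have cycle double covers. A second route sidesteps oddness: by the theorem of Robertson, Sanders, Seymour and Thomas that every cubic graph without a Petersen minor is $3$-edge-colourable, every snark has a Petersen minor, and one would hope to transfer a cycle double cover of the Petersen graph across such a minor, or --- more ambitiously --- to establish Jaeger's Petersen-colouring conjecture, which is stronger than the cycle double cover conjecture and whose verification for small snarks is reported later in the paper.

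The hard part is that neither route actually closes, which is why the cycle double cover conjecture is the long-standing open problem it is here advertised to be. The oddness induction has no footing at the top: there is no known absolute bound on the oddness of a snark --- we do not even know the smallest snark of oddness greater than $2$ --- and the rerouting arguments that succeed for oddness at most $4$ are not known to iterate. The Petersen-minor route fails for a different reason: the available theory of Petersen minors controls colourability-type invariants but not the existence of a cycle double cover, and no transfer principle for cycle double covers across minors is known. I therefore expect that a clean argument reaches only as far as the reduction to snarks together with the small-oddness cases, and the computations reported in this paper --- verifying the conjecture, and several of its strengthenings, for all snarks of order at most $36$ --- should be read as evidence for the conjecture rather than as a step in its proof.
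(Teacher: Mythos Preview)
This statement is a \emph{conjecture}, and the paper does not prove it; it is presented precisely as one of the long-standing open problems motivating the study of snarks. You recognise this explicitly in your final paragraph, so in that sense your write-up is correct: there is no proof to give, and you do not pretend otherwise.

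That said, as a ``proof proposal'' the document is misleadingly framed. The first paragraph is a sound sketch of the standard reduction to snarks (and matches what the paper recalls: it suffices to prove the CDCC for snarks, and colourable cubic graphs have an obvious CDC). But the second paragraph is not a proof strategy with a gap --- it is a wish list. Inducting on oddness has no base or step beyond the known small-oddness results you cite, and the Petersen-minor idea has no transfer mechanism, as you yourself point out. Nothing here goes beyond the background the paper already summarises in Section~\ref{sec:cdc}. A cleaner submission would simply state that Conjecture~\ref{cdcc} is open, record the reduction to snarks, and drop the speculative middle paragraph.
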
			
The conjecture was first stated in print by Seymour \cite{Sey79} and
Szekeres \cite{Sze73} but at least some versions of the conjecture
seem to have been part of the folklore of the field even earlier. For
a survey see \cite{MR821502} or \cite{MR1426132}. The conjecture has
been proven for some large classes of graphs such as planar graphs and
4-edge-connected graphs. It is also well known that if the conjecture
holds for cubic graphs, then it is true for all graphs. If a cubic
graph is colourable, then it is easy to see that it has a CDC by
considering the cover given by each pair of colours from a given
proper colouring of the edges. It is also well known that a minimum
counterexample must have girth at least 5 and be cyclically
4-edge-connected (see e.g. \cite{MR821502}), and hence it is
sufficient to prove the CDCC for snarks.  These conditions have been
strengthened over the years and we now know that a minimum
counterexample must have girth at least 12, \cite{MR1743825}, and must
therefore be much larger than the snarks under investigation in this
paper.
	
However, there are various stronger versions of the CDCC, and auxiliary conjectures aimed at proving some version of the CDCC, which are not known to hold for small snarks. In the following sections we will investigate some of these variations and conjectures. In doing so, we will also  improve the result in \cite{hagglund10:2} regarding the truth of the CDCC for graphs with sufficiently long cycles. 

We say that a CDC $\mathcal D$ is a \emph{k-CDC} if the cycles of $\mathcal D$ can be coloured by $k$ colours in such a way that no pair of cycles with a common edge has the same colour. An \emph{even} graph is a graph where each vertex has even degree. Since a graph has a cycle decomposition into edge-disjoint cycles if and only if it is even, each colour class of cycles in a $k$-CDC is an even subgraph. It is often convenient to see a $k$-CDC as a $k$-multiset of even subgraphs. 
 	
\subsection{The strong cycle double cover conjecture}
The following strengthening of the CDCC was formulated by Goddyn in \cite{goddynthesis}: 
\begin{conjecture}[Strong Cycle Double Cover Conjecture (SCDCC), Goddyn \cite{goddynthesis}]
	Let $G$ be a bridgeless graph. Then for every cycle $C$ in $G$ there is a CDC that contains $C$.
\end{conjecture}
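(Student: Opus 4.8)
The plan is to follow the classical reduction strategy underlying all work on the ordinary CDCC, but to carry the prescribed cycle $C$ through every step. First I would reduce from arbitrary bridgeless graphs to cubic graphs. A vertex of degree $d\geq 4$ is expanded into a small cubic gadget by repeatedly splitting off pairs of edges, and a vertex of degree $2$ is suppressed; the key is to perform these local surgeries so that $C$ lifts to a cycle $\widetilde{C}$ in the resulting cubic graph $\widetilde{G}$ and so that any CDC of $\widetilde{G}$ containing $\widetilde{C}$ projects back to a CDC of $G$ containing $C$. Since $C$ is $2$-regular it uses exactly two edges at each vertex it meets, so lifting it through a gadget amounts to routing a single path between the two relevant gadget-edges, which is routine; one only has to check that the surgeries create no new bridge. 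The suppression of degree-$2$ vertices can shorten $C$, but never destroys it, so the projection is well defined.

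Second, I would reduce from cubic graphs to snarks while still tracking $C$, in two sub-steps. For \emph{colourable} graphs, a proper $3$-edge-colouring yields a $3$-CDC from the three $2$-factors formed by pairs of colour classes (each such $2$-factor is even by Lemma~\ref{theorem_2-factor}), but this canonical CDC need not contain $\widetilde{C}$; one must instead produce \emph{some} CDC through the prescribed cycle, which I would attempt by induction on the attachments of $\widetilde{C}$, using Kempe-chain recolourings to rotate the $2$-factors until one of them absorbs $\widetilde{C}$, or by Fleischner-style splitting and absorption lemmas. For \emph{reducible connectivity}, cut vertices, $2$-edge-cuts, nontrivial cyclic $3$-edge-cuts, and short cycles witnessing girth $<5$ are removed by the standard decomposition operations, each of which splits $(G,C)$ into smaller instances $(G_i,C_i)$; I would then verify that CDCs of the pieces through the $C_i$ glue into a CDC of the whole through $C$. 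The cyclic $3$-edge-cut case is the most delicate, because $C$ may cross the cut (in $0$ or $2$ of its three edges) and the gluing must keep $C$ intact across the cut.

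The genuinely hard core is the class of snarks itself: a cyclically $4$-edge-connected snark of girth at least $5$ is by definition irreducible under all the operations above, so the inductive machinery stalls, and this is exactly the point at which even the weaker ordinary CDCC remains open. A complete proof therefore demands a fundamentally new idea for this class, and the two routes I would pursue are the surface-embedding viewpoint and an algebraic one. In the embedding route a CDC corresponds to a closed $2$-cell embedding of $G$ on some surface, and a \emph{strong} CDC containing $C$ corresponds to such an embedding with $C$ realised as a single face boundary; the task becomes to prescribe one face of a strong embedding, a prescribed-face strengthening of the Strong Embedding Conjecture. In the algebraic route I would try to manufacture the required family of even subgraphs as solutions of a linear system over $\mathrm{GF}(2)$ (or a nowhere-zero flow condition over $\mathrm{GF}(3)$) with side constraints forcing the edges of $C$ to be covered by a single even subgraph. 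I expect this snark case to be the decisive obstacle; both routes reduce the problem to an equally deep structural statement, and absent a new handle on irreducible snarks neither is known to close.
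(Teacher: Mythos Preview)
The statement you are trying to prove is a \emph{conjecture}, not a theorem: the paper does not prove the SCDCC, it merely states it and then verifies it computationally for snarks on at most $36$ vertices. There is therefore no ``paper's own proof'' to compare against, and any complete proof would be a major breakthrough resolving a problem that has been open for decades. Your proposal is not a proof either, as you yourself concede in the final paragraph: once the reductions reach an irreducible snark, the argument ``stalls'', and the two suggested routes (prescribed-face strong embeddings, or a flow/linear-algebra formulation) are each equivalent to well-known open problems at least as hard as the SCDCC itself. A proof sketch that explicitly ends with ``absent a new handle on irreducible snarks neither is known to close'' is a survey of obstacles, not a proof.

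One concrete correction to the part you \emph{can} carry out: your treatment of the colourable cubic case is far more complicated than necessary. You worry that the canonical $3$-CDC coming from a proper $3$-edge-colouring need not contain the prescribed cycle $C$, and propose Kempe-chain recolourings or inductive absorption. In fact this case is a one-liner, and the paper gives it as Proposition~\ref{prop:cdc}: if $C_1,C_2,C_3$ are the three $2$-factors coming from pairs of colour classes, then for \emph{any} $2$-regular subgraph $D$ (in particular for your cycle $C$) the multiset $\{D,\ D\triangle C_1,\ D\triangle C_2,\ D\triangle C_3\}$ is a $4$-CDC containing $D$. So the colourable case is settled immediately by symmetric differences; all the difficulty is already concentrated in the snark case, where your outline offers no new leverage.
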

It is easy to see that this holds for colourable cubic graphs, where in fact any 2-regular subgraph can be extended to a CDC. 
\begin{proposition} \label{prop:cdc}
	Let $G$ be a colourable cubic graph and let $D$ be any 2-regular subgraph of $G$. Then $G$ has a 4-CDC where $D$ is one of the colour classes.
\end{proposition}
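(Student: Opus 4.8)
The plan is to exploit the classical correspondence between proper $3$-edge-colourings of a cubic graph and its three even $2$-factors, together with the flexibility afforded by symmetric differences. First I would use the colourability of $G$ to fix a proper $3$-edge-colouring with colour classes (perfect matchings) $M_1,M_2,M_3$, and set $F_i := E(G)\setminus M_i$ for $i=1,2,3$. Each $F_i$ is the union of two of the matchings, hence a $2$-factor, and by Lemma~\ref{theorem_2-factor} all of its cycles are even; moreover every edge of $G$ lies in exactly two of $F_1,F_2,F_3$, so $\{F_1,F_2,F_3\}$ is already a ($3$-)CDC of $G$. This is the colourable-case CDC; the work is to twist it so that the prescribed $D$ appears as a colour class.

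Next I would bring $D$ into the picture by replacing this CDC with the four even subgraphs
\[
D,\qquad F_1\triangle D,\qquad F_2\triangle D,\qquad F_3\triangle D ,
\]
where $\triangle$ denotes symmetric difference of edge sets. I would first check that each $F_i\triangle D$ is genuinely an even graph: at any vertex $v$ we have $\deg_{F_i}(v)=2$ and $\deg_D(v)\in\{0,2\}$, so the degree of $v$ in $F_i\triangle D$ equals $\deg_{F_i}(v)+\deg_D(v)$ minus twice the number of common edges at $v$, which is always even; hence $F_i\triangle D$ decomposes into edge-disjoint cycles, and likewise $D$ does, being $2$-regular. So all four members are legitimately $k$-multiset entries of even subgraphs in the sense used for a $k$-CDC.

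The heart of the argument is the edge count. For an edge $e\notin E(D)$ one has $e\in F_i\triangle D\iff e\in F_i$, and $e$ lies in exactly two of the $F_i$; since $e\notin D$ as well, $e$ is covered exactly twice by the multiset $\{D,F_1\triangle D,F_2\triangle D,F_3\triangle D\}$. For an edge $e\in E(D)$ one has $e\in F_i\triangle D\iff e\notin F_i$, and ``lying in two of the three $F_i$'' is the same as ``lying outside exactly one of them'', so $e$ lies in exactly one $F_i\triangle D$; adding the single copy from $D$ gives again a total of $2$. Hence the four even subgraphs form a CDC. Finally I would assign one of four colours to the cycles of each of the four members; this is a proper colouring, because the two cycles through any given edge lie in distinct members (within each member the cycles are pairwise edge-disjoint, as noted above), so they receive distinct colours. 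This exhibits a $4$-CDC with $D$ as one colour class.

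I do not expect a genuine obstacle; the only points needing care are the parity bookkeeping for edges inside $D$ — the flip from ``covered twice by the $F_i$'' to ``covered once by the $F_i\triangle D$'' rests precisely on the fact that, in a cubic graph, a proper $3$-edge-colouring puts each edge outside exactly one of the three $F_i$ — and the verification that the putative colour classes really are even subgraphs, so that the ``$k$-multiset of even subgraphs'' formulation of a $k$-CDC literally applies.
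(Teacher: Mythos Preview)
Your proposal is correct and is precisely the paper's argument: the paper takes the three complementary $2$-factors $C_1,C_2,C_3$ of a $3$-edge-colouring (your $F_1,F_2,F_3$) and asserts that $\{D,\,D\triangle C_1,\,D\triangle C_2,\,D\triangle C_3\}$ is a $4$-CDC, with the symmetric differences taken on edge sets. You have merely supplied the parity and double-cover verifications that the paper leaves implicit.
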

\begin{proof}
	Let $\{C_1,C_2,C_3\}$ be the complementary 2-factor to the 1-factors that corresponds to the 3-edge-colouring. Then $\{D,D\Delta C_1,D\Delta C_2,D\Delta C_3\}$ is a 4-CDC of $G$. Here the symmetric difference is taken with the cycles seen as sets of edges.
\end{proof}
Using standard reductions it is quite easy to see that a minimal counterexample to the SCDCC must be a snark (see e.g. \cite{MR1825619} for details). 
	
There are not many classes of snarks where the SCDCC is known to be true. However one such class is the class of hypohamiltonian snarks, for which Fleischner and H\"aggkvist have shown the following theorem. 
\begin{theorem}[Fleischner, H\"aggkvist \cite{FH09}]\label{hypothm}
		Let $\mathcal C$ be a set of disjoint cycles in a hypohamiltonian graph $G$ such that $\cup_{C\in\mathcal C} V(C) \neq V(G)$. Then $G$ has a CDC $\mathcal D$ such that $\mathcal C\subset \mathcal D$. 
\end{theorem}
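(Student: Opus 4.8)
The plan is to recast the statement as a cycle-decomposition problem and then use the Hamilton cycle supplied by hypohamiltonicity to control it. Write $J=\bigcup_{C\in\mathcal C}E(C)$ and let $2G$ denote $G$ with every edge doubled. The key observation is that a multiset $\mathcal D\supseteq\mathcal C$ of cycles is a CDC of $G$ precisely when $\mathcal D\setminus\mathcal C$ is a decomposition of the multigraph $2G-J$ into genuine (i.e.\ $2$-regular connected) cycles of $G$. Since $J$ is a disjoint union of cycles, $\deg_{2G-J}(u)=2\deg_G(u)-\deg_J(u)$ is even at every vertex, so $2G-J$ certainly admits \emph{some} cycle decomposition; the catch is that a naive one produces closed walks that may repeat a vertex or, worse, traverse a doubled edge twice (a digon), neither of which is allowed in a CDC. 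If $G$ is colourable the statement is already contained in Proposition~\ref{prop:cdc}, applied to the $2$-regular subgraph $\bigcup_{C\in\mathcal C}C$, so we may assume $G$ is a snark.

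Since $\mathcal C$ does not cover $V(G)$, choose a vertex $v$ avoided by it; hypohamiltonicity gives a Hamilton cycle $H$ of $G-v$, and every cycle of $\mathcal C$ lies in $G-v$, hence is built from edges and chords of $H$. I would use $H$ to pin down a decomposition of $2G-J$: the neighbours of $v$ split $H$ into arcs which, together with the edges at $v$, form cycles through $v$ that absorb all doubled edges incident with $v$ and one copy of each edge of $H$; the remaining doubled chords, the remaining copies of the $H$-edges, and the single copies of the $J$-edges are then routed into further cycles that travel along $H$. Using one spanning cycle is what keeps the routing tractable: at a vertex lying on a cycle of $\mathcal C$ the way the decomposition turns is forced — the two $J$-edges there must be peeled off onto non-$J$-edges, so the cycles of $\mathcal C$ themselves are never re-used — and at the other vertices the cyclic order of $V(G)\setminus\{v\}$ provided by $H$ both restricts the admissible local turns and lets one track how the pieces close up.

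The main obstacle is to guarantee that these pieces are genuine cycles: no repeated vertex and no doubled edge used twice. Whenever a piece misbehaves I would perform a local exchange — reroute it through a suitable arc of the spanning cycle $H$ so as to split it into shorter pieces — always choosing the exchange so as not to create a repeated edge, and measuring progress by the total ``excess'' $\sum_C(|E(C)|-|V(C)|)$ over the current pieces, which is a nonnegative integer that strictly drops at each genuine split and vanishes exactly when every piece is a simple cycle. The delicate point is that such an admissible, progress-making exchange must always be available; for a general bridgeless graph no argument of this kind can succeed, since the case $\mathcal C=\emptyset$ would then settle the Cycle Double Cover Conjecture, so this is precisely the step that must exploit the structure of $2G-J$ relative to $H$, and it is where hypohamiltonicity is indispensable. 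The reduction to $2G-J$, the parity count, the choice of $v$, and the bookkeeping of admissible turns are all routine; the existence of the required exchanges is the real content of the proof.
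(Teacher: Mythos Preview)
The paper does not give its own proof of this theorem: it is quoted as a result of Fleischner and H\"aggkvist from \cite{FH09}, so there is nothing in the paper to compare your attempt against line by line.

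That said, your proposal is not a proof but an outline with the decisive step left open. Reformulating the problem as a cycle decomposition of $2G-J$ is sound, and choosing a vertex $v\notin\bigcup_{C\in\mathcal C}V(C)$ to obtain a Hamilton cycle $H$ of $G-v$ is exactly the right first move. But everything after that is hand-waving: you describe an exchange/uncrossing scheme with a potential function, and then explicitly concede that ``the existence of the required exchanges is the real content of the proof'' without supplying it. That is the whole theorem. The potential $\sum_C(|E(C)|-|V(C)|)$ is a reasonable bookkeeping device, but you give no argument that a progress-making exchange avoiding doubled edges always exists, and this is precisely where a naive attempt gets stuck (for instance, a doubled edge of $M\setminus J$ can sit at a vertex whose only other incident edges have already been committed, forcing a digon).

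The actual argument in \cite{FH09} does not proceed by iterated local repairs. It uses $H$ constructively: the three neighbours of $v$ cut $H$ into three arcs, and the cycles through $v$ built from these arcs, together with $H$ itself, give a small explicit family of cycles that already double-covers $E(H)$ and the edges at $v$; one then shows directly how to splice the chords of $H$ and the cycles of $\mathcal C$ into this family (using that in a cubic graph every even subgraph, in particular $H\Delta\bigl(\bigcup\mathcal C\bigr)$, is automatically a disjoint union of simple cycles). The point is that the construction is global and explicit, not an uncontrolled local-improvement process. If you want to salvage your approach you would need to prove a concrete lemma of the form ``a cycle together with a doubled matching of chords always admits a digon-free cycle decomposition'', and then show how to organise the pieces so that the cycles of $\mathcal C$ survive intact; as written, neither ingredient is present.
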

In the same paper they also show that any cycle in a cubic graph  that misses at most one vertex is part of a CDC.
\begin{theorem}[Fleischner, H\"aggkvist \cite{FH09}]
	Let $G$ be a cubic graph on $n$ vertices and let $C$ be a cycle in $G$. If $|V(C)|\geq n-1$ then $C$ can be extended to a CDC. 
	\label{haggkvist1}
\end{theorem}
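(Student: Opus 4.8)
The plan is to handle both cases, $|V(C)|=n$ and $|V(C)|=n-1$, uniformly by a direct decomposition argument rather than by appealing to edge colourability. Since the cover we want must contain $C$, its remaining members, taken together with multiplicity, have to form the multigraph $R$ on $V(G)$ in which every edge of $C$ occurs once and every other edge of $G$ occurs twice. So the whole problem reduces to showing that $R$ decomposes into cycles of $G$, i.e.\ into simple cycles of length at least $3$; the collection of these cycles together with $C$ is then the required cycle double cover.

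First I would describe the edges of $G$ outside $C$. If $|V(C)|=n$ they form a perfect matching, since $G$ is cubic. If $|V(C)|=n-1$, let $v$ be the unique vertex not on $C$ and $a,b,c$ its three neighbours; then the non-$C$ edges are $va,vb,vc$ together with a perfect matching on $V(C)\setminus\{a,b,c\}$. In both cases every vertex of $C$ is incident with exactly one non-$C$ edge, and, in the second case, $v$ is incident with three distinct edges. A short count then shows that every vertex of $R$ has even degree -- degree $4$ at the vertices of $C$ and degree $6$ at $v$ -- so $R$ admits a decomposition into closed trails, one for each choice of a transition system, i.e.\ a pairing of the edge-ends at every vertex.

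The delicate point is that such a decomposition could a priori produce a digon, a $2$-cycle consisting of two parallel copies of one non-$C$ edge, which is not a legitimate cycle of the simple graph $G$. This is avoided by choosing the transitions with care. At a vertex $x$ of $C$, whose edge-ends are two ends of its unique non-$C$ edge $m$ and one end each of its two $C$-edges, I pair the two copies of $m$ with \emph{different} $C$-edges. At $v$, whose six ends are two copies each of $va,vb,vc$, I pair the copies cyclically, $va$ with $vb$, $vb$ with $vc$, $vc$ with $va$. Such a choice is always available because at every vertex of $R$ no single edge contributes more than half of the edge-ends, and then no pair of the chosen transition system consists of the two ends of one edge. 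Consequently no closed trail of this transition decomposition is a digon, and refining each closed trail into simple cycles -- by repeatedly splitting at a repeated vertex, an operation that never creates new pairings of edge-ends -- preserves this. Hence $R$ decomposes into cycles of $G$ of length at least $3$, which completes the proof.

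The main obstacle is precisely this digon-avoidance: $R$ is trivially an Eulerian multigraph and so ``decomposes into cycles'', but in a multigraph a ``cycle'' may be a pair of parallel edges, which is not allowed here; it is the special structure of the non-$C$ edges -- a matching, plus possibly the three edges meeting one extra vertex -- that makes a legitimate choice of transitions always possible. For completeness I would also note that in the Hamiltonian case one may instead invoke Proposition~\ref{prop:cdc} directly, since a cubic graph with a Hamiltonian cycle (of even length $n$) together with the complementary perfect matching is $3$-edge-colourable.
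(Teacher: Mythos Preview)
The paper does not give its own proof of this theorem; it is quoted from Fleischner and H\"aggkvist \cite{FH09} without argument, so there is nothing to compare against. I will therefore only comment on the correctness of your proposal.

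Your reformulation is sound: extending $C$ to a CDC is equivalent to decomposing the multigraph $R$ (each $C$-edge once, each other edge twice) into simple cycles of $G$. The Hamiltonian case is also fine, via the $3$-edge-colouring you mention at the end.

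The gap is in the refinement step. Your claim that ``splitting at a repeated vertex \ldots\ never creates new pairings of edge-ends'' is not true. If a closed trail reads $\ldots,e,w,f,\ldots,g,w,h,\ldots$ and you split at $w$, the two resulting closed trails carry the \emph{new} transitions $\{f,g\}$ and $\{e,h\}$ at $w$, not the original $\{e,f\}$ and $\{g,h\}$. Now take $w\in V(C)$ with non-$C$ edge $m=wx$, so the four edge-ends at $w$ are $c_1,c_2,m_a,m_b$. With your transitions $\{c_1,m_a\},\{c_2,m_b\}$, a closed trail $T$ can perfectly well enter $w$ via $c_1$, leave via $m_a$, and later return to $w$ via $m_b$ and leave via $c_2$; then $f=m_a$, $g=m_b$, and the split creates the parallel pairing $\{m_a,m_b\}$ at $w$. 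The resulting sub-trail is $T_1=w,m_a,x,\ldots,x,m_b,w$; it is not yet a digon, but $x$ is now visited twice by $T_1$, and splitting $T_1$ at $x$ produces $x,m_b,w,m_a,x$, a genuine $2$-cycle. Choosing to split first at $x$ instead of $w$ runs into the symmetric problem. So the argument as written does not establish that $R$ decomposes into simple cycles in the $|V(C)|=n-1$ case; you need a different device to rule out digons, not the ``splitting preserves transitions'' claim.
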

Note that Theorem \ref{hypothm} cannot be generalized to general snarks. Many non-hypohamiltonian snarks have 2-regular subgraphs that miss at least one vertex that cannot be extended to a CDC, but of course these 2-regular subgraphs have several components. 
	
\begin{table}
	\begin{tabular}{l|| cccp{2.7cm}p{2.5cm}}
	Order &  Total & $\lambda_c =5$ & Shortest  & Maximum \\
	\hline
	10-20 	 	& 0	&   -  & -& -\\
	22  		& 2	& 0	& 20	& 1 \\
	24  		& 1	& 0	& 22 & 2 \\
	26  		& 7	& 0	& 23 & 4 \\
	28  		& 25	& 0	& 24 & 10 \\
	30  		& 228 & 0  & 26 & 24 \\
	32   		& 1456 &4 & 26 & 48 \\
	34		& 6902 &33	& 24 & 104 \\
	36             &  35 316&202 & 24 & 180 \\
	\end{tabular}
	\caption{The number of small stable snarks,  the length of the shortest stable cycle among the stable snarks of a given order, and the maximum number of stable cycles in any snark of that order.
}
	\label{stabletab}		
\end{table}

The full set of cycles in cubic graphs is very large and in order to
simplify the study of the SCDCC we may without loss of generality
consider only a certain subset of cycles. Let $G$ be a bridgeless
cubic graph with a cycle $C$. 
We say that $C$ is \emph{stable} if there is no cycle $C'$ distinct from $C$
such that $V(C)\subseteq V(C')$.
A graph that contains stable cycles is called a \emph{stable
  graph}. Stable cycles were studied in
e.g. \cite{MR2305899} and \cite{MR1825619} and the following
observation shows the connection with the SCDCC.
	
\begin{proposition}
	Let $G$ be a minimal counterexample to the SCDCC and $C$ a
        cycle which cannot be extended to a CDC of $H$. Then $G$ is a
        snark and $C$ is a stable cycle.
\end{proposition}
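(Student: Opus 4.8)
This is the standard reduction theory for the SCDCC, which I would invoke essentially verbatim. A minimal counterexample is bridgeless by hypothesis. It is cubic: $2$-valent vertices may be suppressed, and a vertex of degree $\ge 4$ can be split into vertices of smaller degree in a way that respects the cycle $C$ and keeps the graph bridgeless, producing a smaller counterexample. It is uncolourable: if $G$ had a proper $3$-edge-colouring, then Proposition~\ref{prop:cdc} applied to the $2$-regular subgraph $D=C$ would yield a $4$-CDC of $G$ containing $C$, contradicting the choice of $C$. Finally, a cyclically separating $2$- or $3$-edge-cut, a triangle, or a square in $G$ can each be reduced to give a strictly smaller bridgeless graph $G^{*}$ together with the image $C^{*}$ of $C$, so arranged that a CDC of $G^{*}$ through $C^{*}$ lifts to a CDC of $G$ through $C$; minimality of $G$ rules all of these out, so $G$ has girth at least $5$ and is cyclically $4$-edge-connected. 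Hence $G$ is a snark. The detailed reductions are in \cite{MR1825619}.

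\textbf{Part 2: $C$ is stable.} Here I would first make explicit that ``minimal counterexample'' is meant in the refined sense: among all bridgeless graphs admitting a non-extendable cycle choose $G$ with $|V(G)|$ minimum, and among the non-extendable cycles of that $G$ choose $C$ with $|V(C)|$ maximum. Suppose for contradiction that $C$ is not stable, and pick a cycle $C'\neq C$ with $V(C)\subseteq V(C')$ and $|V(C')|$ as large as possible. Note that $C\setminus C'$ and $C'\setminus C$ are both non-empty (a proper subset of the edges of a cycle is a union of paths, not a cycle), so $F:=C\,\Delta\,C'$ is a non-empty even subgraph; its components are cycles, which at every vertex of $V(C)$ alternate between a $C\setminus C'$ edge and a $C'\setminus C$ edge, and through vertices of $V(C')\setminus V(C)$ run along $C'\setminus C$.

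If $|V(C')|>|V(C)|$, then by the maximality of $|V(C)|$ the cycle $C'$ extends to a CDC $\mathcal D$ with $C'\in\mathcal D$, and the goal is to convert $\mathcal D$ into a CDC of $G$ containing $C$: replace $C'$ by $C$ and then re-route a controlled family of members of $\mathcal D$ along the components of $F$ to repair the double-cover condition, contradicting the non-extendability of $C$. If instead $|V(C')|=|V(C)|$, then $G[V(C)]$ must contain a chord of $C$ (otherwise $G[V(C)]=C$ and $C'=C$), and I would derive the contradiction from such a chord, either by building from $C$ and the chord a strictly longer non-extendable cycle (contradicting maximality of $|V(C)|$) or by a reduction to a smaller graph.

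\textbf{Main obstacle.} The crux is the re-routing step in the case $|V(C')|>|V(C)|$. The naive swap $C'\mapsto C$ leaves every edge of $C\setminus C'$ covered three times and every edge of $C'\setminus C$ covered once, and fixing this by taking symmetric differences of members of $\mathcal D$ with the components $O_i$ of $F$ requires, for each $O_i$, a cycle of $\mathcal D$ meeting $O_i$ exactly in $O_i\cap(C\setminus C')$. Showing that such cycles always exist, or that $\mathcal D$ can be modified so that they do, is where the work lies; I expect that the right way to carry it out is not to manipulate a fixed $\mathcal D$ directly but to perform a surgery on $G$ (contracting the part of $G$ ``enclosed'' by $C'$ relative to $C$), apply the validity of the SCDCC for the resulting smaller bridgeless graph, and lift back — which is precisely the point at which minimality of $G$ is used.
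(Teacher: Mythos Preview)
The paper does not give its own proof of this proposition; it states the result and implicitly relies on the cited references \cite{MR2305899} and \cite{MR1825619}. So there is no ``paper's proof'' to match against --- your outline has to be judged on its own.

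Your Part~1 is the standard reduction and is fine; in particular the appeal to Proposition~\ref{prop:cdc} for uncolourability is exactly right.

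Your Part~2, however, is not a proof. You yourself flag the re-routing step as the ``main obstacle'', and that assessment is accurate: replacing $C'$ by $C$ in a CDC $\mathcal D\ni C'$ leaves the edges of $C\Delta C'$ miscovered, and repairing this by symmetric differences requires a member of $\mathcal D$ meeting $C\Delta C'$ exactly in $C\setminus C'$. There is no reason such a member exists, and simple examples (e.g.\ $C\Delta C'$ a single triangle) already show the naive repair can fail. So the case $|V(C')|>|V(C)|$ is genuinely open in your write-up. The case $|V(C')|=|V(C)|$ is not handled either: the existence of a chord of $C$ does not by itself produce a longer non-extendable cycle or a usable reduction, and you only gesture at two possible routes without carrying either out.

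Your final paragraph points in the right direction: the argument in the literature does not manipulate a fixed CDC of $G$ but performs a reduction to a strictly smaller bridgeless graph $(G^{*},C^{*})$, uses the minimality of $G$ to get a CDC of $G^{*}$ through $C^{*}$, and lifts it back to a CDC of $G$ through $C$. That reduction is the actual content, and it needs to be written down (what $G^{*}$ is, why it is bridgeless, how the lift works). Note also that your extra hypothesis that $C$ has maximum length among non-extendable cycles is not part of the stated proposition; it suffices for the paper's application (only the existence of a stable non-extendable cycle in a minimal counterexample is needed to justify checking only stable cycles), but you should say so explicitly rather than silently changing the statement.
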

	
Fleischner gave the first construction of a cubic graph with stable
cycles \cite{MR1283310}. However, his construction does not give rise
to snarks since all resulting graphs have cyclic 3-edge cuts. The
smallest cubic colourable non-hamiltonian graph (the graph can be
found in \cite{MR2122107}) is another example of a stable graph, which
also happens to be cyclically 4-edge connected, but is of course not a
snark. The first example of a snark with stable cycles was given by
Kochol in \cite{MR1825619} where he gives a construction that yields
an infinite family of stable snarks.
	
By computer search we tested all snarks on 36 vertices and less for the presence of stable cycles. The number of stable snarks of a given order can be found in Table \ref{stabletab}, where also the lengths of the shortest stable cycles of a given order and the maximum number of stable cycles in any single graph is presented. In \cite{hagglund10:2} it was shown that there is an infinite family of snarks with a stable cycle of constant length. We have also verified that each of the stable cycles in the snarks of order 36 or less is part of some CDC. 
\begin{observation}
	Every stable cycle in the snarks of order at most 36 can be included in some CDC. 
\end{observation}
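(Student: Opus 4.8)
The plan is to verify the statement by exhaustive computation, but to use the results quoted above to clear away the cases that can be settled without a search, leaving only the genuinely hard instances. So let $G$ be a snark on $n\le 36$ vertices and $C$ a stable cycle in $G$. If $|V(C)|\ge n-1$, then $C$ already extends to a CDC by Theorem~\ref{haggkvist1}; since no snark is hamiltonian this disposes, in particular, of every stable cycle of maximal length in a snark of circumference $n-1$. If $G$ is hypohamiltonian, then Theorem~\ref{hypothm} applied with $\mathcal C=\{C\}$---admissible because $V(C)\ne V(G)$---shows that $C$ lies in a CDC of $G$; so, using a hypohamiltonicity test (cf.\ Table~\ref{numhypo}), all hypohamiltonian snarks may be removed. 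What remains is to check, for every non-hypohamiltonian snark $G$ on at most $36$ vertices and every stable cycle $C$ of $G$ with $|V(C)|\le n-2$, that $C$ can be completed to a CDC.

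For these remaining instances I would run a backtracking search for a CDC containing $C$. Encode a partial CDC as a demand function on $E(G)$ that equals $1$ on the edges of $C$ and $2$ on all other edges; at each step choose a cycle in the subgraph spanned by the edges of positive demand, subtract $1$ from the demand of each of its edges, and recurse, declaring success when all demands vanish and backtracking otherwise (the cover produced always includes $C$ itself). Branching is controlled by forcing the chosen cycle to pass through a fixed edge of currently maximal demand, and by pruning as soon as some edge can no longer be covered the required number of times; repeated subproblems are memoised. The stable cycles themselves, and the ancillary data in Table~\ref{stabletab}, are collected during the cycle enumeration: by the definition of stability one only needs to enumerate cycles that are maximal under inclusion of vertex sets, of which there are comparatively few per graph.

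The one real obstacle is the size of the computation: there are many (snark, stable cycle) pairs, and the CDC search tree for a single pair can a priori be large. Controlling this is exactly the role of the two theoretical reductions above, the forced-edge branching rule, and the memoisation; with these in place the search is expected to terminate on every remaining pair with an explicit CDC through $C$, which together with Theorems~\ref{hypothm} and~\ref{haggkvist1} yields the claim. As a byproduct---since, by the proposition stated earlier in this section, a minimal counterexample to the SCDCC would consist of a snark and a stable cycle of it that lies in no CDC---this simultaneously confirms the SCDCC for all snarks on at most $36$ vertices.
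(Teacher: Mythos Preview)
Your proposal is correct and, at bottom, identical to what the paper does: the Observation is established purely by computer search over all stable cycles in all snarks of order at most $36$, verifying for each that it lies in some CDC. The paper gives no algorithmic detail beyond stating that the verification was carried out, so your backtracking description and the two theoretical shortcuts via Theorems~\ref{hypothm} and~\ref{haggkvist1} are your own additions rather than a different route; they are sound and would legitimately prune the search, but the paper does not claim to have used them.
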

\begin{corollary}\label{scdcc32}
	There are no counterexamples to the strong cycle double cover conjecture of order $n\leq 36$. 
\end{corollary}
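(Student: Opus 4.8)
The plan is to obtain the statement as an almost immediate consequence of two facts already established above: the Proposition asserting that a minimal counterexample to the SCDCC must be a snark in which the obstructing cycle is stable, and the Observation just stated, that every stable cycle in a snark on at most $36$ vertices is contained in some CDC. First I would argue by contradiction. Suppose the SCDCC has a counterexample of order at most $36$, and among \emph{all} counterexamples (of any order) choose one, $G$, of minimum order; then $|V(G)|\le 36$. By definition there is a cycle $C$ in $G$ contained in no CDC of $G$. Since $G$ has minimum order, the reduction Proposition applies: $G$ is a snark and $C$ is a stable cycle. But $G$ is a snark on at most $36$ vertices, so by the Observation $C$ \emph{is} contained in some CDC of $G$, a contradiction. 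Hence no counterexample of order $n\le 36$ exists.

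Both ingredients are already in place, so the step carries no new conceptual content. The reduction Proposition is the standard reduction for the SCDCC from \cite{MR1825619}: colourable cubic graphs are excluded by Proposition~\ref{prop:cdc}, while cyclic $2$- and $3$-edge-cuts and short cycles are removed in the usual way, each step producing a strictly smaller counterexample; this part is routine. The substantive ingredient is the Observation itself, i.e.\ the exhaustive verification that for every snark $G$ on at most $36$ vertices and every stable cycle $C$ of $G$, some CDC of $G$ contains $C$.

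To make that verification feasible I would prune the search using the theorems already available together with the fact, from the reduction Proposition, that only stable cycles must be examined. By Theorem~\ref{haggkvist1} a cycle of length at least $n-1$ always extends to a CDC, so among the stable cycles only those of length at most $n-2$ need to be tested. By Theorem~\ref{hypothm} every cycle $C$ with $V(C)\ne V(G)$ in a hypohamiltonian snark already extends, so the hypohamiltonian snarks (Table~\ref{numhypo}) may be set aside entirely; and since almost all snarks contain no stable cycle at all, one is left with the comparatively small family of stable snarks recorded in Table~\ref{stabletab}. For each of these graphs one enumerates its stable cycles and, for each such cycle, runs a backtracking search for a CDC containing it. The main obstacle is thus purely computational — carrying out this exhaustive search reliably — and, as elsewhere in the paper, it is met by running two independently written programs on separate hardware and comparing the results.
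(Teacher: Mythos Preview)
Your argument is correct and matches the paper's approach exactly: the corollary is an immediate consequence of the preceding Proposition (a minimal counterexample to the SCDCC is a snark and the obstructing cycle is stable) together with the Observation (every stable cycle in a snark on at most $36$ vertices lies in some CDC). The additional two paragraphs, where you sketch how one might prune the computational verification of the Observation using Theorems~\ref{hypothm} and~\ref{haggkvist1}, are reasonable elaborations but go beyond what the paper actually records; the paper simply reports that the stable cycles were enumerated and each was checked to lie in some CDC, without detailing any such pruning.
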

Although we have verified the SCDCC for all snarks on 36 vertices and less, we have also observed that there are cycles which can only be part of 
exactly one CDC. Due to the increasing numbers of both cycles and snarks this is a time consuming property to test so here we only give data up to $n=26$
\begin{observation}
	There is one snark of order 10, one snark of order 18, there
        are 2 snarks of order 20, 6 of order 22, 9 of order 24, and 42
        of order 26 which have cycles which are part of exactly one
        CDC.
\end{observation}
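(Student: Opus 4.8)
The plan is to establish the observation by exhaustive computation, in the same spirit as the other census results in this paper. First I would take the (already available) lists of all snarks on $10,18,20,22,24$ and $26$ vertices produced by the generation algorithm of Section~3. For each snark $G$ I would enumerate the set of all cycles of $G$, that is, all $2$-regular connected subgraphs, by a routine backtracking search; since these graphs are small this step is cheap. The real work is a subroutine that, given a cycle $C$ of $G$, decides whether $C$ lies in exactly one cycle double cover of $G$ or in at least two.

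To analyse a fixed cycle $C$, I would run a backtracking enumeration of the CDCs of $G$ containing $C$, organised around a residual demand function $d\colon E(G)\to\{0,1,2\}$ initialised by $d(e)=1$ for $e\in E(C)$ and $d(e)=2$ for every other edge: repeatedly choose an edge $e$ with $d(e)>0$, branch over all cycles through $e$ that avoid every edge already at demand $0$, decrement $d$ along the chosen cycle, and recurse; a leaf with $d\equiv 0$ is a CDC containing $C$. In a cubic graph this search prunes aggressively, since at each vertex the three incident edges must in the end receive the same number of further coverings, which tightly constrains the local transition structure of any completion. Because the statement only asks us to separate ``exactly one CDC through $C$'' from ``at least two'', and Corollary~\ref{scdcc32} already guarantees at least one such CDC for all $n\le 36$, the enumeration can be aborted as soon as a second CDC containing $C$ is found, which keeps the cost bounded. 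A snark $G$ is then put on the list precisely when some cycle $C$ of $G$ lies in exactly one CDC; summing over each order and matching the totals $1,1,2,6,9,42$ proves the observation. There is no ambiguity in the counting: $C$ cannot occur with multiplicity two in a CDC of a snark, since that would make $C$ a connected component, forcing $C=G$. As with the other computational results here, the verification would be done by independent implementations written by the two groups and run on separate hardware, with the resulting counts compared.

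The only real difficulty is one of scale rather than mathematics. The number of cycles of a cubic graph, and still more the number of cycle double covers, grows quickly with the order, which is exactly why the data stops at $n=26$. I do not see any structural reduction that would allow one to restrict attention to a conveniently small family of cycles: a cycle lying in a unique CDC need not be stable, dominating, or long, and the natural special families do not capture all such cycles, so every cycle really has to be tested. Consequently the efficiency of the CDC-enumeration subroutine---the vertex-local pruning together with the early exit once two covers have been found---is what decides whether the computation is feasible, and it is there that care is needed.
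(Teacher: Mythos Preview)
Your proposal is correct and matches the paper's approach: the observation is a purely computational census, and the paper offers no argument beyond noting that the property is time-consuming to test and that independent programs were used. Your description of the enumeration---listing all cycles of each generated snark and, for each cycle, running a demand-driven backtrack over CDCs with an early exit after two are found---is exactly the kind of routine the paper's methodology implies, and your remarks on why the data stops at $n=26$ and why a cycle cannot occur with multiplicity two in a CDC of a cubic graph are apt.
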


As shown in \cite{hagglund10:2} lower bounds on the size of a minimal counterexample to the SCDCC have implications for the existence of CDCs in graphs with high circumference. 
\begin{theorem}[H\"agglund, Markstr\"om \cite{hagglund10:2}]\label{thmcirc}
		If  $k\in\mathbb N$ and the SCDCC holds for all cubic graphs of order at least $4k$  then every cubic graph of order $n$ with circumference at least $n-k-1$ has a CDC. 
\end{theorem}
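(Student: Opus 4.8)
The plan is to reduce to the uncolourable case and then, for graphs that are still too small, to enlarge them along a long cycle until the order is big enough for the hypothesis to apply. First, if $G$ is colourable then by Proposition~\ref{prop:cdc} it already has a CDC, so we may assume $G$ is uncolourable; we may also assume $G$ is bridgeless, since the existence of a CDC fails otherwise. If $n\ge 4k$ there is nothing left to prove: the hypothesis says the SCDCC holds for $G$, and in particular $G$ has a CDC (apply the SCDCC to any cycle of $G$, which exists since $G$ is cubic). So from now on assume $n<4k$, and fix a longest cycle $C$ of $G$, so that $|V(C)|\ge n-k-1$, i.e.\ $C$ misses at most $k+1$ vertices.

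Next I would enlarge $G$ by repeatedly applying the \eio to two edges lying on $C$. A single such step subdivides two edges $e,e'\in E(C)$ by new vertices $z,z'$ and joins $z$ to $z'$; the resulting graph $\hat G$ is again cubic and connected and has two more vertices than $G$, and — this is the reason for taking $e,e'$ on a common cycle — it is again bridgeless, since deleting $e$ and $e'$ keeps $G$ connected (the two arcs of $C$ survive) and $zz'$ itself lies on the cycle obtained from $C$ by subdividing $e$ and $e'$. That same subdivided cycle $\hat C$ has $|V(\hat C)|=|V(C)|+2$, so $\hat G$ has circumference at least $|V(\hat G)|-k-1$ and still carries a long cycle to operate on. Iterating, after finitely many steps we reach a bridgeless cubic graph $\hat G$ with $|V(\hat G)|\ge 4k$ together with a cycle $\hat C$ missing at most $k+1$ of its vertices. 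By the hypothesis the SCDCC holds for $\hat G$, so there is a CDC $\hat{\mathcal D}$ of $\hat G$ with $\hat C\in\hat{\mathcal D}$.

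The remaining, and \textbf{main, task} is to push $\hat{\mathcal D}$ back down to a CDC of $G$ by undoing the \eios one at a time; thus it suffices to prove the local statement: if $\hat G$ arises from a graph $G'$ by a single \eio on two edges of a cycle $C'$ and $\hat G$ has a CDC containing the corresponding extended cycle, then $G'$ has a CDC containing $C'$. The difficulty is entirely local, at the inserted edge $zz'$: upon contraction the two cover cycles through $zz'$ must be rerouted so that every edge incident with $z$ or $z'$ — in particular the two edges $e,e'$ that reappear in $G'$ — is covered exactly twice and so that what remains is a genuine family of cycles rather than merely an even subgraph. This is exactly where the SCDCC, and not just the plain CDCC, is needed: the fact that the extended cycle is one of the cover cycles fixes in advance how that cycle meets the gadget, which, combined with the elementary count that at every vertex of a cubic graph exactly one cover cycle uses each of the three pairs of incident edges, pins down the rest of the cover near the gadget tightly enough to perform a consistent rerouting. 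I expect this gadget bookkeeping to be the bulk of the proof; granting it, peeling off the \eios one by one returns a CDC of $G$ (indeed one containing $C$), and the theorem follows.
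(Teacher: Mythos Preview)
The paper does not prove this statement; immediately after the theorem it writes ``See \cite{hagglund10:2} for a proof.'' So there is no in-paper argument to compare against.

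More importantly, the hypothesis as printed is almost certainly a typo: ``order at least $4k$'' should read ``order at most $4k$''. This is forced by the way the paper applies the theorem. Corollary~\ref{scdcc32} verifies the SCDCC only for $n\le 36$, and from it Corollary~\ref{circ} deduces that circumference $\ge n-10$ guarantees a CDC; that deduction needs $k=9$, $4k=36$, and the hypothesis ``SCDCC holds for all cubic graphs of order \emph{at most} $36$''. Under the literal ``at least'' reading, knowing the SCDCC for small graphs says nothing about large ones and the corollary would not follow. The sentence introducing the theorem --- ``lower bounds on the size of a minimal counterexample to the SCDCC have implications\ldots'' --- says the same thing: a lower bound on a minimal counterexample is exactly the assertion that SCDCC holds \emph{up to} some order.

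Your whole strategy is built on the literal reading: because you need the hypothesis to bite on a \emph{large} graph, you enlarge $G$ along $C$ by repeated edge insertions until it has $\ge 4k$ vertices, apply the SCDCC there, and then try to push the CDC back down. For the intended statement the argument must run in the opposite direction: the case $n\le 4k$ is immediate from the hypothesis, and when $n>4k$ one has to \emph{reduce} $G$ along the long cycle $C$ (which misses at most $k+1$ vertices) to an auxiliary bridgeless cubic graph of order at most $4k$ carrying a corresponding cycle, apply the SCDCC hypothesis to that small graph, and lift the resulting CDC back to $G$. Your enlargement device does not touch this case at all, so as a proof of the result actually established in \cite{hagglund10:2} it has a genuine gap at the very first branching step.

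(As an aside: for the statement exactly as printed, your push-back step can in fact be completed. If $D_1,D_2$ are the two cover cycles through the inserted edge $zz'$, then after deleting $z,z'$ they become paths $P_1,P_2$ with endpoints among $a,b,c,d$, and $P_1+P_2+\{ab,cd\}$ is a single closed walk in $G'$, which splits into simple cycles by the usual first-repeated-vertex argument. So the local bookkeeping you deferred is not the real obstacle; the direction of the reduction is.)
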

See \cite{hagglund10:2} for a proof. 
Together with corollary \ref{scdcc32} this gives the following:
\begin{corollary}\label{circ}
	Let $G$ be a bridgeless cubic graph of order $n$. If $G$ has a cycle of length at least $n-10$ then $G$ has a CDC. \label{thm1}
\end{corollary}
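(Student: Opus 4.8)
The plan is to obtain Corollary~\ref{circ} as an essentially immediate consequence of Theorem~\ref{thmcirc} combined with Corollary~\ref{scdcc32}; there is no new combinatorial argument to make, only a matching-up of parameters. First I would rewrite the hypothesis: ``$G$ has a cycle of length at least $n-10$'' says precisely that the circumference of $G$ is at least $n-k-1$ for $k=9$, and for this value one has $4k=36$. Theorem~\ref{thmcirc} (applied with $k=9$) then reduces the statement to verifying the SCDCC for all bridgeless cubic graphs of order at most $4k=36$.

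Next I would check that this reduced statement is already in hand. For colourable cubic graphs the SCDCC holds with no restriction on the order at all: by Proposition~\ref{prop:cdc} every $2$-regular subgraph of a colourable cubic graph, and in particular every cycle, extends to a $4$-CDC. For the uncolourable cubic graphs it suffices to recall, as noted in the discussion following the SCDCC, that a minimal counterexample to the SCDCC must be a snark, hence cyclically $4$-edge-connected and of girth at least $5$. Thus Corollary~\ref{scdcc32} — which records that the exhaustive search found no counterexample to the SCDCC on $n\le 36$ vertices — already guarantees that no bridgeless cubic graph of order at most $36$ is a counterexample to the SCDCC. Feeding this into Theorem~\ref{thmcirc} with $k=9$ yields that every bridgeless cubic graph of order $n$ with circumference at least $n-10$ has a CDC, which is exactly the assertion of Corollary~\ref{circ}.

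The only genuinely substantial ingredient here is Corollary~\ref{scdcc32}, and hence the exhaustive generation of snarks up to $36$ vertices together with the CDC computations behind the \emph{Observation} preceding it; everything else is bookkeeping. The one point where a little care is needed is the constant: one must make sure that the form of Theorem~\ref{thmcirc} being invoked only requires the SCDCC to hold for cubic graphs of order \emph{at most} $4k$ (i.e.\ that the reduction in \cite{hagglund10:2} passes from a putative minimal counterexample of order $n$ to a strictly smaller graph of order at most $4k$), and that ``circumference at least $n-k-1$'' with $k=9$ really does match ``cycle of length at least $n-10$''. I would also point out explicitly that the bridgeless hypothesis in Corollary~\ref{circ} is essential and not an oversight — a cubic graph with a bridge has no CDC at all, yet can have circumference as large as $n-3$ — which is consistent with the fact that Theorem~\ref{thmcirc} is really a statement about bridgeless cubic graphs throughout.
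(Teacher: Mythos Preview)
Your proof is correct and follows exactly the paper's approach: the paper derives Corollary~\ref{circ} in one line by combining Theorem~\ref{thmcirc} (with $k=9$) and Corollary~\ref{scdcc32}, and you have simply spelled out the parameter matching and the reduction-to-snarks step more explicitly. Your caution about reading the hypothesis of Theorem~\ref{thmcirc} as ``order at most $4k$'' rather than the printed ``at least $4k$'' is well placed --- the intended hypothesis is indeed the one you use, as is clear from how the paper applies it together with Corollary~\ref{scdcc32}.
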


\subsection{Semiextensions and superstable cycles}
It was shown by Moreno and Jensen in \cite{MR2305899} that a minimum counterexample to the SCDCC must also satisfy a stronger condition than having stable cycles. Let $C$ be a cycle in a graph $G$. We say that $C$ has a \emph{semiextension} if there exists another cycle $D\neq C$ in $G$ such that for every path $P=xv_1v_2\dots v_k y$ where $x\in V(C)\setminus V(D)$, $y\in V(C)\cup V(D)$ and $v_1,\dots,v_k\in V(G)\setminus(V(C)\cup V(D))$ we have another $x-y$-path $P'$ where $P'\subset C\Delta D$. A cycle with no semiextension is called a \emph{superstable cycle}. 
\begin{theorem}[Esteva and Jensen \cite{MR2305899}]
		Let $(G,C)$ be a minimal counterexample to SCDCC. Then $C$ must be a superstable cycle. 
\end{theorem}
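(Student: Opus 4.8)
The plan is to argue by contradiction. Suppose $(G,C)$ is a minimal counterexample to the SCDCC but $C$ is \emph{not} superstable, so $C$ has a semiextension witnessed by some cycle $D\neq C$. The goal is to exhibit a cycle double cover of $G$ that contains $C$ --- equivalently, to produce a strictly smaller counterexample --- contradicting minimality.

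First I would collect the structure that is already available. By the proposition preceding the statement, a minimal counterexample $G$ is a snark and $C$ is a stable cycle; in particular $G$ is cubic, cyclically $4$-edge-connected, has girth at least $5$, and \emph{no} cycle of $G$ other than $C$ contains $V(C)$. I would also record the elementary observations that $C\Delta D$ (symmetric difference of edge sets) is an even subgraph of $G$ and therefore decomposes into edge-disjoint cycles; that every vertex of $V(C)\setminus V(D)$ has both its $C$-edges lying in $C\Delta D$; and that, because $C$ is stable, $C\Delta D$ is not a single cycle through $V(C)$. It is convenient to fix $D$ to be a semiextension of $C$ that is extremal in a suitable sense (for instance minimizing $|E(C)\cap E(D)|$ or $|E(C\Delta D)|$), which rules out degenerate overlaps between $C$ and $D$.

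The heart of the argument is a reduction that is powered precisely by the defining property of a semiextension. I would build a smaller graph $G'$ from $G$ by collapsing the region carried by $C\cup D$: concretely, one works inside $C\Delta D$, suppresses the shared edges $E(C)\cap E(D)$, and re-routes so that $G'$ remains bridgeless and cubic --- this is where cyclic $4$-edge-connectivity of $G$ and the extremal choice of $D$ are used to exclude small cuts and loops --- together with a distinguished cycle $C'$ of $G'$ that is the image of $C$. The point of the semiextension condition is that it is exactly the compatibility hypothesis needed to transport a cycle double cover of $G'$ containing $C'$ back to one of $G$ containing $C$: every ``exterior ear'' $P=xv_1\cdots v_k y$ of $G$ with $x\in V(C)\setminus V(D)$, $y\in V(C)\cup V(D)$ and interior outside $V(C)\cup V(D)$ survives the collapse, and the replacement path $P'\subseteq C\Delta D$ guaranteed by the definition prescribes exactly how the covering cycles of the lifted CDC must be re-routed through the collapsed region so that each edge of $G$ ends up covered twice. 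Since $G'$ is a bridgeless graph strictly smaller than $G$, minimality of $(G,C)$ forces $(G',C')$ not to be a counterexample, hence such a CDC of $G'$ exists; lifting it yields a CDC of $G$ containing $C$, the desired contradiction.

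I expect the main obstacle to be making the reduction precise and verifying the lift in full generality. Two things have to be checked carefully: (i) that the collapse can always be defined so that $G'$ is genuinely a bridgeless graph with fewer edges (or vertices) and a well-defined image $C'$, covering the cases where $V(C)\cap V(D)$ is small or where $C$ and $D$ share long subpaths; and (ii) that the replacement paths handed to us by the semiextension condition can be assembled \emph{simultaneously}, over all exterior ears at once, into a coherent global re-routing of a CDC with no edge over- or under-covered. A clean way to organize (ii) is to restate ``$C$ extends to a CDC'' for cubic graphs in terms of the associated even-subgraph data and then observe that the semiextension condition is precisely the statement that this data transports across the reduction; this is the step that actually consumes the definition of semiextension, and it is also the step that dictates which genericity assumptions on $D$ are needed.
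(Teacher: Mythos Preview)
The paper does not contain a proof of this theorem: it is quoted from Esteva and Jensen \cite{MR2305899} and no argument is reproduced here. There is therefore no ``paper's own proof'' to compare your attempt against.

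On your sketch itself: the overall strategy --- take a semiextension $D$, pass to a strictly smaller instance, invoke minimality, and lift a CDC back --- is the natural shape of a reduction argument and is in the spirit of what one expects from \cite{MR2305899}. But what you have written is a plan, not a proof. The central object, the ``collapse of the region carried by $C\cup D$'' into a bridgeless cubic graph $G'$ with a distinguished cycle $C'$, is never actually defined; you only say one ``suppresses the shared edges $E(C)\cap E(D)$ and re-routes''. You yourself flag both the construction of $G'$ and the simultaneous lifting of the covering cycles as the places where the real work lies, and nothing in the proposal carries out either step. The semiextension condition is tailored to a specific reduction in \cite{MR2305899}; whether your informal collapse coincides with that reduction, and whether the replacement paths $P'\subseteq C\Delta D$ can be assembled coherently as you hope, cannot be judged from what you have written. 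If your aim is a self-contained argument, you will need to specify $G'$ and $C'$ precisely and verify the lift edge by edge; otherwise, the honest move is to cite \cite{MR2305899} as the present paper does.
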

They also gave the following conjecture, which implies the SCDCC.
\begin{conjecture}[Esteva and Jensen \cite{MR2305899}]
		There are no 2-connected cubic graphs with superstable cycles. 
\end{conjecture}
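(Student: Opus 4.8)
The statement asks us to rule out superstable cycles in every $2$-connected cubic graph; equivalently, to show that each cycle $C$ in such a graph $G$ admits a semiextension. The plan would begin with an easy reduction. If $C$ is not stable, there is a cycle $C'\neq C$ with $V(C)\subseteq V(C')$; then $V(C)\setminus V(C')=\emptyset$, so no path of the type occurring in the definition of a semiextension exists, the requirement is vacuously satisfied, and $C'$ is a semiextension of $C$. The same remark applies whenever some cycle distinct from $C$ has vertex set exactly $V(C)$. Hence we may assume $C$ is stable, and in particular that any semiextension $D$ we seek must satisfy $V(C)\setminus V(D)\neq\emptyset$, so that the ear condition becomes genuinely restrictive. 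Note that at this point one cannot pass to snarks, since, as the examples recalled above show, colourable $2$-connected cubic graphs may carry stable cycles.

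For a stable cycle $C$ one would try to build the semiextension $D$ from the bridge structure of $C$ in $G$: the chords of $C$ together with the components of $G-V(C)$ and their feet on $C$. Stability forces each individual bridge to attach to $C$ in a way that forbids picking up all of $V(C)$ along a single detour, so $D$ must recombine several bridges at once, and the symmetric difference $C\Delta D$ --- which is an even subgraph --- must be spread out enough that, for every vertex $x\in V(C)\setminus V(D)$ and every ear leaving $x$ into $G-(V(C)\cup V(D))$, an alternative route already lives inside $C\Delta D$. A realistic first target is the class of graphs of small oddness: when $G$ has a $2$-factor with at most four odd components, the cycle double cover constructions of \cite{MR1328296,MR1815603,MR2117938,markstrom10} ought to be adaptable to produce, for a prescribed $C$, a CDC containing $C$, out of which one can read off a semiextension. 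The hypohamiltonian case is more direct still, since Theorem~\ref{hypothm} already yields a CDC through $C$ whenever $C$ misses at least one vertex.

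The main obstacle is that the general case is, by design, at least as hard as the cycle double cover conjecture itself: the present conjecture implies the SCDCC, which implies the CDCC, while conversely a minimal counterexample here is --- by the theorem of Esteva and Jensen stated above --- a snark carrying a superstable cycle. I therefore would not expect a complete proof with current techniques; what seems realistically attainable is (i) the conjecture for the structured subclasses mentioned above, and (ii) reduction lemmas showing that a minimal counterexample must be cyclically $6$-edge-connected, of large girth, and free of the small configurations which already make a cycle non-superstable, mirroring and presumably strengthening the reductions known for the CDCC. The persistently delicate point is that the ear condition can fail at a single vertex of $V(C)\setminus V(D)$ even for a cycle $D$ that looks locally correct, so any construction has to control all such vertices simultaneously.
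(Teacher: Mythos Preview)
The statement is presented in the paper as an open \emph{conjecture}; the paper offers no proof whatsoever, only the computational observation that every cycle in each snark on at most $36$ vertices has a semiextension. You correctly recognise this and explicitly decline to claim a complete argument, so there is no proof in the paper to compare your proposal against.

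Two remarks on the content of your sketch. Your reduction to stable cycles is correct: if $V(C)\subseteq V(C')$ for some cycle $C'\neq C$, then $V(C)\setminus V(C')=\emptyset$ and the semiextension condition is satisfied vacuously. However, your write-up is internally inconsistent on the role of snarks. Early on you (rightly) note that one cannot immediately pass to snarks, since colourable $2$-connected cubic graphs can carry stable cycles; yet later you assert that ``a minimal counterexample here is --- by the theorem of Esteva and Jensen stated above --- a snark carrying a superstable cycle''. The Esteva--Jensen theorem you cite concerns minimal counterexamples to the \emph{SCDCC}, not to the present conjecture; it does not by itself force a minimal $2$-connected cubic graph with a superstable cycle to be uncolourable. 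The paper restricts its computer check to snarks simply because snarks are what it generates, not because a reduction to snarks has been established for this conjecture. If you want to invoke such a reduction, you need a separate argument that every cycle in a colourable bridgeless cubic graph admits a semiextension; that may well be true, but it is not supplied by the theorem you quote.
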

Using a computer we have verified this conjecture for all snarks of order at most 36. 
\begin{observation}
		All cycles in the snarks on $n\leq 36$ vertices have semiextensions. 
\end{observation}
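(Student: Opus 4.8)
The plan is to reduce the statement to a finite, fully verified computation that reuses the complete lists of snarks on $n\le 36$ vertices produced by \textit{snarkhunter} together with the census of stable cycles already recorded in Table~\ref{stabletab}. Note that the Observation does not follow from the truth of the SCDCC for these snarks (Corollary~\ref{scdcc32}): the theorem of Esteva and Jensen \cite{MR2305899} only asserts that a \emph{minimal counterexample} to the SCDCC must contain a superstable cycle, so a graph may perfectly well satisfy the SCDCC and still carry a superstable cycle. A direct check is therefore needed.

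The key reduction is to the stable cycles. If a cycle $C$ in a bridgeless cubic graph $G$ is not stable, then by definition there is a cycle $C'\neq C$ with $V(C)\subseteq V(C')$; but then $V(C)\setminus V(C')=\emptyset$, so the defining condition for a semiextension is satisfied vacuously and $C'$ is a semiextension of $C$. Hence a cycle with no semiextension --- a superstable cycle --- is in particular stable, and it suffices to exhibit, for every snark $G$ on $n\le36$ vertices and every \emph{stable} cycle $C$ of $G$, one semiextension $D$. The snarks containing stable cycles, and the number of stable cycles in each, are exactly the data underlying Table~\ref{stabletab}, so this is the only remaining work.

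For a fixed pair $(G,C)$ with $C$ stable, deciding whether a given cycle $D\neq C$ is a semiextension of $C$ is a polynomial-time check (it reduces to a few graph-connectivity queries): since $G$ is cubic, the symmetric difference $C\Delta D$ of the two edge sets has all vertex degrees $0$ or $2$ and is therefore a disjoint union of cycles, with every vertex of $V(C)\setminus V(D)$ lying on one of them; the semiextension condition then says precisely that no path $P=xv_1\cdots v_k y$ with $x\in V(C)\setminus V(D)$, $y\in V(C)\cup V(D)$ and $v_1,\dots,v_k\notin V(C)\cup V(D)$ joins two vertices lying in distinct components of $C\Delta D$, and when the endpoints of such a $P$ do lie in a common component $R$ of $C\Delta D$ that cycle $R$ contains the required alternative path $P'\subset C\Delta D$. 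One iterates over candidate cycles $D$, ordered so that cycles close to $C$ (sharing many edges, or differing from $C$ by a single ear) are tried first, and stops at the first $D$ that passes. I would implement this twice independently and run the two programs on separate machines, as was done for the other properties in this section, and record that a semiextension is found for every stable cycle of every snark on $n\le 36$ vertices.

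The main obstacle is one of scale rather than of mathematics: a cubic graph on $36$ vertices can contain exponentially many cycles, so a naive search over all pairs $(C,D)$ is infeasible for the tens of thousands of stable snarks of order at most $36$, and especially for the largest of them. Feasibility comes precisely from the two shortcuts above --- restricting the outer loop to the precomputed stable cycles, and stopping at the first semiextension found after ordering the candidate cycles $D$ by proximity to $C$ --- so that in practice only a few candidates per stable cycle ever need to be examined.
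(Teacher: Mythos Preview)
Your proposal is correct and is essentially a detailed plan for the computer verification that the paper reports with no algorithmic description beyond ``using a computer we have verified this conjecture for all snarks of order at most 36.'' The reduction to stable cycles---via the observation that $V(C)\subseteq V(D)$ makes $D$ a vacuous semiextension of $C$, so a superstable cycle is in particular stable---is valid and is the natural way to make the search feasible; the paper does not say whether the authors used it, so no finer comparison is possible.
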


A \emph{strong semiextension} of a cycle $C$ is a semiextension where $C\Delta D$ has a single component. 
In \cite{MR2531643} Moreno and Jensen posed the following problem:
\begin{problem}[Esteva and Jensen \cite{MR2531643}]
		Are there any 3-edge-connected graphs which have a cycle which does not have a strong semiextension? 
\end{problem}
We have tested this property for all small snarks.
\begin{observation}
	All cycles in snarks on $n\leq 36$ vertices have a strong semiextension.
\end{observation}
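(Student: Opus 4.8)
The plan is to establish the observation by exhaustive search over the complete lists of snarks on $n\leq 36$ vertices produced by \textit{snarkhunter}. For a fixed snark $G$ the statement reduces to the following task: for every cycle $C$ of $G$, exhibit at least one cycle $D\neq C$ that is a strong semiextension of $C$, i.e.\ such that $C\Delta D$ (viewed as a subgraph) is connected and, for every path $P=xv_1\cdots v_k y$ with $x\in V(C)\setminus V(D)$, $y\in V(C)\cup V(D)$ and $v_1,\dots,v_k\in V(G)\setminus(V(C)\cup V(D))$, there is an $x$--$y$ path contained in $C\Delta D$.

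First I would enumerate the cycles of each snark. Since a cubic graph can have exponentially many cycles, this is the dominant cost, so the enumeration should carry an early-termination rule: as soon as a witness $D$ is found for a given $C$, move on to the next cycle. A large fraction of the cycles can in fact be dispatched cheaply. If $C$ is not stable there is a cycle $C'$ with $V(C)\subseteq V(C')$, and then $V(C)\setminus V(C')=\emptyset$, so the path-replacement condition for the pair $(C,C')$ is vacuous and one only has to check that $C\Delta C'$ is connected; similarly one can recycle the semiextensions already exhibited when verifying the earlier observation that every cycle has a semiextension, and test whether any of them happens to be strong. The genuinely hard cases are therefore the stable (in particular the superstable) cycles, which by Table~\ref{stabletab} are comparatively rare. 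Second, for a candidate pair $(C,D)$ the strong-semiextension test is direct: (i) decide connectivity of $C\Delta D$ by a single breadth-first search, and compute its component structure; (ii) generate the relevant paths $P$ by contracting $V(G)\setminus(V(C)\cup V(D))$ to its connected pieces, listing the attachment points into $V(C)\cup V(D)$, and for each resulting pair $(x,y)$ checking that $x$ and $y$ lie in the same component of $C\Delta D$; once (i) and the component structure are known this is cheap. If every stable cycle passes, all cycles do, and the observation follows for $G$; ranging over all snarks of order at most $36$ gives the full statement.

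The main obstacle is sheer scale: there are more than sixty million snarks on $36$ vertices, each potentially with many thousands of cycles, and this test must be run on top of all the other tests in this section. The implementation must therefore be carefully engineered — compact cycle encodings, bitset operations on edge sets, and aggressive pruning once a witness is found — and the property does not collapse to a check on a proper subfamily, so the verification genuinely requires the whole list. As with the other observations in this section, I would have the search carried out by independently written programs on separate machines in Belgium and Sweden and demand complete agreement of the results.
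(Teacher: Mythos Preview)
Your proposal is correct and matches the paper's approach: the observation is a bare computational fact, established by exhaustively testing every cycle in every generated snark for a strong semiextension, and the paper offers no argument beyond reporting the outcome of that search. Your added algorithmic detail (dispatching non-stable cycles via a covering cycle, recycling witnesses from the semiextension test, bitset engineering, independent duplication) is reasonable but goes beyond what the paper states; note only that the shortcut for non-stable $C$ still requires a fallback when $C\Delta C'$ happens to be disconnected, and that the path test must restrict the endpoint $x$ to $V(C)\setminus V(D)$ and also handle the $k=0$ (single-edge) case.
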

	
\subsection{Even Cycle Double Covers}
In \cite{markstrom10} Markstr\"om investigated cycle double covers restricted to using only even length cycles and posed the following problem:
\begin{problem}[Markstr\"om \cite{markstrom10}]
	Is the Petersen graph the only cyclically 4-edge connected
        cubic graph which does not have a CDC consisting
        only of cycles of even length?
\end{problem}
Cycle double covers of this type are called even cycle double covers. 
In \cite{markstrom10} it is shown that a possible smallest example of another graph than the Petersen graph
would be a snark. We have verified this property for all snarks on 36 vertices and less.
	
\begin{corollary}\label{ecdcc}
	All cyclically 4-edge connected cubic graphs on  $n\leq 36$  vertices, except the Petersen graph, have an even cycle double cover.
\end{corollary}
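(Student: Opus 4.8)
The plan is to split the class of cyclically $4$-edge-connected cubic graphs into the colourable ones, which carry even cycle double covers for trivial reasons, and the uncolourable ones, which we reduce to the snarks that have already been enumerated and verified. For the colourable case, let $G$ be cubic and colourable and fix a proper $3$-edge-colouring with colour classes (perfect matchings) $M_1, M_2, M_3$. For each pair $i \neq j$ the spanning subgraph $M_i \cup M_j$ is $2$-regular and each of its cycles alternates between colours $i$ and $j$, hence has even length; moreover every edge of $G$ lies in exactly two of the three sets $M_1 \cup M_2$, $M_1 \cup M_3$, $M_2 \cup M_3$. Decomposing these three even $2$-factors into their cycles therefore yields an even cycle double cover of $G$ (in fact an even $3$-CDC). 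So every colourable cyclically $4$-edge-connected cubic graph has an even CDC, and all such graphs are in any case distinct from the Petersen graph.

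It remains to handle the uncolourable graphs. Suppose for contradiction that some cyclically $4$-edge-connected cubic graph on at most $36$ vertices, other than the Petersen graph, has no even CDC, and pick such a graph $G$ on the fewest vertices; by the first paragraph $G$ is uncolourable. By the result of \cite{markstrom10} quoted above, a smallest cubic graph other than the Petersen graph without an even CDC is a snark; concretely, for $n \geq 6$ cyclic $4$-edge-connectivity already forbids triangles, and a $4$-cycle in $G$ could be removed by the reduction of \cite{markstrom10} to produce a strictly smaller counterexample (still cyclically $4$-edge-connected and still without an even CDC), contradicting the minimality of $G$. Hence $g(G) \geq 5$ and $G$ is a snark on at most $36$ vertices distinct from the Petersen graph. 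But the computer search reported above verifies precisely that every such snark has an even CDC — a contradiction. Note that this argument also covers the weak snarks of girth $4$ on $36$ vertices, which were not generated directly: a counterexample among them would force a smaller snark counterexample, which does not exist.

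The only genuinely non-computational ingredient is the reduction lemma of \cite{markstrom10} guaranteeing that a $4$-cycle (and, after further reductions, any cycle-separating small edge cut) in a minimal counterexample can be eliminated while preserving uncolourability and the absence of an even CDC; I would cite this rather than reprove it, after which the argument above is complete. The main obstacle is therefore the scale and reliability of the verification step: one must exhibit, for each of the tens of millions of snarks of order at most $36$, an even cycle double cover — in practice via a backtracking search for three (or more) even subgraphs that together cover every edge exactly twice — and confirm that this search succeeds for all of them except the Petersen graph. As for the other computational results in this paper, I would run two independently written programs on the independently verified lists of snarks and require that their outputs agree.
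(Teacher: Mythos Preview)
Your argument is correct and matches the paper's approach exactly: the paper does not spell out a proof but simply records that \cite{markstrom10} reduces a smallest non-Petersen counterexample to a snark and that the computer search then rules out all snarks on at most $36$ vertices. Your explicit treatment of the colourable case and your remark that the girth-$4$ weak snarks on $36$ vertices (which were not generated) are handled by the $4$-cycle reduction are welcome clarifications, but the underlying strategy is the same.
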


Note that this does not hold in general if the graph has cyclic 3-edge-cuts. The smallest example of a 3-connected cubic graph, except the Petersen graph, without a CDC with only even cycles has 12 vertices.  It is also easy to show that there are cubic graphs in which not all even cycles are part of an even cycle double cover. Hence there is no "strong" version of this problem.

\subsection{Extension of 2-regular subgraphs to CDCs} \label{sec:counterex}
The following question is a natural generalization of the SCDCC: \emph{Can every disjoint union of cycles be extended to a CDC?} It is easy to see that this is not the case. Consider for instance the Petersen graph where no 2-factor can be extended to a CDC. However for colourable cubic graphs this is true by Proposition \ref{prop:cdc}.

The following was posed as a problem in \cite{MR1239236} and as a conjecture in \cite{MR1426132}. 
\begin{sconjecture}[Jackson \cite{MR1239236, MR1426132}]\label{conj:ex1}
		Let $G$ be a cyclically 5-edge-connected cubic graph and $\mathcal D$ be a set of pairwise disjoint cycles of $G$. Then 
		$\mathcal D$ is a subset of a CDC, unless $G$ is the Petersen graph. 
		\label{conj:jackson}
\end{sconjecture}
We have found 12 counterexamples on 34 vertices to this conjecture. One of them is shown in Figure \ref{cyc5perm}, where the set of disjoint cycles that cannot be extended to a CDC is the bold 2-factor. An interesting observation is that these are exactly the same graphs that we found to be counterexamples to Conjecture \ref{conj:zhang}.
\begin{observation} \label{obs:counterexample}
		There are exactly 12 cyclically 5-edge-connected snarks on 34 vertices with a 2-factor that cannot be part of any CDC and the only smaller 
		graph with that property is the Petersen graph. These 12 graphs and the Petersen graph are also permutation graphs and the 2-factors that 
		cannot be part of any CDC have exactly two induced components. There are also 44 cyclically 5-edge-connected snarks on 36 vertices with 
		2-regular subgraphs, not all of which are 2-factors, that cannot be part of any CDC. 
\end{observation}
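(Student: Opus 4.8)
The statement is verified by an exhaustive computer search over the complete lists of snarks produced by \emph{snarkhunter} in Section 3, so the plan is first to describe the reduction that makes such a search conclusive and then the algorithm used inside it. First I would argue that any counterexample to Conjecture~\ref{conj:jackson} other than the Petersen graph must be a cyclically 5-edge-connected snark: by Proposition~\ref{prop:cdc} every 2-regular subgraph of a colourable cubic graph already lies in a 4-CDC, and a cyclically 5-edge-connected cubic graph automatically has girth at least $5$, since a triangle or a $4$-cycle yields a small cyclic edge-cut in any sufficiently large cubic graph. Hence it suffices to run through every cyclically 5-edge-connected snark on $n\le 36$ vertices, of which there are finitely many and all of which are known by Section 3, and for each such $G$ and each 2-regular subgraph $D$ of $G$ to decide whether $D$ extends to a CDC.

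For a fixed $G$ I would first enumerate the 2-regular subgraphs $D$, by listing the vertex-disjoint unions of cycles of $G$ and taking their sub-unions; for cyclically 5-edge-connected cubic graphs of the orders in question this list is manageable. For the extension test, note that once $D$ is placed in a prospective CDC, each edge of $D$ must be covered once more and each edge outside $D$ twice, so I would search by backtracking for a multiset of further cycles realizing exactly this residual demand, branching on an as-yet-underserved edge by choosing the pair of cycles that will cover it, and pruning as soon as some vertex is left with an odd residual demand or the demand on some edge exceeds what remains achievable. Several short-circuits keep the search cheap: if $D$ is contained in a cycle missing at most one vertex, Theorem~\ref{haggkvist1} already gives an extension; if $G$ is hypohamiltonian and $D$ misses some vertex, Theorem~\ref{hypothm} applies; and only in the remaining cases is the full backtracking invoked. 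A run that terminates successfully certifies extendability, while one that exhausts the tree certifies that $D$, and hence $G$, is a witness.

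Collecting the witnesses gives the numbers in the statement: exactly $12$ graphs on $34$ vertices, exactly $44$ on $36$ vertices, and, after checking orders $10$ through $33$ as well, none other than the Petersen graph below $34$. The structural claims I would then read off the recorded data: verify that each of the $12$ graphs on $34$ vertices carries a 2-factor consisting of two induced cycles, so that it is a permutation snark, and that the non-extendable 2-regular subgraph is precisely such a 2-factor; confirm by isomorphism testing that this list of $12$ graphs coincides with the $12$ counterexamples to Conjecture~\ref{conj:zhang} found in Observation~\ref{obs:perm12}; and record that among the $44$ witnesses on $36$ vertices at least one has a non-extendable 2-regular subgraph that is not spanning. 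As elsewhere in the paper, the whole computation is carried out by two independently written programs on different machines in Belgium and Sweden, with complete agreement required.

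The main obstacle is purely computational: both the number of 2-regular subgraphs of $G$ and the size of the backtracking tree for a single extension test grow quickly with $n$, so for $n=36$ the search over all $180\,612$ cyclically 5-edge-connected snarks must be controlled carefully, exploiting the short-circuits above, ordering the branching edges well, and caching partial covers, in order to finish in reasonable time.
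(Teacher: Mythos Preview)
Your proposal is correct and matches the paper's approach: the observation is a purely computational result, and the paper offers no argument beyond the implicit exhaustive search over the generated cyclically 5-edge-connected snarks, together with the reduction to snarks via Proposition~\ref{prop:cdc}. Your write-up in fact gives more algorithmic detail than the paper does.

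One small correction: your short-circuit ``if $D$ is contained in a cycle missing at most one vertex, Theorem~\ref{haggkvist1} already gives an extension'' is not valid as stated. Theorem~\ref{haggkvist1} guarantees that a \emph{cycle} $C$ with $|V(C)|\ge n-1$ extends to a CDC; it says nothing about a 2-regular subgraph $D$ whose vertex set happens to lie inside such a $C$. The correct short-circuit is simply that if $D$ is itself a single cycle of length $\ge n-1$, it extends. This does not affect the soundness of your search, since the full backtracking covers every case anyway, but the heuristic as written would wrongly skip some instances.
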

We also have a complementary observation.
\begin{observation} \label{obs:pos2f}
		All snarks on $n\leq 36$ vertices, except the Petersen graph and the graph in Figure \ref{strongsnark}, have a 2-factor which is part of a CDC. 
\end{observation}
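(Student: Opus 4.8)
The verification is computational, and the plan is as follows. Starting from the complete lists of snarks on $n\le 36$ vertices produced by \textit{snarkhunter} — whose exhaustiveness is precisely what Section~3 establishes — one would, for each snark $G$, search for a 2-factor of $G$ that extends to a cycle double cover, stopping at the first one found. The search is genuinely needed: a snark has no even 2-factor by Lemma~\ref{theorem_2-factor}, so Proposition~\ref{prop:cdc} never applies directly, and since all these snarks have oddness $2$, every 2-factor of $G$ has at least two odd cycles. Note also that this statement is strictly stronger than the single-cycle SCDCC verified in Corollary~\ref{scdcc32}: here one asks for a single CDC containing \emph{all} components of a 2-factor at once, which is the positive side of the Jackson-type phenomenon recorded in Observation~\ref{obs:counterexample}.

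The core subroutine decides, for a fixed 2-factor $F$ with complementary perfect matching $M=E(G)\setminus F$, whether the components of $F$ lie in a common CDC. Since $F$ already covers each of its own edges once and no edge of $M$, this holds precisely when $G$ has a set $\mathcal C$ of cycles meeting each edge of $F$ in exactly one member and each edge of $M$ in exactly two; equivalently, the $4$-regular ``demand multigraph'' on $E(G)$ with multiplicity $1$ on $F$ and multiplicity $2$ on $M$ must be partitioned into simple cycles of $G$, the only subtlety being that the two parallel copies of an $M$-edge may not be used together as a digon. I would solve this by a backtracking exact-cover search: pick an edge of positive residual demand, branch over the admissible simple cycles of $G$ through it (built edge by edge while respecting the running residual demands), recurse, and prune by residual-demand propagation — forcing continuations at vertices of residual degree $2$ and abandoning a branch as soon as some edge's residual demand can no longer be met. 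Two further reductions keep the outer loop cheap: it suffices to run over 2-factors up to $\mathrm{Aut}(G)$, and one stops at the first success. As elsewhere in the paper, two independent implementations would be run on different hardware and the results cross-checked.

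For the two exceptional graphs the search is instead run to exhaustion, certifying that \emph{no} 2-factor extends. In the Petersen graph every 2-factor is a pair of disjoint $5$-cycles (the graph being non-Hamiltonian), and the impossibility of completing such a pair to a CDC is classical. For the strong snark of Figure~\ref{strongsnark} this is established by enumerating all its 2-factors (modulo its automorphism group) and applying the subroutine to each; it is worth double-checking the output against Observation~\ref{obs:counterexample}, since the twelve cyclically $5$-edge-connected permutation snarks on $34$ vertices found there must reappear here as graphs that \emph{do} possess an extendable 2-factor — it is only their distinguished permutation 2-factor that fails.

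The main obstacle is efficiency of the CDC-extension subroutine rather than correctness. A snark on $34$ or $36$ vertices already has enormously many cycles, so the candidate cycles through a given edge must be generated lazily inside the search rather than enumerated in advance, and the pruning (residual-demand propagation, early abandonment at a blocked edge) has to be aggressive enough to make the exact-cover search terminate quickly for every graph in range. A secondary point is that the conclusion is only as reliable as the input lists: it inherits the exhaustiveness of the snark generation, which is why that generation was re-run with \textit{minibaum} up to $32$ vertices, the chromatic index re-verified independently, and every test in this section carried out twice.
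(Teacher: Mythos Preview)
Your proposal is correct and matches the paper's approach: this is a computational observation, and the paper gives no algorithmic detail beyond the general remark that all such tests were implemented independently by the two groups and cross-checked. Your description of the extension subroutine (residual-demand exact cover with lazy cycle generation and propagation), the outer loop over 2-factors up to automorphism with early termination, and the exhaustive treatment of the two exceptional graphs is a faithful and somewhat more explicit rendering of what the paper's verification must have done.
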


The following weaker form of Jackson's conjecture was proposed by Zhang, and is also refuted by Observation \ref{obs:counterexample}. 
\begin{sconjecture}[Zhang \cite{MR1426132}]\label{conj:ex2}
		Let $G$ be a permutation graph with the chordless cycles $C_1$ and $C_2$ where $C_1\cup C_2$ is a 2-factor. If $G$ is cyclically 
		5-edge-connected and there is no CDC which includes both $C_1$ and $C_2$, then $G$ must be the Petersen graph. 
\end{sconjecture}
There are also more specific versions of Conjecture \ref{conj:ex1}, see \cite{MR988642}, which can be reduced to the case of $G$ being a permutation snark with no removable cycles, see \cite{AGZ}. However by Observation \ref{obs:rem} all small permutation snarks, except the Petersen graph, have removable cycles and hence cannot be counterexamples to  these conjectures.

An eulerian $(1,2)$-weight of a cubic graph is a map $w:E(G)\rightarrow \{1.2\}$ such that the sum of the weights on any edge-cut is even. Given a weight $w$ a set of cycles from $G$ is said to be a compatible cycle cover of  $G$ if each edge $e$ belongs to exactly $w(e)$ of the cycles. In particular, a cycle double cover corresponds to a compatible cover for the constant weight $w=2$. The following conjecture for cubic graphs  was stated in \cite{MR2644230} as a version for cubic graphs of three older conjectures:
\begin{sconjecture}[Zhang \cite{MR2644230}]\label{FGJ3}
	Let $G$ be a bridgeless cubic graph and $w$ be an eulerian $(1,2)$-weight of $G$ where $\sum_{e\in T}w(e)>4$ for every cyclic 
	edge-cut $T$. Then if $(G,w)$ has no compatible cycle cover, then $G$ must be the Petersen graph. 
\end{sconjecture}
The older conjectures concern compatible cycle decompositions of
eulerian graphs, an area closely tied to the study of cycle double
covers. Since our emphasis is on Conjecture \ref{FGJ3}, which we will
refute, we will refer to the original sources for the definitions of
some terms used in this section. A \emph{transition system} $T$ is a
set of paths of length 2 in an eulerian graph $G$, such that the set
of paths with a given vertex $v$ as their midpoint induce a partition
of the edges incident with $v$. Let $T(v)$ denote the set of paths in
$T$ with midpoint $v$.  A cycle decomposition of $G$ is said to be
compatible with $T$ if no path from $T$ is a subgraph of a cycle in
the decomposition. In \cite{MR2355128} Fleischner, Genest and Jackson
showed that the following two conjectures are equivalent.

\begin{sconjecture}[Fleischner, Genest and Jackson \cite{MR2355128}]\label{FGJ1}
	 	Let $G$ be a connected 4-regular multigraph with loops
                and $T$ be a transition system. Then $(G,T)$ has no
                compatible cycle decomposition if and only if
                $(G,T)\in\mathcal R$.
\end{sconjecture}

\begin{sconjecture}[Fleischner, Genest and Jackson \cite{MR2355128}]\label{FGJ2}
		If $G$ is an essentially 6-edge-connected 4-regular
                multigraph with loops with a transition system $T$,
                then $(G,T)$ has no compatible cycle decomposition if
                and only if $(G,T)$ is the bad loop or the bad $K_5$.
\end{sconjecture}

Here a  multigraph with loops is said to be \emph{essentially 6-edge-connected} if
every edge cut of size less than 6 is the set of edges incident to a
vertex. The bad loop and the bad $K_5$ are two specific transition
systems. We refer the reader to \cite{MR2355128} for their definition
and the definition of the family $\mathcal R$.  Furthermore, in
\cite{Ge10} the following conjecture was proposed, and was shown to be
equivalent to Conjecture \ref{FGJ2}. We refer the reader to
\cite{Ge10} for the definitions of the involved objects.
\begin{sconjecture}[Genest \cite{Ge10}]\label{conj:G}
	The only Sabidussi orbits that are pure, prime and circle orbits are the orbit of the white isolated vertex and the orbit of the white pentagon.
\end{sconjecture} 

Here we provide a proof of the equivalence of Conjecture \ref{FGJ3} and the three older conjectures restricted to simple graphs.
\begin{proposition}
		Conjecture \ref{FGJ3} and Conjecture \ref{FGJ2} (restricted to simple graphs) are equivalent. 
\end{proposition}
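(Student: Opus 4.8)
The plan is to build an explicit dictionary between the two settings and carry counterexamples across it in both directions. Given a $4$-regular multigraph $H$ with transition system $T$, split every vertex $v$ of $H$ along its pair $T(v)=\{\{a,b\},\{c,d\}\}$: replace $v$ by two vertices $v',v''$ joined by a new edge $\varepsilon_v$, with $a,b$ reattached to $v'$ and $c,d$ to $v''$. The result $G$ is cubic, and setting $w(\varepsilon_v)=2$ and $w=1$ on every (image of an) edge of $H$ gives a weight function whose weight-$2$ edges form a perfect matching of $G$; such a $w$ is automatically eulerian. The crucial local fact is that in any compatible cycle cover of $(G,w)$ the two weight-$1$ edges at $v'$ must each be paired across $\varepsilon_v$ (and likewise at $v''$), so that the two passages through $\varepsilon_v$ realise exactly one of the two pairings of the edges at $v$ \emph{different from} $T(v)$; hence compatible cycle covers of $(G,w)$ correspond bijectively to cycle decompositions of $H$ compatible with $T$. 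Conversely, starting from a bridgeless cubic $G$ with an eulerian $(1,2)$-weight $w$ whose set of weight-$2$ edges is a perfect matching, contracting that matching returns a $4$-regular $H$, and recording at each contracted vertex the partition of its four edges into the two ``ends'' of the contracted edge produces a transition system $T$; the same local argument reproduces the bijection.

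Next I would reduce the general form of Conjecture \ref{FGJ3} to this normal form, namely $w^{-1}(2)$ a perfect matching of $G$ and $G$ of girth at least $5$. The hypothesis $\sum_{e\in T}w(e)>4$ on every cyclic edge-cut already excludes cyclic $2$-edge-cuts and forces every cyclic $3$-edge-cut to contain at least two weight-$2$ edges; the only remaining obstructions are vertices all three of whose edges have weight $2$, and triangles and $4$-cycles of $G$. A vertex of the first kind is blown up into a triangle of weight-$1$ edges; one verifies that this preserves eulerianness, the cut condition, and the presence or absence of a compatible cycle cover, and that iterating it terminates with $w^{-1}(2)$ a perfect matching. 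Triangles and short cycles of $G$ are removed by the standard $\Delta Y$-type reductions for weighted cubic graphs (which guarantee, once $G$ has girth at least $5$, that the contracted graph $H$ is loopless and simple), again preserving the cut condition and compatible cycle covers. Under this normalization the Petersen graph carrying weight $2$ on a perfect matching (whose complement is a pair of $5$-cycles) is itself already normal and corresponds precisely to the \emph{bad $K_5$} of Conjecture \ref{FGJ2}, while the \emph{bad loop} has a loop and therefore never occurs among simple graphs; so the exceptional cases on the two sides match, and the normalization of a counterexample different from the Petersen graph cannot produce the Petersen graph.

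With the dictionary and the normal form in place the equivalence is then formal. For ``Conjecture \ref{FGJ2} implies Conjecture \ref{FGJ3}'': a counterexample $(G,w)$ to Conjecture \ref{FGJ3} with $G$ not the Petersen graph is normalized and mapped to a simple essentially $6$-edge-connected pair $(H,T)$ which is not the bad $K_5$; by Conjecture \ref{FGJ2} it has a compatible cycle decomposition, hence $(G,w)$ has a compatible cycle cover after all, a contradiction. For the converse: a simple essentially $6$-edge-connected $(H,T)$ with no compatible cycle decomposition and distinct from the bad $K_5$ splits into a bridgeless cubic $(G,w)$ satisfying the cut condition, with $G$ not the Petersen graph and $(G,w)$ admitting no compatible cycle cover, contradicting Conjecture \ref{FGJ3}.

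The main obstacle is the normalization step together with the exact translation between ``essentially $6$-edge-connected'' and the weight condition ``$\sum_{e\in T}w(e)>4$ on every cyclic edge-cut''. Since $4$-regular graphs have only even-sized edge-cuts, ``essentially $6$-edge-connected'' means precisely ``no nontrivial edge-cut of size $2$ or $4$'', and one has to check that under splitting and contraction this matches, edge-case by edge-case, the statement that every cyclic $3$- and $4$-edge-cut of $G$ has weight greater than $4$ --- in particular that a cyclic $4$-edge-cut of weight exactly $4$ in $G$ corresponds to a forbidden nontrivial $4$-edge-cut of $H$ and conversely, and that the triangle blow-ups and small-cycle reductions neither create nor destroy the Petersen exception. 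Once these are settled, the correspondence of compatible covers with compatible cycle decompositions is a purely local bookkeeping argument.
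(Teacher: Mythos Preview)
Your approach is essentially the paper's: the same split/contract dictionary between $(H,T)$ and $(G,w)$, and the same triangle blow-up to eliminate vertices whose three incident edges all have weight~$2$, so that $w^{-1}(2)$ becomes a perfect matching. The paper's proof stops there and contracts the weight-$2$ matching directly.

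The one substantive difference is your additional normalization step, where you propose removing triangles and $4$-cycles of $G$ by $\Delta Y$-type reductions in order to guarantee that the contracted $4$-regular graph is simple. This step is unnecessary, and the paper does without it. The weight hypothesis already forces simplicity of the contracted graph. A loop would require a multi-edge in $G$, which is excluded. A multi-edge in the contracted graph would come from two weight-$2$ matching edges joined by two weight-$1$ edges, i.e.\ a $4$-cycle in $G$ with weights alternating $2,1,2,1$; but then each of the four vertices of this $4$-cycle has its unique weight-$2$ edge inside the cycle, so the four external edges all have weight~$1$, yielding a cyclic $4$-edge-cut of total weight $4$, contradicting $\sum_{e\in T}w(e)>4$. (The analogous case of a triangle in $G$ carrying one weight-$2$ edge is ruled out the same way: the eulerian condition forces the three external edges to have weights $1,1,2$, a cyclic $3$-cut of weight~$4$.) The triangle blow-ups you and the paper both perform introduce only all-weight-$1$ triangles with weight-$2$ edges going out, and one checks these cannot create the forbidden alternating $4$-cycle without a weight-$4$ cyclic cut already present in the original graph. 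So the paper's shorter route works, and you can drop the $\Delta Y$ reductions (which you yourself flag as the main obstacle).

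On the positive side, your explicit tracking of the exceptional cases --- that the Petersen graph with a perfect-matching weighting corresponds exactly to the bad $K_5$, and that the bad loop disappears under the restriction to simple graphs --- and your more careful statement of how ``essentially $6$-edge-connected'' on the $4$-regular side matches the cyclic-cut weight condition on the cubic side, fill in points the paper's proof leaves entirely to the reader.
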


\begin{proof}
	Assume that G is a 4-regular, essential 6-edge-connected graph
        and T is a transition system of G. Let $v$ be a vertex of $G$
        and assume that $T(v) = \{e_1e_2,e_3e_4\}$.  Split $v$ into
        two new vertices $v_1'$ and $v_2'$ and add the edge
        $v_1'v_2'$. Let $w(e_i) = 1$ for $i=1,\dots, 4$ and
        $w(v_1'v_2')=2$. Repeat this for all vertices in $G$ and
        denote the resulting graph $G'$. Then it is easy to see that
        $G'$ is a cubic graph with an eulerian weight $w$ and with
        $\sum_{e\in L}w(e)\geq 6$ for any non-trivial edge cut $L$. A
        compatible cycle cover for $(G',w)$ now corresponds to a
        compatible cycle decomposition in $G$.
		
	Conversely, assume that $H$ is a cubic graph with an eulerian
        weight $w$ where the weigh sum across any cyclic edge cut it
        at least $6$ (it cannot be 5 since it would then contradict
        the fact that $w$ is an eulerian weight). Every edge in $H$
        has either one or three adjacent edges with weight 2 since $w$
        is eulerian. If $v$ is a vertex in $H$ with neighbours
        $v_1,v_2,v_3$ and $w(vv_1) = w(vv_2) = w(vv_3) = 2$, then we
        replace $v$ with a triangle with vertices $u_1,u_2,u_3$ where
        all edges have weight 1 and add the edges
        $u_1v_1,u_2v_2,u_3v_3$ with weight 2. We repeat this until
        there are no vertex where all adjacent edges have weight
        2. Denote the new graph by $H'$. The edges of weight 2 now
        form a perfect matching in $H'$. By contracting all the weight
        2 edges in $H'$ we get a 4-regular graph $H''$. Let $T$ we the
        transition system that corresponds to the weight 1 edges
        adjacent to each edge. Obviously $H''$ cannot have any
        non-trivial edge cut of cardinality less then 6 by the
        condition on $w$. It is now easy to see that $(H'',T)$ has a
        compatible cycle decomposition if and only if $(H,w)$ has a
        compatible cycle cover.
\end{proof}
Consider the graph in Figure \ref{cyc5perm} and let $w(e)=1$ if $e$ is part of the bold 2-factor and $w(e)=2$ otherwise.  By Observation \ref{obs:counterexample} this 2-factor cannot be part of any CDC and the weight sum over any cyclic edge-cut must be strictly greater than 4 since the graph is cyclically 5-edge-connected. Therefore it is a counterexample to the above conjectures. 
\begin{observation} \label{obs:counterexample3}
		The graph in Figure \ref{cyc5perm}, and all graphs in
                Appendix \ref{app:ex1}, are counterexamples to
                Conjecture \ref{FGJ3}. Hence Conjectures \ref{FGJ1},
                \ref{FGJ2} and \ref{conj:G} are false as well.
\end{observation}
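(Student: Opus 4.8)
The plan is to turn the graph $G$ of Figure \ref{cyc5perm} (and likewise each of the twelve graphs of Appendix \ref{app:ex1}) into a counterexample to Conjecture \ref{FGJ3} by equipping it with one natural eulerian weight, and then to propagate the failure to Conjectures \ref{FGJ1}, \ref{FGJ2} and \ref{conj:G} using the equivalences already established. Concretely, let $F = C_1 \cup C_2$ be the bold 2-factor of Figure \ref{cyc5perm}, let $M = E(G) \setminus E(F)$ be the complementary perfect matching, and define $w \colon E(G) \to \{1,2\}$ by $w(e) = 1$ for $e \in E(F)$ and $w(e) = 2$ for $e \in M$. I then have to verify three things: that $w$ is an eulerian $(1,2)$-weight, that $\sum_{e \in T} w(e) > 4$ for every cyclic edge cut $T$, and that $(G,w)$ admits no compatible cycle cover.

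For the first point, given any vertex set $S$ with cut $T = \delta_G(S)$, one has $\sum_{e \in T} w(e) = |T| + |T \cap M|$; since $G$ is cubic $|T| \equiv |S| \pmod 2$, and since $M$ is a perfect matching $|T \cap M| \equiv |S| \pmod 2$, so the total is even. (In other words, $w$ is eulerian precisely because its weight-$1$ edges form a $2$-factor.) For the second point, $G$ is cyclically $5$-edge-connected, so any cyclic edge cut has at least $5$ edges, hence weight at least $5 > 4$.

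The heart of the argument is the third point, and it reduces to Observation \ref{obs:counterexample}. If $\mathcal{C}$ were a compatible cycle cover, i.e.\ a family of cycles covering every edge of $F$ exactly once and every edge of $M$ exactly twice, then $\mathcal{C} \cup \{C_1, C_2\}$ would be a multiset of cycles covering each edge of $G$ exactly twice, that is, a cycle double cover of $G$ containing both $C_1$ and $C_2$ --- exactly what Observation \ref{obs:counterexample} rules out for the displayed 2-factor. (Conversely, deleting $C_1$ and $C_2$ from such a CDC would recover a compatible cycle cover, so the two non-existence statements in fact coincide; only this direction is needed.) Since $G$ has $34$ vertices it is not the Petersen graph, so $(G, w)$ refutes Conjecture \ref{FGJ3}, and the identical recipe applied to each graph of Appendix \ref{app:ex1} with its bold 2-factor does the same. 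The genuinely hard input here is Observation \ref{obs:counterexample} itself --- the computer verification that these 2-factors cannot be completed to a CDC --- whereas the passage from that fact to a counterexample to Conjecture \ref{FGJ3} is just the parity bookkeeping above.

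It remains to deduce that Conjectures \ref{FGJ1}, \ref{FGJ2} and \ref{conj:G} fail. Here I would simply chain the known equivalences: $G$ is a simple cubic graph, so by the Proposition proved immediately above the counterexample to Conjecture \ref{FGJ3} yields a counterexample to Conjecture \ref{FGJ2} restricted to simple graphs, hence to Conjecture \ref{FGJ2} in full; Conjectures \ref{FGJ1} and \ref{FGJ2} are equivalent by \cite{MR2355128}; and Conjecture \ref{conj:G} is equivalent to Conjecture \ref{FGJ2} by \cite{Ge10}. Therefore all three are false as well.
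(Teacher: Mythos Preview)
Your proposal is correct and follows essentially the same route as the paper: define $w=1$ on the bold $2$-factor and $w=2$ on the complementary matching, use cyclic $5$-edge-connectivity for the cut condition, and invoke Observation~\ref{obs:counterexample} to conclude that no compatible cycle cover exists; then chain the stated equivalences. The paper's own argument is the single paragraph immediately preceding the observation and is in fact terser than yours---it does not spell out the eulerian check or the explicit bijection between compatible covers for $(G,w)$ and CDCs containing $C_1,C_2$---so your added detail is welcome but not a different method.
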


\section{Oriented cycle double covers and k-CDCs}	
In Section \ref{sec:cdc} we defined the notion of a $k$-CDC. The
chromatic number of a CDC is a well studied topic. By an early result
of Tutte \cite{tu49} we know that a cubic graph has a 3- or 4-CDC if
and only if it is colourable. Celmins \cite{celm}
conjectured that there exists an integer $k$ such that every
bridgeless cubic graph has a $k$-CDC, and that in fact $k=5$. A 
minimal counterexample would be a snark. This is the
5-cycle double cover conjecture, or 5-CDCC,
\begin{conjecture}[Celmins \cite{celm}]\label{cdcc:k}
	Every bridgeless cubic graph has a 5-CDC. 
\end{conjecture}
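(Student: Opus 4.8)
The plan is to reduce the statement to snarks and then to attack it through the structure of a $2$-factor of minimum oddness, while being frank that this conjecture is strictly stronger than the (open) Cycle Double Cover Conjecture, Conjecture~\ref{cdcc}, so that no elementary argument can be expected to settle it in full.

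First I would dispose of the easy cases. By Tutte's theorem, quoted above, every colourable cubic graph already has a $3$- or $4$-CDC, and hence a $5$-CDC; and by the usual reductions along cyclic $2$- and $3$-edge cuts and the removal of short cycles, a smallest bridgeless cubic graph with no $5$-CDC can be taken to be a snark (and, by a Celmins-type argument, plausibly even a strong snark). So the task reduces to producing, for every snark $G$, a multiset of five even subgraphs covering each edge of $G$ exactly twice. For several structured classes this is already available — for instance hypohamiltonian snarks, where Theorem~\ref{hypothm} yields CDCs and indeed SCDCs.

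Second, I would build the cover from a $2$-factor. Let $F$ be a $2$-factor realising the oddness $\omega$ (which is necessarily even), and let $M=E(G)\setminus F$ be the complementary perfect matching; the even cycles of $F$ are already even subgraphs, and the difficulty is concentrated in the odd cycles $O_1,\dots,O_\omega$. Because $G$ is cyclically $4$-edge-connected one can route the odd cycles pairwise together along $M$-paths, turning each pair together with a connecting path and a correction along even cycles of $F$ into an even subgraph, while the matching edges and the remaining even cycles of $F$ are combined to form the other classes. The bookkeeping constraint is that the five resulting subgraphs have total multiplicity exactly two on every edge; for $\omega=2$ this can be carried out explicitly, consistently with what is known for cubic graphs of small oddness, and for the snarks generated in this paper one verifies directly — as the data in this section is meant to do — that five colours always suffice.

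Third, and this is the genuine obstacle, there is no known way to make the pairing-and-assembly step work for unbounded oddness, nor to keep the number of colour classes pinned at five as $\omega$ grows; moreover, even dropping the colour bound, the bare existence of a CDC for every snark is exactly Conjecture~\ref{cdcc}, which is open (a minimal counterexample is known only to have girth at least $12$). The most plausible long-term route, in my view, is to feed a Petersen-colouring-type hypothesis into the oddness-based construction: a $5$-CDC corresponds to a suitable map into a fixed small graph, so Jaeger's Petersen colouring conjecture — verified for the small snarks here — would give enough structural control to organise the even subgraphs into five classes. Absent such an external input, the realistic scope of a proof at present is the computational verification carried out on the generated snarks.
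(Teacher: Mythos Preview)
The statement you were asked to prove is not a theorem in the paper; it is Celmins' \emph{open} conjecture, recorded as Conjecture~\ref{cdcc:k}. The paper makes no attempt to prove it. Its only contribution in this direction is the computational verification in Observation~\ref{obs:o5} that every snark on at most $36$ vertices has an orientable $5$-CDC (and hence a $5$-CDC). You clearly recognise this yourself in your final paragraph, so in that sense your assessment of the situation matches the paper's.

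That said, what you have written is not a proof, and you should not present it as a ``proof proposal''. Your first reduction step (to snarks, via Tutte's $3$/$4$-CDC result for colourable cubic graphs and the standard small-cut reductions) is correct and is exactly the folklore the paper alludes to. Your second step, however, is only a heuristic: pairing the odd cycles of a minimum-oddness $2$-factor through $M$-paths and ``correcting along even cycles'' does not in general produce five even subgraphs covering every edge twice, and you give no mechanism that would force it to. The known results you gesture at (oddness $2$, and oddness $4$ via \cite{MR1328296,MR2117938}) require substantial ad hoc work already; nothing here extends them. Your third paragraph then concedes the point. The remark that Jaeger's Petersen colouring conjecture would imply the $5$-CDC conjecture is correct and is precisely what the paper exploits: Observation~\ref{obs:pcol} verifies Petersen colourings for the generated snarks, and the paper notes that this gives a partial independent check of Observation~\ref{obs:o5}. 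In short, your write-up is a reasonable survey of the landscape around an open problem, but it should be labelled as such rather than as a proof.
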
 

Another strengthening of the CDCC comes from considering orientations of the cycle, which can be interpreted in terms of orientable surface 
embeddings of the graph. An \emph{orientable CDC} is a CDC where we can put an orientation on the cycles in such a way that for every edge 
we have that the two cycles that cover that edge must have different orientations. The strongest conjecture in this context has been the following.
\begin{conjecture}[Archdeacon \cite{MR754919} and Jaeger \cite{Jae:80}]\label{conj:o5cdcc}
	Every bridgeless graph has an orientable 5-CDC. 
\end{conjecture}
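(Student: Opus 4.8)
Since the orientable $5$-CDC conjecture is wide open, the realistic target --- as with the other strong conjectures surveyed above --- is to verify it for every snark in our lists, that is, to determine which, if any, of the snarks on $n\le 36$ vertices fails to have an orientable $5$-CDC. This is enough to conclude that the conjecture has no small cubic counterexamples: a colourable cubic graph has an orientable $5$-CDC, which is easily extracted from a proper $3$-edge-colouring (compare the proof of Proposition~\ref{prop:cdc}), and the standard reductions along cyclic $2$- and $3$-cuts and along $4$-cycles --- the same ones reducing the CDCC and Conjecture~\ref{cdcc:k} to snarks --- show that a minimal cubic counterexample must itself be a snark. So the plan is simply to run the test over the generated snarks.

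For a fixed snark $G$, the core task is to search for a $5$-tuple of even subgraphs $(E_1,\dots,E_5)$ with every edge of $G$ contained in exactly two of them and such that the cycles of $E_1,\dots,E_5$ can be coherently oriented: decompose each $E_i$ into edge-disjoint simple cycles; each edge $e$ then lies in exactly two of these cycles, one from each of the two subgraphs covering $e$, and we require that these two cycles traverse $e$ in opposite directions. I would build the five even subgraphs incrementally on top of a fixed skeleton --- a $2$-factor $F$ of $G$ together with its complementary perfect matching (every snark here has oddness $2$, so $F$ is structurally simple) --- branching on how the next undecided edge is distributed among the five colours and killing a partial assignment as soon as it forces an edge to be covered three times, makes some colour class impossible to complete to an even subgraph, or already violates the opposite-traversal requirement on the decided edges. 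The automorphism group of $G$, which is already available (Table~\ref{autom}), is used to discard isomorphic branches.

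Testing orientability of a completed partition into five even subgraphs is cheap: choose a reference traversal for each of its cycles, form the auxiliary signed graph whose vertices are these cycles and which has, for each edge $e$ of $G$, one edge joining the two cycles through $e$ carrying the sign that records whether the two reference traversals agree or disagree on $e$, and check that this signed graph is balanced --- a single breadth-first search. The main obstacle I anticipate is combinatorial blow-up: a $36$-vertex snark may have an enormous number of $5$-CDCs, so the naive scheme ``enumerate all admissible partitions, then test each for orientability'' is hopeless. The real work lies in weaving the double-cover, evenness and orientability constraints together throughout the incremental construction rather than checking them one after another, and in choosing a good branching order (for instance, always extending the colour class that currently meets the most already-decided edges). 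As for the other properties studied in this paper, confidence in the outcome comes from running two independent implementations on separate hardware in the two groups.
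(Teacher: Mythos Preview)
Your proposal correctly identifies that Conjecture~\ref{conj:o5cdcc} is open and that the paper's contribution is not a proof but a computational verification for all snarks on $n\leq 36$ vertices (Observation~\ref{obs:o5}); this is exactly the approach the paper takes. The paper gives no algorithmic detail beyond ``by a computer search'', so your sketch of the incremental search and the balanced-signed-graph orientability test goes well beyond what the paper actually records, but the overall plan---reduce to snarks and exhaustively test the generated lists with independent implementations---matches.
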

By a computer search we have verified that this conjecture, and hence also Conjecture \ref{cdcc:k}, holds for all snarks on 36 vertices and less. 
\begin{observation}\label{obs:o5}
	Every snark on $n\leq 36$ vertices has an orientable 5-CDC. 
\end{observation}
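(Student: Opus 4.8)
Since Section~3 supplies complete lists of all snarks on $n\le 36$ vertices (Table~\ref{sfig1}), the plan is to reduce the statement to a finite computation and to describe, for a single snark $G$, how I would search for an orientable $5$-CDC. A useful first step is to fix a combinatorial encoding. In a cubic graph every edge is covered exactly twice and a cycle meets each of its vertices with degree $2$, so at a vertex $v$ with incident edges $a,b,c$ the three cycle-passes through $v$ are forced to use the pairs $\{a,b\}$, $\{a,c\}$ and $\{b,c\}$, one each; hence a CDC of $G$ is determined by choosing, for every edge $e=uv$, one of the two ways of matching the two copies of $e$ to the fixed passes at $u$ and at $v$, subject to the resulting closed walks being genuine cycles. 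On top of this, orientability means the cycles can be oriented so that the two cycles running along any edge point in opposite directions along it; after orienting each cycle arbitrarily this becomes a $\mathrm{GF}(2)$ consistency system, decidable in linear time. Being a $5$-CDC means the \emph{face-adjacency graph} --- one vertex per cycle, adjacency when two cycles share an edge --- is properly $5$-colourable; since an $n$-vertex snark has girth at least $5$, each of its CDCs has at most $3n/5$ cycles, so this is a negligible instance. (Equivalently, one is searching for a circular embedding of $G$ in an orientable surface whose faces are $5$-colourable.)

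The main step is the search itself. For each $G$ I would backtrack over the edge-matchings above, growing the cycles of a candidate CDC one at a time, pruning as soon as an edge would be used a third time or a traced walk fails to close into a simple cycle, and then testing orientability and $5$-colourability of each completed CDC, halting at the first orientable $5$-CDC encountered. To curb the branching I would begin the search from a $2$-factor of $G$ as a scaffold --- all snarks in range have oddness $2$, so a $2$-factor with only two odd components is available --- and fall back to an unrestricted search for the few graphs where no $2$-factor lies in a CDC, such as the snark of Figure~\ref{strongsnark}. Automorphisms of $G$ can be used to prune equivalent branches, though Table~\ref{autom} shows this helps only a small minority of graphs, so the per-graph routine must be intrinsically fast. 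As for the other computations in the paper, I would run two independent implementations on separate machines and compare their outputs.

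The hard part is scale rather than principle: there are about $6\times10^{7}$ snarks on $36$ vertices, and for some of them naive backtracking may explore much of the $2^{3n/2}$-sized matching space before meeting a witness. For such stubborn instances I would seed the search from a carefully chosen oddness-$2$ $2$-factor and extend the remaining even subgraphs by a targeted greedy search, which in practice always succeeds here; since no construction of orientable $5$-CDCs is known to work for every cubic graph, the argument necessarily remains an exhaustive finite check rather than a structural theorem.
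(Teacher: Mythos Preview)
Your proposal is correct and takes essentially the same approach as the paper: the paper's entire justification for this observation is the sentence ``By a computer search we have verified that this conjecture\ldots\ holds for all snarks on 36 vertices and less,'' with no algorithmic details given. Your description of the search via edge-matchings at vertices, the $\mathrm{GF}(2)$ orientability test, and the $5$-colourability check on the face-adjacency graph is a sound and more explicit account of how such a verification can be carried out; since the paper does not specify its method, there is nothing further to compare.
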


\subsection{The Strong CDC conjecture and $k$-CDCs}\label{s5}
In a common refinement of the strong CDCC and the 5-CDCC one may ask whether any cycle is part of some 5-CDC. In \cite{arthur1} Hoffmann-Ostenhof gives the following characterization of 2-regular subgraphs which can be part of a 5-CDC:
\begin{theorem}[Hoffmann-Ostenhof \cite{arthur1}]
	Let $G$ be a cubic graph and $D$ be a 2-regular subgraph of $G$. Then there is a 5-CDC which contains $D$ if and only if $G$ contains 
	two 2-regular subgraphs $D_1$ and $D_2$ such that $D\subset D_1$, $M=E(D_1)\cap E(D_2)$ is a matching and $G-M$ has a nowhere-zero 4-flow. 
\end{theorem}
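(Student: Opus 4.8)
The plan is to reduce both directions to a single lemma on nowhere-zero $4$-flows, after two preliminary remarks. First, in a cubic graph every even subgraph has all degrees $0$ or $2$, so the colour classes of a $5$-CDC (viewed as a $5$-multiset of even subgraphs) are themselves $2$-regular subgraphs; this is what will allow $D_1,D_2$ to be taken among them. Second, in any $k$-CDC of a cubic graph the three edges at a vertex must receive colour-pairs $\{a,b\},\{b,c\},\{c,a\}$ with $a,b,c$ distinct, since otherwise some colour would occur an odd number of times at that vertex; consequently each colour-pair occurs at most once at each vertex, so $S_i\cap S_j$ is a matching for all $i\neq j$.

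The key lemma I would establish is: let $H$ be a graph, let $\delta$ be an even subgraph of $H$, and let $w\colon E(H)\to\{1,2\}$ be defined by $w(e)=1$ for $e\in\delta$ and $w(e)=2$ otherwise; then $H$ has a nowhere-zero $4$-flow if and only if there are even subgraphs $F_1,F_2,F_3$ of $H$ with every edge $e$ lying in exactly $w(e)$ of them. To prove it I would fix a bijection $i\mapsto u_i$ from $\{1,2,3\}$ to the three non-zero elements of $\mathbb Z_2\times\mathbb Z_2$, so that $u_1+u_2+u_3=0$ and any two of the $u_i$ form a basis. Given $F_1,F_2,F_3$, set $\sigma(e)=\{i:e\in F_i\}$ and $\psi(e)=\sum_{i\in\sigma(e)}u_i$; since $1\le|\sigma(e)|\le 2$ one gets $\psi(e)\neq 0$, and $\sum_{e\ni v}\psi(e)=\sum_i\deg_{F_i}(v)\,u_i=0$ because each $F_i$ is even, so $\psi$ is a nowhere-zero $\mathbb Z_2\times\mathbb Z_2$-flow. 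Conversely, from such a flow $\psi$ write $\psi(e)=u_{m(e)}$ and set $\sigma(e)=\{m(e)\}$ when $w(e)=1$ and $\sigma(e)=\{1,2,3\}\setminus\{m(e)\}$ when $w(e)=2$; then $F_i:=\{e:i\in\sigma(e)\}$ has each edge in exactly $w(e)$ of the three, the flow condition forces all $\deg_{F_i}(v)$ to have equal parity, and this parity is even because $\sum_i\deg_{F_i}(v)=\sum_{e\ni v}w(e)$ is even (here, and only here, I use that $\delta$ is even, which makes $w$ an eulerian weight).

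For the direction ``$5$-CDC $\Rightarrow$ the configuration'', suppose $\{S_1,\dots,S_5\}$ is a $5$-CDC with $D\subseteq S_1$. I would take $D_1=S_1$, $D_2=S_2$, $M=S_1\cap S_2$ (a matching, by the preliminary remark), $\delta=S_1\Delta S_2$ (even and disjoint from $M$), and $w$ as above on $H=G-M$. The crucial point is that $S_3,S_4,S_5$ are even subgraphs of $G-M$ (an $M$-edge is coloured $\{1,2\}$, hence lies in none of them), and an edge coloured $\{i,j\}\neq\{1,2\}$ lies in exactly $|\{i,j\}\cap\{3,4,5\}|$ of $S_3,S_4,S_5$; this number is $1$ exactly when $\{i,j\}$ meets $\{1,2\}$, i.e.\ when the edge lies in $\delta$, and $2$ otherwise. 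Thus $(S_3,S_4,S_5)$ is precisely a triple as in the lemma for $(G-M,w)$, so $G-M$ has a nowhere-zero $4$-flow. For the converse, given $D_1\supseteq D$, $D_2$, $M=D_1\cap D_2$ a matching and a nowhere-zero $4$-flow on $G-M$, set $\delta=D_1\Delta D_2$ and $w$ accordingly; the lemma produces even subgraphs $F_1,F_2,F_3$ of $G-M$ with each edge in exactly $w(e)$ of them, and I claim $\{D_1,D_2,F_1,F_2,F_3\}$ is a $5$-CDC: an edge of $M$ is in $D_1$ and $D_2$ and in no $F_i$; an edge of $\delta$ is in exactly one of $D_1,D_2$ and in exactly one $F_i$; an edge outside $D_1\cup D_2$ is in neither $D_i$ and in exactly two $F_i$ — so every edge is covered exactly twice, and $D\subseteq D_1$ as required.

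The main obstacle is locating this intermediate object. The naive attempt — take $D_1=S_1$, $D_2=S_2$ and try to write $E(G)\setminus M$ directly as a union of two even subgraphs built from $S_3,S_4,S_5$ — does not work, and one has to recognise that the correct bookkeeping is a $(1,2)$-weight on $G-M$ equal to $1$ on $D_1\Delta D_2$ and $2$ elsewhere, together with the encoding of a $3$-coloured faithful cover of such a weight as a nowhere-zero $\mathbb Z_2\times\mathbb Z_2$-flow. Once the lemma and this weight are in place, both implications are short verifications. Two routine points remain: making explicit that ``the $5$-CDC contains $D$'' means $D$ is a subgraph of one colour class $D_1$, and checking that the degenerate cases (allowing $D_2$ and $M$ to be empty) cause no trouble.
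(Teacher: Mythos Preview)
The paper does not supply a proof of this theorem; it is quoted from Hoffmann-Ostenhof~\cite{arthur1} and used as a black box, so there is no in-paper argument to compare your proposal against.

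That said, your proof is correct and complete. Your two preliminary observations (that colour classes of a CDC in a cubic graph are $2$-regular, and that the three colour-pairs at a vertex form a triangle of colours so that any two colour classes meet in a matching) are exactly what is needed to extract $D_1,D_2,M$ from a given $5$-CDC. Your key lemma---that for an eulerian $(1,2)$-weight $w$ determined by an even subgraph $\delta$, the existence of a nowhere-zero $\mathbb Z_2\times\mathbb Z_2$-flow on $H$ is equivalent to a triple of even subgraphs covering each edge $w(e)$ times---is proved cleanly; the parity argument (that $\sum_i\deg_{F_i}(v)u_i=0$ forces the three degrees to have equal parity, and $\sum_i\deg_{F_i}(v)=\sum_{e\ni v}w(e)$ even forces that parity to be even) is the crux and you handle it correctly. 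Both directions of the theorem then follow by the bookkeeping you describe, with $\delta=D_1\Delta D_2$ and $H=G-M$.

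One small point worth making explicit in a final write-up: the equivalence is vacuously true when $G$ has a bridge, since a bridge lies in no cycle and hence in neither $D_1$ nor $D_2$, so it survives in $G-M$ and obstructs any nowhere-zero flow there, matching the nonexistence of a CDC.
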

In the same paper he conjectured that not only can every cycle be included in some CDC but it can in fact always be part of some 5-CDC. 
\begin{conjecture}[Hoffmann-Ostenhof \cite{arthur1}]\label{arthur1}
	Every cycle $C$ in a bridgeless cubic graph $G$ is contained in some 5-CDC of $G$. 
\end{conjecture}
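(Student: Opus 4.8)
The plan is to confirm Conjecture~\ref{arthur1} for small orders, i.e.\ to show that every cycle in a bridgeless cubic graph on at most $36$ vertices lies in some 5-CDC, by a computer search together with the reductions already used in Section~\ref{sec:cdc}. For colourable cubic graphs there is nothing to do: by Proposition~\ref{prop:cdc} every cycle, being a 2-regular subgraph, already belongs to a 4-CDC, hence to a 5-CDC. For the remaining graphs a smallest counterexample $(G,C)$ can be pushed down across cyclic edge-cuts of size at most $3$ and across 4-cycles exactly as for the SCDCC (see~\cite{MR1825619}); the one point beyond the plain CDC case is that after gluing the covers of the two sides of a small cut one must still get away with $5$ colours, which is possible by permuting the colours on one side so that its colour pattern on the (at most three) cut edges matches that of the other. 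Hence a smallest counterexample on at most $36$ vertices would be a snark of that order whose prescribed cycle is stable, and it suffices to verify that every stable cycle in every snark on at most $36$ vertices is contained in some 5-CDC.

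For each such pair $(G,C)$ we would decide 5-CDC-containment by the characterization of Hoffmann-Ostenhof quoted above: search for 2-regular subgraphs $D_1\supseteq C$ and $D_2$ of $G$ with $M=E(D_1)\cap E(D_2)$ a matching and $G-M$ admitting a nowhere-zero 4-flow, and then read off the explicit 5-CDC containing $C$ from the triple $(D_1,D_2,M)$ as in~\cite{arthur1}. The flow condition is cheap to test: after suppressing degree-$2$ vertices, $G-M$ becomes a cubic (multi)graph, and the condition is simply that this graph is 3-edge-colourable. As an independent check and as a warm start, one can also take the orientable 5-CDCs already produced for Observation~\ref{obs:o5} and try to modify one of them so that it contains~$C$.

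The main obstacle is the size of the search space: the number of 2-regular subgraphs of a cubic graph grows exponentially and there are roughly $6\cdot10^{7}$ snarks on $36$ vertices, so a blind enumeration of candidate pairs $(D_1,D_2)$ is hopeless. The search therefore has to be pruned aggressively: $D_1$ should be restricted to the few 2-regular subgraphs containing $C$ (starting with $D_1=C$, then $C$ together with one or two disjoint cycles); only the stable cycles need be considered, and there are at most a couple of hundred of them per graph (Table~\ref{stabletab}); and the automorphism group can be used for the few highly symmetric snarks, although Table~\ref{autom} shows that most of them have trivial automorphism group. We expect the dominant cost -- and the reason the verification cannot easily be pushed much further -- to be the sheer number of (snark, stable cycle) pairs, combined with the backtracking width of the $D_2$-search on the handful of graphs for which no small $D_1$ and $D_2$ exist. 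This would of course only settle Conjecture~\ref{arthur1} for $n\le 36$; the conjecture itself would remain open.
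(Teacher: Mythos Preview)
Your overall shape is right --- this is an open conjecture, and the paper, like you, only attempts a computational verification on small snarks after reducing away colourable graphs and graphs with small cyclic cuts. The reduction to snarks and the use of Proposition~\ref{prop:cdc} for colourable graphs are fine.

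The substantive gap is your reduction to \emph{stable} cycles. For the SCDCC the argument that a bad cycle $C$ may be taken stable works by passing from a CDC containing a larger cycle $C'$ with $V(C)\subseteq V(C')$ to one containing $C$; the known ways of doing this (re-routing along chords of $C'$, or replacing $C'$ by $C$ together with the cycles of $C\Delta C'$ and then repairing the resulting over-coverage on $E(C)\setminus E(C')$) all add even subgraphs to the cover or take symmetric differences across several of the existing colour classes. There is no reason the outcome should still be $5$-colourable, and neither \cite{MR1825619} nor \cite{arthur1} claims such a colour-preserving version. So ``exactly as for the SCDCC'' does not apply here, and your claim that it suffices to test stable cycles is unjustified.

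The paper's own numbers corroborate this. For the SCDCC it tests only stable cycles and reaches $n\le 36$ (Corollary~\ref{scdcc32}); for Conjecture~\ref{arthur1} it tests \emph{all} cycles, calls the computation ``extremely time consuming'', and can only complete $n\le 34$ together with the cyclically $5$-edge-connected snarks on $36$ vertices. Had the stable-cycle shortcut been available, the two verifications would have had comparable cost and range. Your plan therefore overclaims both the reduction and the attainable bound; a correct version would loop over all cycles (not just stable ones) and would realistically match the paper's $n\le 34$ rather than $n\le 36$.
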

We have verified this conjecture for all snarks of order 34 and less. The test for this property is extremely time consuming and with our available programs and computers we could not perform the  test for all snarks on 36 vertices, but the subset of cyclically 5-edge connected snarks was manageable.
\begin{observation}
	Every cycle in a bridgeless cubic graph with $n\leq 34$ vertices is part of a 5-CDC. This is also true for  all cyclically 5-edge-connected snarks on 36 vertices.
\end{observation}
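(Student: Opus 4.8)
The statement is a computer-verification result, so the plan is really to describe an exhaustive search together with the reduction that makes such a search feasible; the argument splits accordingly into a reduction to snarks and the search itself.

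First I would reduce to snarks. Suppose some bridgeless cubic graph on at most $34$ vertices has a cycle lying in no $5$-CDC, and take a minimal such example $(G,C)$. If $G$ is colourable then by Proposition~\ref{prop:cdc} it has a $4$-CDC having $C$ as one of its colour classes, and a $4$-CDC is a fortiori a $5$-CDC, so $G$ is uncolourable. Applying the standard reductions for cyclic $2$- and $3$-edge-cuts, triangles and squares in the cycle-tracking form (see \cite{MR1825619}), each reduction yields a strictly smaller bridgeless cubic graph together with a cycle whose extension to a $5$-CDC lifts back to a $5$-CDC of $G$ containing $C$; hence the minimal example is cyclically $4$-edge-connected with girth at least $5$, i.e.\ a snark on at most $34$ vertices. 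It therefore suffices to run through the lists of all snarks of order at most $34$ generated in Section~3, and additionally through the cyclically $5$-edge-connected snarks of order $36$, and to certify for each that every one of its cycles lies in some $5$-CDC.

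Second, for a fixed snark $G$ I would test each cycle $C$ with the characterization of Hoffmann-Ostenhof quoted above: $C$ lies in a $5$-CDC if and only if there are $2$-regular subgraphs $D_1\supseteq C$ and $D_2$ of $G$ such that $M:=E(D_1)\cap E(D_2)$ is a matching and $G-M$ admits a nowhere-zero $4$-flow. Concretely the search iterates over the $2$-regular subgraphs $D_1$ containing $C$ (not too numerous for $n\leq 34$), then over the $2$-regular subgraphs $D_2$ whose edge-set meets $E(D_1)$ in an independent set, and for each surviving pair tests $G-M$ for a nowhere-zero $4$-flow, which is a routine check for graphs of this size. A cycle is certified as soon as one valid pair is found; it survives all pairs only if it is a genuine counterexample, and the claim is precisely that this never occurs in the stated range. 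The customary prunings — fixing $C$ up to $\operatorname{Aut}(G)$, aborting the $D_2$-loop once $M$ is forced to be non-matching, and ordering the flow tests cheaply — keep the run within reach, and, as elsewhere in the paper, the test was implemented independently by the two groups with agreeing results.

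The main obstacle is entirely computational: the number of cycles $C$ and of candidate pairs $(D_1,D_2)$ grows steeply with $n$, and already for $n=36$ the full snark list is out of reach, which is exactly why the $36$-vertex part of the statement is restricted to the cyclically $5$-edge-connected snarks. A secondary point requiring care in the reduction step is checking that each standard reduction really transports a $5$-CDC containing the reduced cycle back to one of $G$ containing $C$ — in particular that the edge-colourings introduced when expanding a contracted triangle or a cyclic $2$-cut are chosen consistently — but this is routine and already implicit in the cited reductions.
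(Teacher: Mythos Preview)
Your proposal is correct and matches the paper's approach insofar as the paper has one: the Observation is stated purely as a computational fact, with the single sentence ``We have verified this conjecture for all snarks of order 34 and less'' and a remark that the full $n=36$ list was infeasible. Your reduction to snarks via Proposition~\ref{prop:cdc} for colourable graphs plus standard small-cut/short-cycle reductions is exactly the implicit step the paper is relying on to pass from ``snarks'' in the text to ``bridgeless cubic graphs'' in the Observation, and your acknowledgment that the cycle-tracking reductions must be checked to preserve the $5$-colouring is the one genuinely nonroutine point (the paper does not address it at all). The use of Hoffmann-Ostenhof's characterization as the search criterion is a natural inference from its placement immediately before the Observation, though the paper does not say explicitly which test was implemented; a direct backtracking search for a $5$-CDC containing $C$ would be an equally valid reading.
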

For colourable cubic graphs we have already shown that the statement
in Conjecture \ref{arthur1} can be simultaneously strengthened to give
a 4-CDC and allow $C$ to be any 2-regular subgraph, rather than just a
cycle.  For snarks we cannot find 4-CDCs but it is not immediately
obvious that $C$ cannot have several components, if we exclude the
Petersen graph. However by Observation \ref{obs:counterexample} we
know that this generalization is false, even if $5$ is replaced by a
larger value and $G$ is required to be cyclically 5-edge
connected. Among the cyclically 4-edge connected snarks this fails
already for $n=18$, since both snarks of that order have 2-factors
which cannot be included in a CDC.  However, for sufficiently small
snarks there is either an including 5-CDC or no including CDC at all.
\begin{observation}\label{obs:noext}
	Given a 2-regular subgraph $\mathcal{D}$ of a cyclically 4-edge connected snark on $n\leq 28$ vertices, there is either no CDC which contains 
	$\mathcal{D}$  or there is a 5-CDC which contains $\mathcal{D}$. There are exactly 68 snarks on $30$ vertices  with 2-regular subgraphs which are contained in a CDC but not in a 5-CDC.
\end{observation}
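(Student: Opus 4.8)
The statement is an empirical result about a fixed finite collection of graphs, so the plan is an exhaustive and independently verified computer search over the lists of snarks already produced by \textit{snarkhunter}. Since every snark is by definition cyclically 4-edge connected, the hypothesis simply ranges over \emph{all} snarks of the relevant order. For each such graph $G$ I would first enumerate every 2-regular subgraph $\mathcal{D}$ of $G$, i.e. every subset of $E(G)$ that induces a vertex-disjoint union of cycles. This can be done by a straightforward backtracking search over the edges of $G$ (or over a basis of the cycle space), pruning partial selections that already force a vertex to have degree $3$, and using the automorphism group of $G$ to discard subgraphs equivalent to ones already treated.

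For each $\mathcal{D}$ the two questions to decide are: (i) does some CDC of $G$ contain $\mathcal{D}$, and (ii) does some 5-CDC of $G$ contain $\mathcal{D}$? Since a 5-CDC is in particular a CDC, a positive answer to (ii) gives a positive answer to (i); hence it suffices to run the (expensive) test (i) only on those $\mathcal{D}$ for which (ii) fails. For (ii) I would use the characterization of Hoffmann-Ostenhof \cite{arthur1} quoted above: there is a 5-CDC containing $\mathcal{D}$ if and only if $G$ has 2-regular subgraphs $D_1 \supseteq \mathcal{D}$ and $D_2$ with $M = E(D_1)\cap E(D_2)$ a matching and $G-M$ admitting a nowhere-zero 4-flow. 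This turns (ii) into a search over pairs of 2-regular subgraphs combined with (fast) nowhere-zero 4-flow tests, the latter benefiting from the colourability shortcut underlying Proposition \ref{prop:cdc}. For (i) I would run a direct backtracking construction of a CDC that starts from the cycles of $\mathcal{D}$ and adds cycles one at a time, enforcing at every step that no edge is covered more than twice and (at termination) that every edge is covered exactly twice, which gives strong propagation and early cutoffs.

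Running this for all snarks on $n \le 28$ vertices, the search should confirm that (i) and (ii) always have the same truth value, i.e. no 2-regular subgraph lies in a CDC but in no 5-CDC; running it on the snarks of order $30$ should return precisely $68$ graphs admitting such a subgraph. As with the other computational statements in the paper, the computation would be carried out independently by the two groups using separate implementations on different hardware, and the counts compared for agreement.

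The main obstacle is purely computational: the number of 2-regular subgraphs per graph can be large, and both the CDC-existence search and the 5-CDC-existence search are exponential-time in the worst case, so the feasibility of the whole verification hinges on effective pruning — automorphism reduction on the $\mathcal{D}$'s, restricting the costly CDC search to the few subgraphs where the 5-CDC test already fails, and exploiting the Hoffmann-Ostenhof reduction so that the bulk of the 5-CDC work becomes polynomial-time flow checking. The 5-CDC search is the bottleneck at $n = 30$, which is why the full analysis is reported only up to that order.
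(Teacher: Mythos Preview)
Your proposal is correct and matches the paper's approach: this observation is presented in the paper purely as the outcome of an exhaustive computer search over the generated snark lists, with no algorithmic details given beyond the general remark that tests were implemented independently by the two groups and cross-checked. Your plan of enumerating all 2-regular subgraphs, deciding 5-CDC extendability (possibly via the Hoffmann--Ostenhof criterion quoted just above the observation), and falling back on a direct CDC backtracking search only for the subgraphs that fail the 5-CDC test is a sound and natural implementation of exactly that verification.

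One small inaccuracy worth flagging: you describe the nowhere-zero 4-flow test on $G-M$ as ``polynomial-time flow checking''. Suppressing the degree-2 vertices of $G-M$ yields a cubic multigraph, and deciding a nowhere-zero 4-flow there is equivalent to 3-edge-colourability, which is NP-complete. So the Hoffmann--Ostenhof reduction does not make the 5-CDC test polynomial in the worst case; it is merely fast in practice on graphs of this size. This does not affect the validity of your overall plan.
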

In Figure \ref{fig:no5cdc} we display one of the 30 vertex snarks mentioned in Observation~\ref{obs:noext}. The full set is presented in Appendix \ref{app:no5cdc}.
\begin{figure}[h!]	
	\includegraphics[scale=0.5]{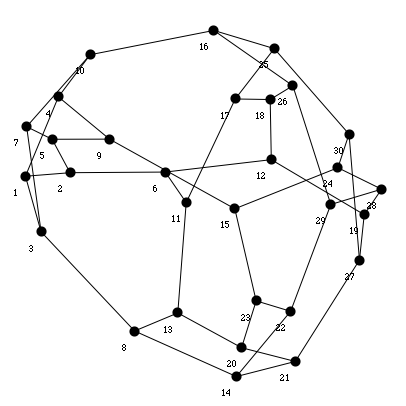}
		\caption{A snark where the 2-factor given by the cycles $[1, 4, 9, 5, 2]$ and [3, 8, 14, 22, 23, 15, 24, 30, 25, 17, 18, 12, 6, 11, 13, 20, 21, 27, 19, 28, 29, 26, 16, 10, 7]  cannot be part of any 5-CDC.}\label{fig:no5cdc}
\end{figure}

For cyclically 5-edge connected snarks the only exception is the Petersen graph.
\begin{observation}\label{obs:noext2}
	Every 2-regular subgraph of a cyclically 5-edge connected snark on $n\leq 32$ vertices, except the Petersen graph, is part of a 5-CDC.  Among the cyclically 5-edge connected snarks on $34$ vertices the only exceptions are the 12 snarks from Observation \ref{obs:counterexample} with their specified 2-factor.
	
	On $36$ vertices there are 44 cyclically 5-edge connected snarks which have 2-regular subgraphs which are  not part of any 5-CDC, but are part of some CDC.  Some of these  2-regular subgraphs are not 2-factors.
\end{observation}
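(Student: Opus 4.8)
This is a computer-verification result, and the plan is as follows. Fix a cyclically $5$-edge connected snark $G$ on $n$ vertices; the task is to decide, for every $2$-regular subgraph $D$ of $G$, whether $D$ lies in some $5$-CDC and, for those $D$ for which it does not, whether $D$ nevertheless lies in some CDC. The $2$-regular subgraphs of a cubic graph are exactly the disjoint unions of cycles, i.e.\ the elements of its cycle space over $\mathrm{GF}(2)$; for a connected cubic graph on $n$ vertices there are precisely $2^{|E(G)|-|V(G)|+1}=2^{n/2+1}$ of them, and they are enumerated by running over the $\mathrm{GF}(2)$-span of a cycle-space basis. For $n\le 34$ the search is run over all snarks, while for $n=36$ it is restricted to the $180\,612$ cyclically $5$-edge connected snarks (Table~\ref{sfig1}), which is what keeps the computation within reach.

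The key subroutine decides whether a given $D$ extends to a $5$-CDC, and here I would use the characterization of Hoffmann-Ostenhof stated above: such a $5$-CDC exists if and only if $G$ has $2$-regular subgraphs $D_1\supseteq D$ and $D_2$ with $M=E(D_1)\cap E(D_2)$ a matching and $G-M$ admitting a nowhere-zero $4$-flow. This recasts the question as a search over pairs $(D_1,D_2)$ of cycle-space elements with $D_1\supseteq D$, together with the cheap checks that $M$ is a matching and that $G-M$ has a nowhere-zero $4$-flow; the latter is a small fixed computation since $G-M$ has maximum degree $3$ (for instance, one decides whether $E(G)\setminus M$ splits into two even subgraphs). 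Replacing an enumeration of the, possibly exponentially many, cycles of $G$ by an enumeration of the cycle space together with a polynomial flow test is what makes the $5$-CDC part feasible; as an independent check one may also run a direct backtracking search that labels each edge of $G$ by a $2$-element subset of $\{1,\dots,5\}$ subject to the local consistency condition at each vertex and seeded with $D$.

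For the $D$ that fail the $5$-CDC test one must still decide CDC-extendability. By Corollary~\ref{scdcc32} the SCDCC holds for all snarks on $n\le 36$ vertices, so every $D$ with a single component already lies in some CDC; for the remaining multi-component $D$ I would run a backtracking search for a CDC containing $D$, extending the components of $D$ to a multiset of cycles that covers each edge exactly twice, pruned by flow-based necessary conditions. Assembling the outcomes yields: for $n\le 32$, the Petersen graph --- some of whose $2$-regular subgraphs lie in no CDC at all --- is the only cyclically $5$-edge connected snark possessing a $2$-regular subgraph that is not contained in a $5$-CDC; for $n=34$ the only further exceptions are the $12$ snarks of Observation~\ref{obs:counterexample}, exactly for the $2$-factor singled out there (a union of two induced cycles), which by that same observation lies in no CDC; and for $n=36$ the search isolates the $44$ cyclically $5$-edge connected snarks in question, and among their offending $2$-regular subgraphs one finally checks that at least one has $V(D)\ne V(G)$, i.e.\ is not a $2$-factor. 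The outcomes can further be matched against Observation~\ref{obs:noext} for consistency with the cyclically $4$-edge connected picture up to $n=30$.

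The only real obstacle is the size of the search: a priori one faces $2^{n/2+1}$ subgraphs per graph, times up to roughly $6\times 10^{7}$ snarks at $n=36$, each carrying a $5$-CDC or CDC test whose naive form enumerates cycles. The Hoffmann-Ostenhof reduction removes the cycle enumeration from the $5$-CDC test, restricting the $n=36$ run to the cyclically $5$-edge connected snarks removes the large multiplicative factor, and aggressive pruning controls the residual CDC search; together these make the verification tractable, and --- as with the other computational results of this section --- it was carried out independently by the groups in Belgium and Sweden with agreeing outcomes.
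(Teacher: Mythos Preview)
The paper offers no proof for this observation: it is reported purely as the output of an exhaustive computer search, with no description of the algorithm used beyond the remark (made for the preceding observation) that the test is ``extremely time consuming'' and that the $n=36$ run was restricted to the cyclically $5$-edge connected snarks. Your proposal is a reasonable outline of how one could implement such a search, and it would in principle establish the stated conclusions.

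Two remarks on your write-up. First, while the Hoffmann--Ostenhof criterion is a legitimate reformulation of the $5$-CDC test, your efficiency claim for it is optimistic: the naive search over pairs $(D_1,D_2)$ of even subgraphs with $D\subseteq D_1$ has size on the order of $2^{n+2}$ per candidate $D$, which is not evidently cheaper than a direct backtracking search for a $5$-CDC containing $D$ (the method you relegate to an ``independent check'' is more plausibly the primary tool in practice). Second, your closing sentence that the verification ``was carried out independently by the groups in Belgium and Sweden with agreeing outcomes'' is speculation; the paper states explicitly that for some of the more time-consuming tests only one group ran the full computation beyond $n=30$, and this is among the most expensive tests reported. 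Neither point affects correctness, but you should not present either as fact.
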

The snarks on $36$ vertices mentioned in the observation are presented in Appendix \ref{app:no5cdc2}. Without access to the snarks from Observation \ref{obs:counterexample} one might have been tempted to conjecture that the only exception was the Petersen graph, again demonstrating the importance of computer generated examples as an aid for our understanding.  
\begin{figure}[h!]
	\includegraphics[scale=0.50]{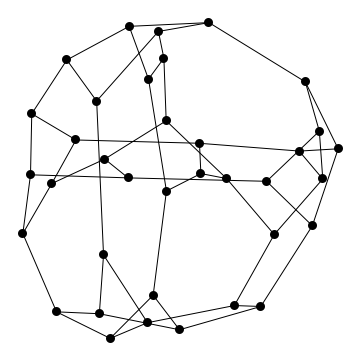}
		\caption{One of the 36 vertex graphs mentioned in Observation \ref{obs:noext2}}
\end{figure}
	
By using Observation \ref{obs:o5} and the methods from  \cite{hagglund10:2}  we can deduce a strengthening of Corollary \ref{circ} for slightly larger values of the girth.
\begin{theorem}
	Let $G$ be a bridgeless cubic graph. If $G$ has a cycle of length at least $n-8$ then $G$ has a 6-CDC. 
\end{theorem}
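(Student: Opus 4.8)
The plan is to imitate the reduction of H\"agglund and Markstr\"om from \cite{hagglund10:2} that yields Theorem~\ref{thmcirc} and Corollary~\ref{circ}, but to carry an \emph{orientable} 5-CDC through the argument and to use Observation~\ref{obs:o5} --- orientable 5-CDCs exist for all snarks on at most $36$ vertices --- as the base case in place of Corollary~\ref{scdcc32}. Since Observation~\ref{obs:o5} only asserts that an orientable 5-CDC exists, rather than an SCDCC-type statement about a prescribed cycle, the reduction must absorb the long cycle instead of carrying it into the bounded graph, and it is precisely this that will cost one extra colour and weaken the hypothesis to $n-8$ from $n-10$.

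Concretely, let $G$ be bridgeless cubic of order $n$ and let $C$ be a cycle with $|V(C)|\ge n-8$, so that $S:=V(G)\setminus V(C)$ has at most $8$ vertices. After the usual normalisations --- reducing at cyclic $2$- and $3$-edge-cuts and at short cycles, and disposing of the degenerate situations in which $S$ is independent or spans very few edges, or in which $C$ has many chords --- the part of $G$ lying outside a bounded neighbourhood of $S$ (a long ``cylinder'' assembled from arcs of $C$) can be contracted to a bounded auxiliary subgraph, producing a cubic (multi)graph $G'$ of order $O(|S|)$; for $|S|\le 8$ one checks that this stays within $36$. Each piece of $G'$ is then either colourable, where an orientable 5-CDC is classical (cf.\ Proposition~\ref{prop:cdc}), or a snark on at most $36$ vertices, where Observation~\ref{obs:o5} supplies one; assembling these across the $2$- and $3$-edge-cuts yields an orientable 5-CDC $\mathcal D'$ of $G'$.

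The crucial step is then to lift $\mathcal D'$ to a 6-CDC of $G$. Reinserting the contracted arcs stretches the cycles of $\mathcal D'$ back along $C$; the orientations of $\mathcal D'$ tell us, consistently, how the re-expanded cylinder must be traversed, so one can determine exactly which edges of $G$ are still only singly covered and repair them by adjoining a single further even subgraph built from arcs of $C$. This repair class may clash in colour with all five classes of $\mathcal D'$, so a sixth colour is in general unavoidable --- but one suffices --- and the result is a 6-CDC of $G$. The bound $n-8$ then comes out just as $n-10$ did in Corollary~\ref{circ}.

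The hard part will be this lifting step, together with the normalisations feeding it: one must show that the orientable 5-CDC of $G'$ genuinely pulls back to a double cover of the cylinder, that the leftover single coverage along $C$ can always be absorbed into a \emph{single} new even subgraph (this is where one sees that five colours need not suffice in general but six always do), and that the degenerate configurations of $S$ --- and any parallel edges or short cycles created by the contraction --- neither break the vertex bound $36$ nor obstruct the lift. The case split on colourability of the pieces of $G'$ and the appeal to Observation~\ref{obs:o5} are routine once the reduction is in place.
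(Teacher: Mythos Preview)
Your proposal matches the paper's own ``proof'' at the only level at which the paper gives one: the paper offers no argument beyond the sentence ``By using Observation~\ref{obs:o5} and the methods from~\cite{hagglund10:2} we can deduce\ldots'', so your plan to rerun the H\"agglund--Markstr\"om reduction with Observation~\ref{obs:o5} in place of Corollary~\ref{scdcc32} is exactly what the authors intend. The idea that orientability of the 5-CDC is what makes the lift along the contracted arcs of $C$ coherent, and that one residual even subgraph (hence a sixth colour) suffices to repair the cover, is a reasonable reading of why an \emph{orientable} 5-CDC is invoked rather than a plain one.

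Where your sketch is shakier is the numerology. You attribute the drop from $n-10$ to $n-8$ to the fact that $C$ must be ``absorbed'' rather than carried into $G'$, but in the reduction behind Theorem~\ref{thmcirc} the size of the auxiliary graph depends only on $|S|$, not on what one subsequently asks of it; dropping the prescribed-cycle requirement should, if anything, relax the base case. The shift to $n-8$ therefore almost certainly comes from a different bookkeeping detail inside the \cite{hagglund10:2} machinery when the SCDCC hypothesis is replaced by mere existence of an (orientable) 5-CDC --- extra vertices needed to encode how the 5-CDC must interact with the image of $C$, or a variant reduction tailored to a 6-CDC output --- rather than from the informal ``absorption'' story you tell. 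Since neither the paper nor your sketch reproduces that part of \cite{hagglund10:2}, you should consult that reference directly before claiming the bound; as written, ``for $|S|\le 8$ one checks that this stays within $36$'' is an assertion, not an argument.
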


\section{Shortest cycle covers}	
The results from the previous sections can be combined in order to provide length bounds on cycle covers. Recall that a \emph{cycle cover} $\mathcal{F}$ of a graph $G$ is a set of cycles from $G$ such that every edge in $G$ belongs to at least one of the cycles in $\mathcal{F}$. The length of $\mathcal{F}$ is the sum of the lengths of all cycles in $\mathcal{F}$. The shortest length of a cycle cover is a well studied property and in \cite{AT} Alon and Tarsi made the following conjecture:
\begin{conjecture}[Alon, Tarsi \cite{AT}]\label{conj:AT}
	If $G$ is a bridgeless graph with $m$ edges, then the length of the shortest cycle cover of $G$ is at most $\frac{7m}{5}$.
\end{conjecture}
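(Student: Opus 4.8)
A complete proof of the Alon--Tarsi bound is not within reach, since this is one of the central open problems on cycle covers; the plan is to combine the cycle double cover results of the preceding sections with the generated lists of snarks to establish the conjecture in several regimes and to locate precisely where the argument breaks down.

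The first step is to reduce to snarks. If $G$ is a colourable cubic graph with $m$ edges, a proper $3$-edge-colouring gives three perfect matchings whose complements are $2$-factors $F_1,F_2,F_3$, each with $\tfrac{2m}{3}$ edges, and every edge lies in exactly two of them; hence the cycles of $F_1$ together with those of $F_2$ form a cycle cover of length $\tfrac{4m}{3}\le\tfrac{7m}{5}$. As it suffices to treat cubic graphs, and the standard reductions across cyclic $2$- and $3$-edge-cuts and across $4$-cycles reduce a smallest counterexample to a snark, the remaining point is to check that none of these gadget replacements increases the quotient $(\text{cover length})/(\text{number of edges})$ — a routine but necessary calculation. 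Evaluating the shortest cycle cover directly on the generated lists of snarks then gives that the conjecture has no counterexample on $n\le 36$ vertices.

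The second step treats cubic graphs whose circumference is close to $n$ by deleting a long cycle from a CDC that contains it. If $G$ has a cycle $C$ with $|V(C)|\ge n-1$, Theorem~\ref{haggkvist1} supplies a CDC $\mathcal D$ with $C\in\mathcal D$; then $\mathcal D\setminus\{C\}$ covers the edges of $C$ once and all other edges twice, so it is a cycle cover of length $2m-|E(C)|\le 2m-(n-1)=\tfrac{4m}{3}+1$ (using $n=\tfrac{2m}{3}$), which is at most $\tfrac{7m}{5}$ once $m\ge 15$; the finitely many smaller cubic graphs are settled by the computation above. To lower the circumference threshold one would replace Theorem~\ref{haggkvist1} by the reduction behind Theorem~\ref{thmcirc}: a cubic graph of circumference at least $n-k-1$ is reduced to a cubic graph of order at most $4k$, the SCDCC — verified for $n\le 36$ in Corollary~\ref{scdcc32}, hence applicable for $k\le 9$ — produces a CDC of the reduced graph through the image of $C$, and this lifts to a CDC of $G$ containing $C$; deleting $C$ then yields a cover of length $\le\tfrac{4m}{3}+k+1$, which is below $\tfrac{7m}{5}$ as soon as $m\ge 15(k+1)$.

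The main obstacle is that this programme leaves most snarks untouched: no structural reduction is known that brings an arbitrary snark of small circumference into the computationally accessible range, so the conjecture stays open there. Two further points require care. The long-cycle argument needs a CDC \emph{containing} the prescribed long cycle, not merely some CDC, so the reduction behind Theorem~\ref{thmcirc} has to be arranged to carry $C$ through the contraction and the lift, and the available SCDCC input limits the admissible circumference deficit to $k\le 9$. Moreover the bound $2m-|E(C)|\le\tfrac{4m}{3}+k+1$ only undercuts $\tfrac{7m}{5}$ for $m\ge 15(k+1)$, so the band of orders between the SCDCC-verified range and the point where this inequality becomes effective — cubic graphs of order between $37$ and about $150$ whose circumference is close to $n$ — must be handled separately, either by extending the computer search or by a cover-length estimate sharper than simply deleting the longest cycle.
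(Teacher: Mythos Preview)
This statement is Conjecture~\ref{conj:AT}, an open problem; the paper does not prove it and there is no proof to compare against. What the paper establishes is only Corollary~\ref{cor:short}: the conjecture holds for all snarks on $n\le 36$ vertices. You correctly acknowledge at the outset that the full conjecture is out of reach.

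Your partial programme has a genuine gap in the first step. You claim that the standard reductions across cyclic $2$- and $3$-edge-cuts and across $4$-cycles reduce a smallest counterexample to a snark, calling the compatibility check ``a routine but necessary calculation.'' It is not routine and in fact fails: the paper itself remarks, immediately after Conjecture~\ref{conj:short}, that for cubic graphs of cyclic connectivity~$3$ the constant $\tfrac{7}{5}$ would be optimal. So the shortest-cycle-cover ratio does not improve under these gadget reductions the way the CDCC or the SCDCC do; there is no known reduction of Conjecture~\ref{conj:AT} to snarks, and verifying it for snarks up to order~$36$ does not verify it for all bridgeless (or even all cubic) graphs up to that order. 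Your assertion ``As it suffices to treat cubic graphs'' is likewise unjustified for this conjecture.

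For comparison, the paper's verification in Corollary~\ref{cor:short} is organised differently from your second step. Rather than deleting a longest cycle, it uses Observation~\ref{obs:pos2f} (every snark on $n\le 36$ vertices except two has a $2$-factor contained in some CDC) together with Lemma~\ref{lem:short} to obtain a cover of length exactly $\tfrac{4m}{3}$; only for the two exceptional graphs does it fall back to an $(n-1)$-cycle and the SCDCC (Corollary~\ref{scdcc32}) to get $\tfrac{4m}{3}+1$, with a separate computer check that this is tight. Your long-cycle argument would also handle the small snarks, since their circumference is at least $n-2$, but yields a weaker bound for most of them and does not extend beyond the computationally verified range any more than the paper's does.
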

The minimum length among the cycle covers of a cubic graph can be bounded by using the following simple lemma.
\begin{lemma}\label{lem:short}
	If a cubic graph $G$ has a 2-regular subgraph $C$ and a CDC $\mathcal{D}$ such that $C\in\mathcal{D}$, then the length of the shortest cycle cover of $G$  is at most $2m-|C|$, where $m$ is the number of edges in $G$
\end{lemma}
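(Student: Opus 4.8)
The plan is to obtain the required cycle cover directly from the given cycle double cover $\mathcal{D}$, simply by throwing away the cycles that constitute $C$. The first step is to record the elementary counting identity: since $\mathcal{D}$ is a CDC, the sum of the lengths of all cycles in $\mathcal{D}$ counts every edge of $G$ exactly twice, so the total length of $\mathcal{D}$ equals $2m$.

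Next I would set $\mathcal{F} = \mathcal{D}\setminus C$, by which I mean the sub-multiset of $\mathcal{D}$ obtained by deleting the connected components of $C$; this is exactly where the hypothesis $C\in\mathcal{D}$ is used, read as the statement that each component of the $2$-regular subgraph $C$ occurs in the multiset $\mathcal{D}$. The key claim is that $\mathcal{F}$ is still a cycle cover of $G$. To see this I would split on whether an edge lies in $C$: an edge $e\notin E(C)$ is covered twice by $\mathcal{D}$ and by no component of $C$, hence is still covered (twice) by $\mathcal{F}$; an edge $e\in E(C)$ is covered twice by $\mathcal{D}$, of which exactly one covering comes from a component of $C$, so it is covered exactly once by $\mathcal{F}$. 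In either case every edge of $G$ lies in at least one cycle of $\mathcal{F}$.

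Finally I would compute the length of $\mathcal{F}$: it equals the total length of $\mathcal{D}$ minus the combined length of the deleted components, that is $2m-|C|$, where $|C|$ denotes the number of edges of $C$ (for a $2$-regular subgraph this equals its number of vertices, so the formula is unambiguous). Since $\mathcal{F}$ is one particular cycle cover of $G$ of this length, the shortest cycle cover of $G$ has length at most $2m-|C|$, which is the assertion.

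I do not expect any genuine obstacle in this argument; the only points requiring a little care are the bookkeeping of how many times each edge is covered after the deletion, and fixing the reading of ``$C\in\mathcal{D}$'' for a possibly disconnected $2$-regular subgraph as ``every component of $C$ belongs to the multiset $\mathcal{D}$.'' Everything else follows immediately from the definitions of a cycle double cover and of a cycle cover.
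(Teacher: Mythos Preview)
Your argument is correct and is exactly the natural proof; the paper itself does not supply a proof for this lemma, calling it ``simple,'' and the one-line idea you spell out---delete the components of $C$ from the CDC $\mathcal{D}$ and observe that what remains is a cycle cover of total length $2m-|C|$---is clearly what the authors have in mind. Your care in checking the edge-covering count and in reading ``$C\in\mathcal{D}$'' componentwise is appropriate and nothing more is needed.
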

For a colourable cubic graph this immediately implies that the shortest cycle cover has length at most $\frac{4m}{3}$, by taking $C$ to be a 2-factor and applying Proposition \ref{prop:cdc}. By using Corollary \ref{scdcc32} with a longest cycle $C$, and the lower bound on the circumference from Section \ref{sec:circum}, we now have:
\begin{corollary}\label{cor:short}
	Every snark on $n\leq 36$ vertices, except the Petersen graph and the graph in Figure \ref{strongsnark}, has a cycle cover of length $\frac{4m}{3}$ and the two exceptional graphs have  shortest cycle covers of length $\frac{4m}{3}+1$.  Hence there are no counterexamples to  Conjecture \ref{conj:AT} among the snarks on $n\leq 36$ vertices.
\end{corollary}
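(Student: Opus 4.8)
The plan is to combine the trivial lower bound on the length of a cycle cover of a cubic graph with the structural information about cycle double covers already obtained, in particular Observation~\ref{obs:pos2f} and Corollary~\ref{scdcc32}. First I would record the elementary lower bound: in any cycle cover $\mathcal F$ of a bridgeless cubic graph $G$ the set of edges covered an odd number of times is the symmetric difference of the cycles of $\mathcal F$, hence an even subgraph, hence has at most $n=\frac{2m}{3}$ edges; since every remaining edge is covered at least twice, the length of $\mathcal F$ is at least $2m-n=\frac{4m}{3}$. Chasing the equality cases shows, moreover, that a cycle cover of length exactly $\frac{4m}{3}$ exists if and only if $G$ has a CDC containing a $2$-factor, and that a cycle cover of length exactly $\frac{4m}{3}+1$ (and none shorter) exists if and only if $G$ has no CDC containing a $2$-factor but does have a CDC containing a $2$-regular subgraph with $n-1$ edges.

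For every snark on $n\le 36$ vertices other than the Petersen graph and the graph in Figure~\ref{strongsnark}, Observation~\ref{obs:pos2f} supplies a $2$-factor $F$ lying in a CDC; applying Lemma~\ref{lem:short} with $C=F$ (so $|C|=n=\frac{2m}{3}$) produces a cycle cover of length $2m-n=\frac{4m}{3}$, which matches the lower bound, so the shortest cycle cover has length exactly $\frac{4m}{3}$. For the Petersen graph, whose circumference is $n-1=9$, a longest cycle lies in a CDC by Corollary~\ref{scdcc32}, so Lemma~\ref{lem:short} gives a cycle cover of length $2m-(n-1)=\frac{4m}{3}+1=21$; since the Petersen graph has no $2$-factor in a CDC its shortest cycle cover cannot attain $\frac{4m}{3}$, and being an integer it must equal $\frac{4m}{3}+1$.

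The delicate case is the strong snark $S$ on $34$ vertices in Figure~\ref{strongsnark}. Being a strong snark it cannot have circumference $n-1$, and by Section~\ref{sec:circum} no snark on at most $36$ vertices has circumference below $n-2$, so its circumference is exactly $n-2=32$; feeding a longest cycle into Lemma~\ref{lem:short} only yields a cycle cover of length $2m-(n-2)=\frac{4m}{3}+2$, one more than claimed. To close this gap I would exhibit, explicitly or by computer search, a $2$-regular subgraph of $S$ on $33$ vertices that is part of a CDC of $S$; Lemma~\ref{lem:short} then gives a cycle cover of length $2m-33=\frac{4m}{3}+1=69$, and since $S$ has no $2$-factor in a CDC (Observation~\ref{obs:pos2f}) the lower bound above shows this is optimal. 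Since $S$ is a single fixed $34$-vertex graph, a direct computation of its shortest cycle cover is an equally valid route. I expect this step --- getting the exact value $\frac{4m}{3}+1$ rather than $\frac{4m}{3}+2$ for $S$ --- to be the only genuine obstacle; the rest is bookkeeping.

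Finally, the bound of Conjecture~\ref{conj:AT} follows numerically: $\frac{4m}{3}\le\frac{7m}{5}$ holds for all $m$, and $\frac{4m}{3}+1\le\frac{7m}{5}$ holds precisely when $m\ge 15$, which is satisfied by every snark (with equality only for the Petersen graph, which therefore lies exactly on the Alon--Tarsi bound). Hence every snark on $n\le 36$ vertices has a cycle cover of length at most $\frac{7m}{5}$, so there is no counterexample to Conjecture~\ref{conj:AT} in this range.
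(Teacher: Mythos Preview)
Your approach coincides with the paper's for the non-exceptional snarks (Observation~\ref{obs:pos2f} plus Lemma~\ref{lem:short}) and for the Petersen graph (an $(n-1)$-cycle fed through Corollary~\ref{scdcc32} and Lemma~\ref{lem:short}). Where you differ, you are in fact more careful than the paper.

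First, your parity argument --- the edges covered an odd number of times form an even subgraph of a cubic graph, hence at most $n$ edges, giving length $\ge 2m-n=\frac{4m}{3}$ with equality precisely when some CDC contains a $2$-factor --- supplies a clean theoretical lower bound for the two exceptions. The paper does not argue this at all; it simply cites ``a separate computer test'' to certify that no shorter cover exists. Second, and more strikingly, you correctly note that the graph $S$ of Figure~\ref{strongsnark}, being a strong snark, has circumference exactly $n-2=32$, so feeding a longest \emph{cycle} into Lemma~\ref{lem:short} only reaches $\frac{4m}{3}+2$. The paper's proof, by contrast, asserts that ``both of the exceptional graphs have cycles of length $n-1$'' --- which directly contradicts its own earlier remark that a snark with an $(n-1)$-cycle cannot be strong. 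So the paper's written argument does not actually establish the upper bound $\frac{4m}{3}+1$ for $S$. Your proposed remedy is exactly right: since Lemma~\ref{lem:short} is stated for an arbitrary $2$-regular subgraph, one needs a (necessarily disconnected) $2$-regular subgraph on $33$ vertices lying in some CDC of $S$, or else a direct computation of the shortest cycle cover of this single $34$-vertex graph. Either is a routine check on one fixed graph. In short, your proposal is correct and in fact repairs a genuine slip in the paper's proof.
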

\begin{proof}
	By Observation \ref{obs:pos2f} we know that all snarks except the two exceptional ones have 2-factors which are part of some CDC, and by Lemma \ref{lem:short}  they have a cycle cover of length $\frac{4m}{3}$.  Both of the exceptional graphs have cycles of length $n-1$ and by Corollary \ref{scdcc32} those cycles are part of some CDC, and the lemma gives a cycle cover of length $\frac{4m}{3}+1$. A separate computer test shows that for the exceptional graphs there are no shorter cycle covers. 
\end{proof}

We believe that results of Corollary \ref{cor:short} are typical for cyclically well connected cubic graphs and give to following refinement of Conjecture \ref{conj:AT}
\begin{conjecture}\label{conj:short}
	If $G$ is a snark with $m$ edges, then the length of the shortest cycle cover of $G$ is at most $\frac{4m}{3}+o(m)$.
\end{conjecture}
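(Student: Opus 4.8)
The plan is to reduce the conjecture to the existence, in every snark, of a long $2$-regular subgraph that lies in a cycle double cover, and then to attack that existence statement through the SCDCC together with a circumference bound. The starting point is Lemma \ref{lem:short}: if $C$ is a $2$-regular subgraph of $G$ contained in some CDC, then the shortest cycle cover of $G$ has length at most $2m-|C|$. Since a cubic graph on $n$ vertices has $m=\tfrac{3n}{2}$ edges, so that $n=\tfrac{2m}{3}$, a subgraph $C$ with $|C|=n-o(n)$ yields $2m-|C|=\tfrac{4m}{3}+o(n)=\tfrac{4m}{3}+o(m)$, which is exactly the bound claimed. It therefore suffices to prove that every snark contains a $2$-regular subgraph on $n-o(n)$ vertices that extends to a CDC.

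First I would try to realize $C$ as a genuine long cycle. A $2$-factor already has $|C|=n$, so being allowed to omit $o(n)$ vertices is pure slack; the whole difficulty is CDC-extendability. Granting the SCDCC, every cycle lies in a CDC, and the problem collapses to producing a cycle of length $n-o(n)$, that is, to showing that snarks have circumference $n-o(n)$. This is also the natural asymptotic form of the trade-off already made explicit in Theorem \ref{thmcirc}, which turns a threshold of order $4k$ for the SCDCC into CDCs for all graphs of circumference at least $n-k-1$.

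The main obstacle is that each of these two ingredients is itself open, and I expect this to make the full conjecture genuinely hard. On one side, no lower bound is known that forces the longest cycle of a snark to miss only $o(n)$ vertices, and Babai's conjecture even suggests that highly symmetric cubic graphs may have circumference bounded away from $n$. On the other side, Observation \ref{obs:counterexample} shows that a prescribed long $2$-regular subgraph, even a $2$-factor of a cyclically $5$-edge-connected snark, need not lie in any CDC; so one cannot simply take an arbitrary long cycle and invoke extendability, and instead needs either the full SCDCC or a robust argument showing that at least one among the many long $2$-regular subgraphs is CDC-extendable.

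As a more self-contained route that sidesteps both open conjectures, I would instead try to bound the cycle cover directly in terms of the oddness $\omega$ of $G$. Starting from a $2$-factor $F$ that realizes $\omega$, its even cycles can be reused verbatim in a cover, and the remaining task of covering the complementary perfect matching while repairing the $\omega$ odd cycles should be achievable by a correcting even subgraph whose excess length over $\tfrac{4m}{3}$ is controlled by $\omega$. If one can show this excess is $o(m)$ whenever $\omega=o(n)$, then the conjecture follows for snarks of sublinear oddness, a class that in particular contains all snarks of bounded oddness. Making this correction step simultaneously short and rigorous, without secretly assuming a CDC, is the part I expect to demand the most new ideas.
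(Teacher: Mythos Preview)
The statement you are trying to prove is labelled \emph{Conjecture} \ref{conj:short} in the paper, and the paper offers no proof of it; it is put forward purely as a conjecture, motivated by the computational evidence of Corollary \ref{cor:short}. So there is no ``paper's own proof'' to compare against.

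Your proposal, read carefully, is not a proof either, and you essentially acknowledge this. You correctly reduce the conjecture via Lemma \ref{lem:short} to the existence of a CDC-extendable $2$-regular subgraph of size $n-o(n)$, and then observe that the two natural routes --- a circumference bound of $n-o(n)$ combined with the SCDCC, or a direct oddness-based argument showing excess $o(m)$ --- each rest on statements that are themselves open. That diagnosis is accurate and matches the spirit in which the paper poses the conjecture: Corollary \ref{cor:short} gets $\tfrac{4m}{3}+O(1)$ for $n\le 36$ precisely by combining the verified SCDCC (Corollary \ref{scdcc32}) with the computed circumference $\ge n-2$, and the conjecture is the assertion that this behaviour persists asymptotically. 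Your oddness route would also need, at minimum, that snarks have oddness $o(n)$, which is not known. In short, your write-up is a reasonable survey of the obstructions, but it does not supply the missing idea, and none is expected since the paper itself leaves the statement open.
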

It is known that for cubic graphs of cyclic connectivity 3  the constant $\frac{7}{5}$ in Conjecture \ref{conj:AT} would be optimal.

\section{Jaeger's flow and colouring conjectures}	
In \cite{Jae:80} Jaeger studied nowhere zero flows on graphs with the flow values restricted to certain subsets of a group. These questions are related to the cycle double cover and the various flow conjectures of Tutte. See \cite{Jae:88} for a survey by the same author. In \cite{Jae:80} Jaeger studied a family of partial orders on  the properties of flow spaces of  graphs and made a conjecture about one such partial order which implies the 5-CDCC, Conjecture  \ref{conj:o5cdcc}. We will state two of the equivalent forms of this conjecture.  

Let $\omega_G(v)$ denote the set of edges in the graph which are incident to the vertex $v$. Let $T(G)=\{\omega_G(v)|v\in V(G)\}$.  Given two cubic graphs $G$ and $H$, an \emph{$H$-colouring} of $G$ is a mapping $f$  from $E(G)$ to $E(H)$ such that $T(G)$ is mapped to $T(H)$.  Let $P$ denote the Petersen graph. In this terminology Jaeger conjectured that:
\begin{conjecture}[The Petersen colouring conjecture, Jaeger \cite{Jae:80}]\label{conj:pcol}
	Every bridgeless cubic graph has a $P$-colouring. 
\end{conjecture}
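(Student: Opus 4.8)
The plan is to reduce Conjecture~\ref{conj:pcol} to the class of snarks and then to attack the cyclically highly connected case. The first, routine, observation is that the conjecture already holds for every $3$-edge-colourable cubic graph. Indeed, a proper $3$-edge-colouring is, after adding two unused colours, a \emph{normal} $5$-edge-colouring: around any edge $uv$ exactly three colours appear among the five edges incident to $u$ or $v$, so every edge is ``poor''. Since Jaeger proved that a cubic graph admits a normal $5$-edge-colouring if and only if it admits a $P$-colouring~\cite{Jae:80}, colourable cubic graphs are $P$-colourable. Hence a smallest bridgeless counterexample $G$ to Conjecture~\ref{conj:pcol} must be uncolourable, so by Lemma~\ref{theorem_2-factor} it has oddness at least $2$.

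The second step is to force $G$ to be a cyclically $5$-edge-connected snark. For this I would use gluing lemmas for $P$-colourings along small edge cuts: a $P$-colouring of a cubic graph assembled from smaller bridgeless cubic graphs across a cut can be reconstructed from $P$-colourings of the pieces together with a compatible choice of images in $E(P)$ for the crossing edges. This is the precise sense in which the standard operations ``behave well'' for this invariant, exactly as they do for the cycle double cover conjecture, and it makes non-$P$-colourability propagate downward. Reductions across $2$- and $3$-edge cuts are the easy part and show that $G$ must be a snark in the sense of this paper, with girth at least $5$ and $\lambda_c(G)\geq 4$. Pushing further to $\lambda_c(G)\geq 5$ requires a gluing lemma across $4$-edge cuts, which is delicate because the four crossing edges must receive mutually compatible images in $E(P)$; carrying out this bookkeeping correctly is the technical heart of the reduction.

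With the reduction in place, the finitely many small cases are already settled: the generation of all snarks on $n\leq 36$ vertices in this paper, together with the verification that each of them is $P$-colourable, supplies the base of an induction. What remains is an inductive step that, given a cyclically $5$-edge-connected snark on many vertices, produces one or more strictly smaller bridgeless cubic graphs whose $P$-colourings recombine into a $P$-colouring of $G$.

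This last step is exactly where I expect the argument to break down, and it is why the conjecture remains open. Cyclic $5$-edge-connectivity removes every small cut along which the gluing lemmas operate, so no purely cut-based decomposition reduces $G$, and the inductive scheme has no handle. A complete proof therefore seems to demand a genuinely new device---for example a flow- or homomorphism-theoretic certificate guaranteeing a $P$-colouring without exhibiting a smaller graph, or a structural decomposition tailored to the edge set of $P$ itself---and I would regard the construction of such a device as the main obstacle. Since Conjecture~\ref{conj:pcol} implies both Fulkerson's conjecture and the orientable $5$-CDC conjecture (Conjecture~\ref{conj:o5cdcc}), resolving this step would be a major advance, which is consistent with the fact that only the finite verification for $n\leq 36$ is achieved here.
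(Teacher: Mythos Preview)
The statement you are attempting to prove is a \emph{conjecture}, not a theorem: the paper does not prove Conjecture~\ref{conj:pcol}, and no proof is known. The paper merely states the conjecture, observes that a minimal counterexample must be a weak snark, and reports the computational verification (Observation~\ref{obs:pcol}) that no counterexample exists among snarks on $n\leq 36$ vertices. There is therefore no ``paper's own proof'' to compare against.

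Your proposal is not a proof either, and you say so yourself: the inductive step for cyclically $5$-edge-connected snarks is precisely the open problem, and you correctly identify that no cut-based reduction applies there. A few specific points nonetheless deserve comment. First, the equivalence between $P$-colourings and normal $5$-edge-colourings is from~\cite{Jae:85}, not~\cite{Jae:80}. Second, the paper only asserts that a minimal counterexample must be a weak snark (no nontrivial $3$-edge cut, girth~$\geq 4$); your stronger claim that one can reduce across $4$-edge cuts to reach $\lambda_c\geq 5$ is not established in the paper and would itself require a nontrivial gluing lemma that you have not supplied. Third, even granting all your reductions, the ``base case'' furnished by the computer search covers only $n\leq 36$, which does not anchor any induction unless the missing inductive step both exists and strictly decreases the order---which, as you note, it does not. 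In short, what you have written is a reasonable informal survey of why the conjecture is hard, but it is not a proof, and the paper does not claim one.
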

A colourable cubic graph trivially satisfies this conjecture by letting $f$ be a mapping onto the three edges incident to a single vertex in $P$. It is also easy to show that a minimal counterexample cannot have a nontrivial 3-edge cut, and hence no triangles. Thus a minimal counterexample must be a weak snark.

The second equivalent form of this conjecture \cite{Jae:85} concerns
another type of edge-colourings.  Let $c:E(G)\rightarrow
\{1,2,3,4,5\}$ be a proper edge colouring of $G$. An edge $e=\{u,v\}$
is poor if $|c(\omega_G(u)\cup \omega_G(v))|=3$ and rich if
$|c(\omega_G(u)\cup \omega_G(v))|=5$. A proper colouring in which all
edges are poor is in fact a proper 3-edge colouring, if $G$ is
connected. A colouring in which every edge is rich is called a strong
edge colouring. Such colourings were first studied in \cite{FJ:83}, in terms of
induced matchings. A colouring $c$ is called \emph{normal} if all
edges are either rich or poor. In \cite{Jae:85} it was shown that
Conjecture \ref{conj:pcol} is equivalent to:
\begin{conjecture}[Jaeger \cite{Jae:85}]\label{conj:pcol2}
	Every bridgeless cubic graph has a normal 5-edge colouring. 
\end{conjecture}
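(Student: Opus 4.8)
The plan is to attack Conjecture~\ref{conj:pcol2} by reducing to the class of weak snarks and then propagating normal $5$-edge colourings along the standard structural reductions. Since Conjecture~\ref{conj:pcol2} is equivalent to the Petersen colouring conjecture (Conjecture~\ref{conj:pcol}), whose minimal counterexample is a weak snark, it suffices to exhibit a normal $5$-edge colouring of every weak snark. The trivial case comes first: if $G$ is colourable then a proper $3$-edge colouring is already a normal colouring, since every edge is poor, so we may assume $G$ is uncolourable. If $G$ has a nontrivial $2$- or $3$-edge cut, one would split $G$ along the cut into strictly smaller cubic graphs (inserting the usual edge or triangle gadget to restore cubicity), colour the pieces normally by induction, and then check that the two partial colourings can be glued across the cut, permuting the five colours on one side if necessary, so that every edge meeting the cut is again rich or poor. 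Making this gluing precise is the step that upgrades the well-known reductions for $P$-colourability to the level of normal colourings.

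The core difficulty is the cyclically $4$-edge-connected case, i.e.\ the weak snarks, where neither cut reduction applies. The natural idea here is to exploit our generation algorithm: every weak snark on $n$ vertices arises from a cubic graph on $n-2$ vertices by an \eio, so one would like to transform a normal $5$-edge colouring of the parent into one of the child by recolouring only the two subdivided edges and a bounded region around them. The obstacle is that the parent is typically colourable, so its ``normal'' colouring is merely a proper $3$-edge colouring in which no edge is rich; after the \eio such a colouring need not be normal near the new edge, and it is unclear whether a bounded local recolouring can always repair this while keeping the colouring proper and normal. An alternative would be to pass through the flow reformulations Jaeger associated with this conjecture and derive Conjecture~\ref{conj:pcol2} from a sufficiently strong statement about nowhere-zero flows and even subgraphs, in the spirit of Conjecture~\ref{conj:o5cdcc}.

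I expect the cyclically $4$-edge-connected case to be the real obstacle: a full proof of Conjecture~\ref{conj:pcol2} would in particular settle Conjecture~\ref{conj:o5cdcc} and hence the $5$-cycle double cover conjecture, so it is out of reach with present techniques. The realistic, and here the actual, goal is therefore to verify Conjecture~\ref{conj:pcol2} for all the snarks produced by our generator: for each snark $G$ on $n \leq 36$ vertices one searches for a proper $5$-edge colouring in which every edge is rich or poor, for instance by backtracking over the edges in an order guided by a spanning tree and pruning as soon as an edge is forced to be neither rich nor poor. By the reductions above this settles both equivalent forms of the conjecture for every cubic graph of these orders, and the same search produces explicit $P$-colourings, from which one reads off that the Berge--Fulkerson conjecture has no counterexample of order at most $36$.
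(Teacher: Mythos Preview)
The statement is an open conjecture, not a theorem: the paper offers no proof, and none is known. Your final paragraph correctly recognises this and aligns with what the paper actually does, namely a computational verification recorded in Observation~\ref{obs:pcol}. In that sense the ``realistic goal'' you describe is exactly the paper's approach.

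Two points deserve correction. First, your reduction shows only that a minimal counterexample is a \emph{weak} snark (girth at least~$4$), not a snark (girth at least~$5$). Consequently, checking the snarks on $n\le 36$ vertices does not, by itself, rule out counterexamples among all cubic graphs of those orders: one must also check the weak snarks of girth~$4$. The paper is careful here: weak snarks were generated only up to $n=34$, so Observation~\ref{obs:pcol} claims the conjecture for all bridgeless cubic graphs only on $n\le 34$, and separately records that no \emph{snark} on $36$ vertices is a counterexample. Your last sentence overstates what the computation proves for $n=36$.

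Second, the first two paragraphs are not a proof sketch but a list of hopes. The gluing of normal $5$-edge colourings across a nontrivial $3$-edge cut is not something one can simply ``check'': after permuting colours you need the three cut edges to carry colour patterns that make every edge touching the cut rich or poor on both sides simultaneously, and it is not clear that compatible patterns always exist. Likewise the idea of propagating a normal colouring through an edge-insertion operation fails for the reason you yourself note: the parent is typically $3$-edge colourable, so its normal colouring has no rich edges and provides no leverage. These paragraphs should be presented as motivation for why the problem is hard, not as steps of an argument.
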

We have tested these conjectures for the small snarks and weak snarks.
\begin{observation}\label{obs:pcol}
	There are no counterexamples to Conjecture \ref{conj:pcol} on
        $n\leq 34$ vertices, and none of the snarks on $n=36$ vertices
        is a counterexample either. The only snark with $n\leq 36$
        vertices which has a normal 5-edge colouring with only rich
        edges is the Petersen graph.
\end{observation}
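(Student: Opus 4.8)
The plan is to turn the statement into a finite computation over our complete lists of small snarks and weak snarks, using two reductions. The first is structural and was already noted just before the statement: across a bridge, a nontrivial $2$-edge-cut or a nontrivial $3$-edge-cut (in particular a triangle) one splits a cubic graph into strictly smaller cubic graphs, $P$-colours them by induction, and glues the $P$-colourings together using the edge-transitivity of $P$. Hence a minimal counterexample to the Petersen colouring conjecture (Conjecture~\ref{conj:pcol}) is cyclically $4$-edge-connected and triangle-free, i.e.\ a weak snark. Consequently, to rule out a counterexample of order $n\le 34$ it suffices to exhibit a $P$-colouring for every weak snark on at most $34$ vertices, and for $n=36$ — where only the girth-$\ge 5$ snarks are available — it suffices to $P$-colour every snark on $36$ vertices; all of these graphs come from the generation described in Section~3.

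The second reduction is a change of model. Rather than searching for a map $E(G)\to E(P)$ directly, we use Jaeger's equivalent formulation, Conjecture~\ref{conj:pcol2} \cite{Jae:85}: $G$ has a $P$-colouring if and only if it has a normal $5$-edge colouring, i.e.\ a proper colouring $c\colon E(G)\to\{1,\dots,5\}$ in which no edge $e=\{u,v\}$ satisfies $|c(\omega_G(u)\cup\omega_G(v))|=4$. We find such a colouring by backtracking over an edge ordering, maintaining for each uncoloured edge the set of colours still admissible under properness, and pruning a branch as soon as a colour assignment would force some edge at distance at most two to become neither poor nor rich. Symmetry is reduced by fixing colours on a few edges around an orbit representative of $\mathrm{Aut}(G)$. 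Because normal colourings turn out to be plentiful, this search terminates almost immediately on each graph; running it to completion on every graph in the two lists — and independently in the direct $E(P)$ model on separate machines in the two groups — yields the first two sentences of the observation.

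For the last sentence, observe that a normal colouring with no poor edge is exactly a strong $5$-edge colouring, equivalently a partition of $E(G)$ into five induced matchings. We rerun the same backtracking search with the stronger invariant that every edge must remain rich, now requiring the search either to produce such a colouring or to prove exhaustively that none exists, and we record the answer for all snarks on $n\le 36$ vertices. The outcome is that the Petersen graph is the unique snark of this order admitting such a colouring.

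The only real obstacle is the size of the computation rather than anything conceptual: the number of snarks on $36$ vertices is very large (see Table~\ref{sfig1}), so the strong-colouring variant, which in the negative case must be carried out to exhaustion, needs the constraint propagation and the automorphism-based symmetry breaking above to be feasible within the available computing time. As elsewhere in the paper, the independent re-implementation on different hardware is what underwrites the negative parts of the statement (absence of counterexamples, and uniqueness of the Petersen graph for the rich-only colouring).
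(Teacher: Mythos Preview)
Your proposal is correct and matches the paper's approach: the observation is a computational result, and the paper's entire ``proof'' is the single sentence preceding it (``We have tested these conjectures for the small snarks and weak snarks''), together with the reduction to weak snarks already stated in the text. You supply considerably more algorithmic detail than the paper does, but the underlying method---computer verification over the generated lists, via the normal $5$-edge colouring formulation---is the same.
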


As mentioned earlier, Jaeger's conjectures imply the 5-CDCC and so Observation \ref{obs:pcol} provides a partial verification of Observation \ref{obs:o5}. Another related conjecture is: 
\begin{conjecture}[Fulkerson's conjecture \cite{Fu:71}]\label{conj:fu}
	Every bridgeless cubic graph has six perfect matchings such that any edge is a member of exactly two of them.
\end{conjecture}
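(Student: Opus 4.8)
The plan is to produce the six matchings as a pullback along a Petersen colouring, thereby reducing the statement to Conjecture~\ref{conj:pcol}. Suppose a bridgeless cubic graph $G$ admits a $P$-colouring $f\colon E(G)\to E(P)$; by definition $f$ carries each $\omega_G(v)$ bijectively onto some $\omega_P(\varphi(v))$. The Petersen graph has exactly six perfect matchings $N_1,\dots,N_6$, and by its edge-transitivity every edge of $P$ lies in exactly two of them (a count of $6\cdot 5=2\cdot 15$ edge-incidences). I would then set $M_i=f^{-1}(N_i)$ for $i=1,\dots,6$. Each $M_i$ is a perfect matching of $G$, since $N_i\cap\omega_P(\varphi(v))$ is a single edge whose unique $f$-preimage in $\omega_G(v)$ is the one edge of $M_i$ at $v$; and since $f(e)$ lies in exactly two of the $N_i$ for every $e\in E(G)$, each edge of $G$ lies in exactly two of the $M_i$. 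Thus $M_1,\dots,M_6$ would witness Fulkerson's property, and because the argument is uniform over all bridgeless cubic graphs it reduces Fulkerson's conjecture to the existence of a $P$-colouring.

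The colourable case is the easy instance of this reduction: if $G$ is $3$-edge-colourable one lets $f$ send the three colour classes to the three edges meeting a fixed vertex $w$ of $P$, which is a $P$-colouring, and the pullback then returns each colour class exactly twice. So all the content sits in the uncolourable case, that is, in the snarks, exactly as for the other problems in this family recalled in the introduction.

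The hard part is that Conjecture~\ref{conj:pcol} is itself open, so this route cannot at present give an unconditional proof: the implication ``Petersen colouring $\Rightarrow$ Fulkerson'' only transfers the difficulty. The single genuinely unconditional input I can see is fractional. By Edmonds' matching-polytope theorem the uniform vector $\tfrac13\mathbf 1$ lies in the perfect matching polytope of every bridgeless cubic graph, because the tight odd cuts have odd cardinality at least $3$; this gives a distribution over perfect matchings covering each edge with weight $\tfrac13$, i.e.\ a \emph{fractional} Fulkerson cover. The decisive obstacle is then to round this to an integral family of exactly six perfect matchings, and this integrality step is precisely the unresolved combinatorial core of the conjecture, which I do not expect to fall to elementary means. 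Failing either such a rounding or a proof of Conjecture~\ref{conj:pcol}, the strongest conclusion one can reach is for bounded orders: by Observation~\ref{obs:pcol} every snark on $n\le 36$ vertices has a $P$-colouring, so the pullback above yields a Fulkerson cover for each of them and shows that Fulkerson's conjecture has no counterexample of these orders.
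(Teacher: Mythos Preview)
This statement is an open conjecture and the paper does not prove it; it merely cites Jaeger's observation that a $P$-colouring yields six perfect matchings with the Fulkerson property, and then combines this with Observation~\ref{obs:pcol} to obtain the corollary that no counterexample exists on $n\le 36$ vertices. Your proposal reproduces exactly this reduction---the pullback $M_i=f^{-1}(N_i)$ is Jaeger's implication written out in detail---and arrives at the same bounded-order conclusion while correctly flagging that the general case remains open; your additional paragraph on the fractional cover via Edmonds' matching-polytope theorem is sound extra commentary that the paper does not include, but it does not alter the approach or the conclusion.
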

The conjecture is trivially true for colourable cubic graphs, and it is not difficult to show that a minimum counterexample must be a snark. As noted by Jaeger \cite{Jae:85}, any graph with a $P$-colouring has six perfect matchings satisfying the condition in Conjecture \ref{conj:fu}. So we have the following corollary 
\begin{corollary}
	There are no counterexamples to Conjecture \ref{conj:fu} on  $n\leq 36$ vertices.
\end{corollary}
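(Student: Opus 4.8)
The plan is to reduce to the case of a snark and then quote Observation \ref{obs:pcol} together with Jaeger's observation that a $P$-colouring yields a Fulkerson cover. First I would dispose of the colourable graphs directly: if $G$ is a bridgeless cubic graph with a proper $3$-edge-colouring, the three colour classes are perfect matchings $M_1,M_2,M_3$ partitioning $E(G)$, and the multiset $M_1,M_1,M_2,M_2,M_3,M_3$ is a system of six perfect matchings covering each edge exactly twice. So colourable graphs never violate Conjecture \ref{conj:fu}, and only uncolourable graphs can be counterexamples.

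Next, suppose for contradiction that some bridgeless cubic graph of order at most $36$ is a counterexample to Conjecture \ref{conj:fu}, and let $G$ be a counterexample with the fewest vertices among \emph{all} counterexamples, so that in particular $|V(G)|\le 36$. I would then run through the standard reductions — reducing a $2$-edge-cut, collapsing the sides of a nontrivial $3$-edge-cut, and contracting triangles and (suitably reconnecting) short cycles — checking in each case that a Fulkerson cover of the resulting smaller cubic graph lifts back to one of $G$; since $G$ is a minimum counterexample, none of these reductions applies, and hence $G$ is a snark. Because $|V(G)|\le 36$, Observation \ref{obs:pcol} supplies a $P$-colouring $f\colon E(G)\to E(P)$, where $P$ is the Petersen graph. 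As Jaeger noted in \cite{Jae:85}, such an $f$ pulls a Fulkerson cover of $P$ back to one of $G$: the Petersen graph has six perfect matchings covering each of its edges exactly twice, for each such matching $N$ the preimage $f^{-1}(N)$ is a perfect matching of $G$ (at every vertex the three incident edges map bijectively onto the three edges at a vertex of $P$, exactly one of which lies in $N$), and since $f(e)$ lies in exactly two of the six matchings of $P$, every edge $e$ of $G$ lies in exactly two of the six preimages. This contradicts the choice of $G$, so no counterexample of order at most $36$ exists.

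The step that needs the most care is the reduction to a snark: one must verify that a minimum counterexample to Conjecture \ref{conj:fu} really has girth at least $5$ and is cyclically $4$-edge-connected, not merely girth at least $4$, because Observation \ref{obs:pcol} records the $P$-colouring property for all $36$-vertex graphs only in the case of \emph{snarks} (the Petersen colouring conjecture is verified for \emph{every} bridgeless cubic graph only up to $34$ vertices). Once that reduction is pinned down, the remaining work — the colourable case and the pullback of matchings through a $P$-colouring — is entirely routine, and the corollary follows.
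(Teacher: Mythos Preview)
Your approach is essentially the same as the paper's: dispose of colourable graphs, argue that a minimum counterexample to Fulkerson's conjecture must be a snark, invoke Observation~\ref{obs:pcol} to obtain a $P$-colouring, and then apply Jaeger's observation that a $P$-colouring pulls back a Fulkerson cover. Your explicit identification of the subtlety at $n=36$ (that Observation~\ref{obs:pcol} covers only snarks there, so one really needs the girth-$5$ reduction and not just the weak-snark reduction) is a point the paper handles only by the assertion ``it is not difficult to show that a minimum counterexample must be a snark'', but otherwise the arguments coincide.
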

Berge proposed a seemingly weaker form of Fulkerson's conjecture:
\begin{conjecture}[Berge]
	Given a 2-connected cubic graph $G$. There exist five perfect matchings in $G$ such that every edge is part of at least one of them.
\end{conjecture}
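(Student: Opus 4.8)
The plan is to deduce Berge's conjecture from Fulkerson's conjecture (Conjecture~\ref{conj:fu}), using that for cubic graphs the two connectivity hypotheses coincide: a cutvertex in a cubic graph would force one of its three incident edges to be a bridge, so for cubic graphs ``$2$-connected'' and ``bridgeless'' describe exactly the same class. First I would dispose of the colourable case unconditionally. A proper $3$-edge-colouring of $G$ partitions $E(G)$ into three perfect matchings, and already these three matchings cover every edge, so five (indeed three) suffice; thus the only substance lies in the uncolourable graphs, the snarks.

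For the remaining graphs the key step is the elementary observation that a Fulkerson cover yields a Berge cover by deletion, and this works uniformly in $n$. Suppose $G$ admits six perfect matchings $M_1,\dots,M_6$ with every edge lying in exactly two of them, and discard a single matching, say $M_6$. Every edge $e$ lies in exactly two of the $M_i$, and removing $M_6$ destroys at most one of these memberships, so $e$ still lies in at least one of $M_1,\dots,M_5$. Hence $\{M_1,\dots,M_5\}$ is a family of five perfect matchings covering every edge, i.e.\ a Berge cover. Since the argument is independent of the order of $G$, it settles every $2$-connected cubic graph at once, establishing the implication Conjecture~\ref{conj:fu} $\Rightarrow$ Berge's conjecture.

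The main obstacle is that this route proves Berge's conjecture only conditionally: Fulkerson's conjecture is itself open, and the deletion argument gives no unconditional grip on the full class. Nor should one expect a materially easier unconditional proof, since the two statements are in fact equivalent --- the harder converse, that a Berge cover can always be upgraded to a Fulkerson cover, was established by Mazzuoccolo --- so Berge's conjecture is exactly as deep as Fulkerson's. For the finite purposes of this paper the reduction is nonetheless decisive: combined with our verification that Fulkerson's conjecture has no counterexample on $n\le 36$ vertices, it immediately rules out counterexamples to Berge's conjecture in the same range.
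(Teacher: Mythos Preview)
Your proposal is correct and matches the paper's treatment: the statement is an open conjecture, not a theorem, and both you and the paper handle it by reducing to Fulkerson's conjecture and invoking the finite verification for $n\le 36$. You spell out the easy direction (drop one of the six matchings) explicitly, while the paper simply cites Mazzuoccolo's equivalence result and the earlier corollary; the content is the same. One terminological quibble: after disposing of the colourable case the remaining bridgeless cubic graphs are not all snarks in the paper's sense (they may have small girth or cyclic $3$-cuts), but since your deletion argument uses nothing beyond the Fulkerson hypothesis this does not affect the validity.
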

However, as proven in \cite{Maz10}, this conjecture is in fact equivalent to that of Fulkerson, and by our earlier observation true for graphs with $n\leq 36$ vertices.

\section*{Acknowledgments}
This research was conducted using the resources of High Performance Computing Center North (HPC2N), C3SE, PDC, NSC, and   Swegrid in Sweden, and the Stevin supercomputer Infrastructure at Ghent University. 

Jan Goedgebeur is supported by a PhD grant of the Research Foundation of Flanders (FWO). Jonas H\"agglund is supported by the National Graduate School in Scientific Computing (NGSSC).

\providecommand{\bysame}{\leavevmode\hbox to3em{\hrulefill}\thinspace}
\providecommand{\MR}{\relax\ifhmode\unskip\space\fi MR }
\providecommand{\MRhref}[2]{%
  \href{http://www.ams.org/mathscinet-getitem?mr=#1}{#2}
}
\providecommand{\href}[2]{#2}


\section*{Appendices}
The graphs in these appendices are given in the following format: Each list corresponds to an adjacency list for the graph where only higher numbered neighbours are listed. That is, first comes the neighbours of vertex 1, next the higher numbered neighbours of vertex 2 and so on.

All material in the appendices is also available as computer readable text in the Arxiv preprint version of this paper with identifier: arXiv:1206.6690.

\subsection{A cubic graph where all 2-factors only consist of odd cycles}\label{app:ALS}
{\ \\}This graph is the smallest counterexample to Conjecture \ref{conj:ALS}:\\
\begin{tiny}
\{2, 3, 4, 5, 6, 7, 8, 9, 10, 7, 9, 8, 10, 11, 12, 13, 14, 15, 16, 15, 17, 18, 19, 18, 20, 18, 21, 
 22, 21, 23, 23, 24, 22, 24, 25, 26, 26, 25, 26\}
 \end{tiny}
\subsection{Cyclically 5-connected snarks with stable cycles}\label{app:stab}
{\ \\}The four cyclically 5-connected stable snarks on 32 vertices:\\
\begin{tiny}
\{2, 3, 4, 5, 6, 7, 8, 9, 10, 7, 9, 11, 12, 13, 11, 14, 14, 12, 15, 16, 17, 18, 19, 20, 21, 22, 23, 
24, 25, 26, 21, 27, 22, 28, 29, 30, 28, 31, 25, 29, 27, 30, 30, 28, 32, 31, 32, 32\}\\
\{2, 3, 4, 5, 6, 7, 8, 9, 10, 7, 9, 11, 12, 13, 11, 14, 14, 12, 15, 16, 17, 18, 19, 20, 21, 22, 23, 
24, 25, 26, 21, 27, 22, 28, 29, 30, 28, 31, 25, 29, 30, 31, 30, 28, 32, 31, 32, 32\}\\
\{2, 3, 4, 5, 6, 7, 8, 9, 10, 7, 9, 11, 12, 13, 11, 14, 14, 15, 16, 17, 13, 18, 19, 20, 21, 22, 23,
 24, 25, 26, 27, 28, 21, 23, 29, 30, 24, 25, 31, 31, 27, 29, 28, 30, 32, 29, 32, 32\}\\
\{2, 3, 4, 5, 6, 7, 8, 9, 10, 7, 9, 11, 12, 13, 11, 14, 14, 15, 16, 17, 13, 18, 19, 20, 21, 22, 23, 
24, 25, 26, 27, 28, 21, 23, 29, 30, 24, 31, 32, 31, 27, 28, 29, 30, 31, 32, 30, 32\}
\end{tiny}

\subsection{Strong snarks}\label{app:strong}
{\ \\}The 7 strong snarks on 34 vertices:\\
\begin{tiny}
\{5, 9, 13, 12, 15, 25, 5, 8, 10, 6, 8, 9, 6, 15, 8, 14, 19, 10, 12, 20, 26, 29, 27, 14, 18, 25, 30, 
17, 26, 28, 21, 23, 22, 23, 21, 24, 22, 24, 22, 24, 26, 31, 33, 32, 33, 31, 34, 32, 34, 32, 34\}\\
\{4, 11, 15, 12, 19, 21, 5, 10, 25, 14, 16, 6, 11, 17, 23, 9, 21, 27, 16, 25, 29, 10, 30, 20, 12, 
22, 14, 15, 26, 17, 18, 18, 18, 20, 23, 22, 24, 24, 24, 26, 28, 31, 33, 32, 33, 31, 34, 32, 34, 32, 34\}\\
\{4, 11, 15, 12, 19, 21, 5, 8, 10, 14, 16, 6, 11, 17, 23, 8, 9, 29, 16, 13, 28, 20, 26, 12, 22, 14, 
15, 17, 18, 18, 18, 20, 23, 22, 24, 25, 24, 24, 26, 27, 30, 31, 33, 32, 33, 31, 34, 32, 34, 32, 34\}\\
\{5, 9, 27, 6, 12, 15, 5, 8, 10, 6, 8, 9, 6, 8, 28, 31, 10, 12, 12, 21, 23, 14, 18, 29, 15, 28, 20, 
20, 22, 23, 18, 22, 30, 24, 20, 21, 24, 22, 24, 26, 29, 33, 30, 31, 28, 33, 32, 34, 32, 34, 34\}\\
\{5, 9, 25, 6, 12, 15, 5, 8, 10, 6, 8, 9, 6, 8, 14, 26, 10, 12, 12, 21, 23, 14, 18, 33, 15, 20, 20, 
22, 23, 18, 22, 28, 24, 20, 21, 24, 22, 24, 26, 27, 31, 29, 32, 30, 32, 30, 33, 31, 34, 34, 34\}\\
\{5, 9, 25, 6, 12, 15, 5, 8, 10, 6, 8, 9, 6, 8, 16, 26, 10, 12, 13, 16, 19, 21, 14, 29, 23, 28, 16, 
20, 18, 19, 21, 20, 23, 22, 22, 24, 24, 24, 26, 27, 30, 31, 33, 32, 33, 31, 34, 32, 34, 32, 34\}\\
\{5, 9, 13, 6, 12, 33, 5, 8, 10, 6, 8, 9, 6, 8, 14, 17, 10, 12, 16, 21, 28, 25, 14, 23, 26, 16, 26, 
31, 20, 18, 22, 19, 21, 20, 23, 22, 24, 24, 24, 26, 30, 28, 29, 33, 32, 30, 31, 32, 34, 34, 34\}\\
\end{tiny}
{\ \\}The 25 strong snarks on 36 vertices:\\ 
\begin{tiny}
\{13, 15, 17, 25, 27, 31, 8, 10, 20, 8, 16, 17, 6, 11, 32, 18, 29, 8, 9, 19, 10, 13, 12, 12, 15, 27, 14, 16, 18, 16, 18, 21, 24, 22, 26, 22, 25, 23, 24, 28, 26, 26, 28, 30, 33, 35, 34, 35, 33, 36, 34, 36, 34, 36\}\\
\{13, 15, 17, 12, 25, 27, 8, 10, 20, 8, 16, 17, 6, 11, 31, 18, 30, 8, 9, 19, 10, 13, 12, 12, 15, 14, 16, 18, 16, 18, 21, 24, 22, 26, 22, 25, 23, 24, 28, 26, 26, 28, 32, 29, 33, 35, 34, 35, 33, 36, 34, 36, 34, 36\}\\
\{13, 15, 17, 6, 27, 35, 8, 10, 21, 8, 16, 17, 11, 21, 23, 18, 19, 8, 33, 36, 32, 34, 36, 28, 30, 15, 20, 26, 27, 30, 14, 32, 16, 18, 16, 18, 22, 23, 22, 24, 22, 24, 26, 26, 28, 29, 28, 30, 34, 32, 33, 35, 34, 36\}\\
\{13, 15, 17, 12, 19, 21, 5, 10, 25, 8, 16, 17, 6, 11, 18, 23, 9, 21, 29, 25, 27, 10, 31, 20, 12, 15, 22, 14, 26, 16, 18, 16, 18, 20, 23, 22, 24, 24, 24, 26, 28, 28, 32, 30, 33, 35, 34, 35, 34, 36, 33, 36, 34, 36\}\\
\{13, 15, 17, 12, 21, 29, 5, 8, 10, 8, 16, 17, 6, 11, 18, 27, 8, 9, 19, 10, 13, 22, 12, 31, 20, 14, 16, 18, 16, 28, 18, 23, 25, 24, 25, 23, 26, 24, 26, 24, 26, 28, 35, 33, 30, 32, 33, 36, 34, 36, 34, 35, 34, 36\}\\
\{13, 15, 17, 6, 12, 21, 5, 8, 27, 8, 16, 17, 6, 11, 18, 8, 9, 31, 13, 30, 22, 27, 32, 12, 15, 20, 14, 16, 18, 16, 18, 23, 25, 28, 24, 25, 23, 26, 24, 26, 24, 26, 28, 29, 33, 35, 34, 35, 33, 36, 34, 36, 34, 36\}\\
\{13, 15, 17, 6, 12, 21, 5, 8, 10, 8, 16, 17, 6, 11, 18, 8, 9, 31, 13, 30, 22, 28, 12, 15, 20, 14, 16, 18, 16, 18, 23, 25, 27, 24, 25, 23, 26, 24, 26, 24, 26, 28, 29, 32, 33, 35, 33, 36, 34, 35, 34, 36, 34, 36\}\\
\{5, 9, 13, 15, 27, 31, 8, 10, 35, 6, 8, 36, 6, 35, 19, 8, 16, 17, 10, 36, 25, 13, 16, 18, 20, 29, 32, 14, 15, 17, 16, 18, 30, 21, 24, 22, 24, 22, 23, 25, 26, 27, 26, 26, 33, 30, 32, 33, 30, 34, 32, 34, 34, 36\}\\
\{5, 9, 13, 27, 29, 31, 5, 8, 10, 6, 8, 9, 6, 19, 8, 16, 17, 10, 25, 13, 18, 33, 20, 29, 32, 14, 15, 34, 16, 31, 33, 18, 34, 30, 21, 24, 22, 24, 23, 35, 25, 35, 26, 27, 36, 26, 36, 28, 30, 32, 30, 32, 34, 36\}\\
\{5, 9, 25, 15, 31, 33, 5, 8, 10, 6, 8, 9, 6, 32, 8, 22, 27, 10, 12, 16, 26, 28, 24, 34, 17, 19, 22, 18, 19, 24, 17, 20, 18, 20, 18, 20, 25, 28, 29, 29, 32, 33, 35, 35, 26, 30, 28, 30, 30, 32, 36, 34, 36, 36\}\\
\{5, 9, 13, 12, 15, 25, 5, 8, 10, 6, 8, 9, 6, 29, 8, 14, 19, 10, 12, 20, 26, 31, 30, 14, 18, 25, 29, 32, 17, 26, 28, 21, 23, 22, 23, 21, 24, 22, 24, 22, 24, 26, 30, 33, 35, 34, 35, 30, 33, 36, 34, 36, 34, 36\}\\
\{5, 9, 13, 12, 15, 25, 5, 8, 10, 6, 8, 9, 6, 15, 8, 14, 19, 10, 12, 20, 26, 31, 27, 14, 18, 25, 28, 17, 26, 30, 21, 23, 22, 23, 21, 24, 22, 24, 22, 24, 26, 28, 29, 32, 33, 35, 34, 35, 33, 36, 34, 36, 34, 36\}\\
\{5, 9, 13, 12, 14, 27, 5, 8, 10, 6, 8, 9, 6, 25, 8, 14, 19, 10, 12, 16, 20, 28, 26, 14, 18, 16, 27, 31, 17, 21, 23, 22, 23, 21, 24, 22, 24, 22, 24, 26, 29, 32, 28, 30, 33, 35, 34, 35, 33, 36, 34, 36, 34, 36\}\\
\{5, 9, 13, 12, 14, 15, 5, 8, 10, 6, 8, 9, 6, 25, 8, 14, 19, 10, 12, 16, 27, 29, 26, 14, 18, 16, 28, 17, 21, 23, 22, 23, 21, 24, 22, 24, 27, 22, 24, 26, 30, 31, 28, 32, 33, 35, 34, 35, 33, 36, 34, 36, 34, 36\}\\
\{5, 9, 13, 12, 14, 15, 5, 8, 10, 6, 8, 9, 6, 25, 8, 14, 19, 10, 12, 16, 20, 27, 26, 14, 18, 16, 28, 17, 21, 23, 22, 23, 21, 24, 22, 24, 22, 24, 26, 29, 31, 28, 30, 32, 33, 35, 34, 35, 34, 36, 33, 36, 34, 36\}\\
\{5, 9, 13, 12, 15, 19, 5, 8, 10, 6, 8, 9, 6, 15, 8, 14, 31, 10, 12, 12, 25, 27, 14, 18, 19, 24, 22, 26, 27, 29, 33, 35, 34, 35, 20, 21, 30, 22, 23, 29, 25, 28, 26, 28, 26, 28, 30, 32, 33, 36, 34, 36, 34, 36\}\\
\{5, 9, 13, 12, 15, 19, 5, 8, 10, 6, 8, 9, 6, 15, 8, 14, 17, 10, 12, 12, 25, 27, 14, 18, 19, 24, 22, 26, 27, 18, 31, 30, 20, 21, 32, 22, 23, 29, 25, 28, 26, 28, 26, 28, 33, 35, 34, 35, 33, 36, 34, 36, 34, 36\}\\
\{5, 9, 25, 6, 12, 15, 5, 8, 10, 6, 8, 9, 6, 8, 26, 27, 10, 12, 13, 16, 19, 21, 14, 31, 23, 30, 16, 20, 26, 18, 19, 21, 20, 23, 22, 22, 24, 24, 24, 26, 28, 28, 32, 29, 33, 35, 34, 35, 33, 36, 34, 36, 34, 36\}\\
\{5, 9, 27, 6, 12, 15, 5, 8, 10, 6, 8, 9, 6, 8, 16, 28, 10, 12, 13, 16, 21, 17, 14, 31, 20, 30, 16, 18, 18, 23, 25, 20, 23, 26, 22, 24, 26, 24, 25, 24, 26, 28, 29, 32, 33, 35, 34, 35, 33, 36, 34, 36, 34, 36\}\\
\{5, 9, 27, 6, 12, 15, 5, 8, 10, 6, 8, 9, 6, 8, 14, 28, 10, 12, 13, 16, 21, 17, 14, 35, 18, 18, 22, 20, 30, 18, 19, 23, 25, 24, 25, 23, 26, 24, 26, 24, 26, 28, 29, 33, 31, 34, 32, 34, 32, 35, 33, 36, 36, 36\}\\
\{5, 9, 13, 6, 12, 29, 5, 8, 10, 6, 8, 9, 6, 8, 14, 17, 10, 12, 16, 21, 28, 25, 14, 23, 26, 16, 26, 31, 20, 18, 22, 19, 21, 20, 23, 22, 24, 24, 24, 26, 30, 28, 33, 35, 32, 30, 33, 36, 34, 35, 34, 36, 34, 36\}\\
\{5, 9, 13, 6, 12, 29, 5, 8, 10, 6, 8, 9, 6, 8, 14, 16, 10, 12, 13, 16, 21, 27, 14, 20, 18, 28, 33, 28, 18, 23, 35, 25, 20, 23, 26, 22, 24, 26, 24, 25, 24, 26, 28, 32, 30, 31, 34, 35, 33, 36, 34, 36, 34, 36\}\\
\{5, 9, 25, 6, 12, 15, 5, 8, 10, 6, 8, 9, 6, 8, 14, 26, 10, 12, 12, 21, 23, 14, 18, 27, 15, 20, 20, 22, 23, 18, 22, 28, 24, 20, 21, 24, 22, 24, 26, 29, 31, 28, 32, 30, 33, 35, 34, 35, 34, 36, 33, 36, 34, 36\}\\
\{5, 9, 11, 6, 25, 29, 5, 8, 10, 6, 8, 9, 6, 8, 11, 13, 10, 12, 16, 14, 25, 21, 23, 16, 18, 20, 21, 24, 20, 18, 24, 27, 26, 20, 22, 23, 22, 26, 24, 31, 28, 28, 32, 30, 33, 35, 34, 35, 34, 36, 33, 36, 34, 36\}\\
\{5, 9, 11, 6, 12, 33, 5, 8, 10, 6, 8, 9, 6, 8, 11, 13, 10, 12, 16, 14, 15, 19, 28, 35, 16, 23, 28, 18, 20, 27, 25, 30, 22, 24, 21, 24, 22, 23, 25, 26, 26, 26, 28, 32, 32, 33, 36, 34, 36, 32, 34, 35, 34, 36\}\\
\end{tiny}

\subsection{Snarks  from Observation \ref{obs:noext}}\label{app:no5cdc}
{\ \\}The 68 snarks on $n=30$ vertices with 2-regular subgraphs which are part of a CDC but not part of any 5-CDC:\\
\begin{tiny}
\{2, 3, 4, 5, 6, 7, 8, 9, 10, 7, 9, 8, 10, 11, 12, 13, 14, 14, 15, 16, 17, 18, 19, 20, 16, 21, 18, 22, 23, 20, 21, 22, 24, 25, 26, 27, 28, 29, 30, 27, 29, 28, 30, 30, 29\}\\
\{2, 3, 4, 5, 6, 7, 8, 9, 10, 7, 9, 8, 10, 11, 12, 13, 14, 14, 15, 16, 17, 18, 19, 20, 16, 21, 18, 22, 23, 20, 21, 24, 25, 22, 26, 27, 28, 27, 29, 29, 30, 28, 30, 30, 29\}\\
\{2, 3, 4, 5, 6, 7, 8, 9, 10, 7, 9, 8, 10, 11, 12, 13, 14, 14, 15, 16, 17, 18, 19, 20, 16, 21, 18, 22, 23, 20, 21, 24, 25, 26, 24, 27, 26, 28, 29, 29, 30, 30, 28, 30, 29\}\\
\{2, 3, 4, 5, 6, 7, 8, 9, 10, 7, 9, 8, 10, 11, 12, 13, 14, 14, 15, 16, 17, 18, 19, 20, 16, 21, 18, 22, 23, 20, 21, 24, 25, 26, 24, 27, 28, 29, 28, 26, 30, 29, 29, 30, 30\}\\
\{2, 3, 4, 5, 6, 7, 8, 9, 10, 7, 9, 8, 10, 11, 12, 13, 14, 14, 15, 16, 17, 18, 19, 20, 16, 21, 18, 22, 23, 20, 21, 24, 25, 26, 26, 27, 28, 29, 25, 28, 30, 29, 28, 30, 30\}\\
\{2, 3, 4, 5, 6, 7, 8, 9, 10, 7, 9, 8, 11, 12, 13, 14, 15, 16, 15, 17, 18, 19, 20, 21, 22, 23, 24, 17, 25, 26, 24, 27, 25, 28, 22, 29, 23, 30, 30, 29, 28, 29, 27, 28, 30\}\\
\{2, 3, 4, 5, 6, 7, 8, 9, 10, 7, 9, 8, 11, 12, 13, 14, 15, 16, 15, 17, 18, 19, 20, 21, 22, 23, 24, 17, 25, 26, 24, 27, 28, 29, 22, 25, 23, 30, 30, 25, 28, 27, 28, 29, 30\}\\
\{2, 3, 4, 5, 6, 7, 8, 9, 10, 7, 9, 8, 11, 12, 13, 14, 15, 16, 15, 17, 18, 19, 20, 21, 22, 23, 24, 17, 25, 26, 24, 27, 28, 29, 22, 28, 23, 30, 30, 28, 29, 27, 29, 27, 30\}\\
\{2, 3, 4, 5, 6, 7, 8, 9, 10, 7, 9, 8, 11, 12, 13, 14, 15, 16, 15, 17, 18, 19, 20, 21, 22, 23, 24, 17, 25, 26, 27, 28, 29, 30, 22, 24, 23, 25, 25, 27, 29, 28, 30, 30, 29\}\\
\{2, 3, 4, 5, 6, 7, 8, 9, 10, 7, 9, 8, 11, 12, 13, 14, 15, 16, 15, 17, 18, 19, 20, 21, 22, 23, 24, 17, 25, 26, 27, 28, 29, 30, 22, 24, 23, 26, 26, 27, 29, 28, 30, 30, 29\}\\
\{2, 3, 4, 5, 6, 7, 8, 9, 10, 7, 9, 8, 11, 12, 13, 14, 15, 16, 17, 18, 15, 19, 20, 21, 22, 23, 24, 19, 25, 24, 26, 27, 28, 27, 22, 29, 23, 30, 30, 29, 28, 26, 28, 29, 30\}\\
\{2, 3, 4, 5, 6, 7, 8, 9, 10, 7, 9, 8, 11, 12, 13, 14, 15, 16, 17, 18, 15, 19, 20, 21, 22, 23, 24, 19, 25, 24, 26, 27, 28, 29, 22, 27, 23, 30, 30, 27, 28, 26, 28, 29, 30\}\\
\{2, 3, 4, 5, 6, 7, 8, 9, 10, 7, 9, 8, 11, 12, 13, 14, 15, 16, 17, 18, 15, 19, 20, 21, 22, 23, 24, 19, 25, 24, 26, 27, 28, 29, 22, 29, 23, 30, 30, 29, 27, 26, 27, 28, 30\}\\
\{2, 3, 4, 5, 6, 7, 8, 9, 10, 7, 9, 8, 11, 12, 13, 14, 15, 16, 17, 18, 15, 19, 20, 21, 22, 23, 24, 19, 25, 26, 27, 28, 29, 30, 22, 24, 23, 30, 30, 26, 28, 27, 29, 29, 28\}\\
\{2, 3, 4, 5, 6, 7, 8, 9, 10, 7, 9, 8, 11, 12, 13, 14, 15, 16, 17, 18, 15, 19, 20, 21, 22, 23, 24, 19, 25, 26, 27, 28, 29, 30, 22, 26, 23, 30, 30, 24, 28, 27, 29, 29, 28\}\\
\{2, 3, 4, 5, 6, 7, 8, 9, 10, 7, 9, 8, 11, 12, 13, 14, 15, 16, 17, 18, 19, 20, 21, 22, 23, 24, 17, 19, 25, 26, 25, 27, 28, 27, 26, 28, 23, 29, 24, 30, 30, 29, 29, 27, 30\}\\
\{2, 3, 4, 5, 6, 7, 8, 9, 10, 7, 9, 8, 11, 12, 13, 14, 15, 16, 17, 18, 19, 20, 21, 22, 23, 24, 17, 19, 25, 26, 25, 27, 28, 27, 26, 29, 23, 28, 24, 30, 30, 28, 29, 27, 30\}\\
\{2, 3, 4, 5, 6, 7, 8, 9, 10, 7, 9, 8, 11, 12, 13, 14, 15, 16, 17, 18, 19, 20, 21, 22, 23, 24, 17, 19, 25, 26, 25, 27, 28, 27, 26, 29, 23, 29, 24, 30, 30, 29, 28, 27, 30\}\\
\{2, 3, 4, 5, 6, 7, 8, 9, 10, 7, 9, 8, 11, 12, 13, 14, 15, 16, 17, 18, 19, 20, 21, 22, 23, 24, 17, 19, 25, 26, 27, 25, 28, 26, 27, 28, 23, 29, 24, 30, 30, 29, 27, 29, 30\}\\
\{2, 3, 4, 5, 6, 7, 8, 9, 10, 7, 9, 8, 11, 12, 13, 14, 15, 16, 17, 18, 19, 20, 21, 22, 23, 24, 17, 19, 25, 26, 27, 25, 28, 26, 27, 29, 23, 28, 24, 30, 30, 28, 27, 29, 30\}\\
\{2, 3, 4, 5, 6, 7, 8, 9, 10, 7, 9, 8, 11, 12, 13, 14, 15, 16, 17, 18, 19, 20, 21, 22, 23, 24, 17, 19, 25, 26, 27, 25, 28, 26, 27, 29, 23, 29, 24, 30, 30, 29, 27, 28, 30\}\\
\{2, 3, 4, 5, 6, 7, 8, 9, 10, 7, 9, 8, 11, 12, 13, 14, 15, 16, 17, 18, 19, 20, 21, 22, 23, 24, 17, 25, 26, 27, 26, 25, 28, 21, 23, 25, 27, 24, 23, 29, 29, 30, 28, 30, 30\}\\
\{2, 3, 4, 5, 6, 7, 8, 9, 10, 7, 9, 8, 11, 12, 13, 14, 15, 16, 17, 18, 19, 20, 21, 22, 23, 24, 17, 25, 26, 27, 26, 25, 28, 25, 27, 26, 29, 23, 29, 24, 30, 30, 29, 28, 30\}\\
\{2, 3, 4, 5, 6, 7, 8, 9, 10, 7, 9, 8, 11, 12, 13, 14, 15, 16, 17, 18, 19, 20, 21, 22, 23, 24, 17, 25, 26, 27, 26, 25, 28, 25, 27, 28, 29, 23, 29, 24, 30, 30, 29, 30, 28\}\\
\{2, 3, 4, 5, 6, 7, 8, 9, 10, 7, 9, 8, 11, 12, 13, 14, 15, 16, 17, 18, 19, 20, 21, 22, 23, 24, 19, 25, 26, 27, 21, 23, 25, 26, 27, 25, 28, 24, 23, 29, 29, 28, 30, 30, 30\}\\
\{2, 3, 4, 5, 6, 7, 8, 9, 10, 7, 9, 8, 11, 12, 13, 14, 15, 16, 17, 18, 19, 20, 21, 22, 23, 24, 19, 25, 26, 27, 25, 26, 27, 28, 27, 25, 29, 23, 28, 24, 30, 30, 28, 29, 30\}\\
\{2, 3, 4, 5, 6, 7, 8, 9, 10, 7, 9, 8, 11, 12, 13, 14, 15, 16, 17, 18, 19, 20, 21, 22, 23, 24, 19, 25, 26, 27, 25, 26, 28, 29, 27, 25, 28, 23, 29, 24, 30, 30, 29, 28, 30\}\\
\{2, 3, 4, 5, 6, 7, 8, 9, 10, 7, 9, 11, 12, 10, 11, 13, 14, 15, 16, 13, 17, 18, 19, 20, 21, 22, 23, 24, 25, 26, 27, 28, 23, 25, 26, 29, 24, 27, 28, 30, 26, 28, 29, 30, 30\}\\
\{2, 3, 4, 5, 6, 7, 8, 9, 10, 7, 9, 11, 12, 10, 11, 13, 14, 15, 16, 17, 18, 19, 20, 21, 22, 23, 24, 17, 23, 25, 23, 26, 21, 27, 22, 28, 28, 29, 27, 30, 27, 29, 28, 30, 30\}\\
\{2, 3, 4, 5, 6, 7, 8, 9, 10, 7, 9, 11, 12, 10, 11, 13, 14, 15, 16, 17, 18, 19, 20, 21, 22, 23, 24, 17, 23, 25, 23, 26, 21, 27, 28, 29, 28, 25, 30, 29, 30, 29, 27, 28, 30\}\\
\{2, 3, 4, 5, 6, 7, 8, 9, 10, 7, 9, 11, 12, 10, 13, 14, 15, 16, 13, 17, 18, 19, 18, 20, 21, 22, 23, 24, 25, 19, 24, 24, 26, 22, 27, 28, 29, 28, 27, 30, 27, 29, 28, 30, 30\}\\
\{2, 3, 4, 5, 6, 7, 8, 9, 10, 7, 9, 11, 12, 10, 13, 14, 15, 16, 13, 17, 18, 19, 18, 20, 21, 22, 23, 24, 25, 19, 26, 26, 27, 22, 24, 23, 28, 28, 29, 30, 27, 29, 30, 28, 30\}\\
\{2, 3, 4, 5, 6, 7, 8, 9, 10, 7, 9, 11, 12, 10, 13, 14, 15, 16, 13, 17, 18, 19, 18, 20, 21, 22, 23, 24, 25, 20, 22, 26, 21, 23, 23, 27, 28, 27, 29, 28, 30, 29, 30, 30, 29\}\\
\{2, 3, 4, 5, 6, 7, 8, 9, 10, 7, 9, 11, 12, 10, 13, 14, 15, 16, 13, 17, 18, 19, 18, 20, 21, 22, 23, 24, 25, 20, 22, 26, 21, 23, 27, 22, 28, 27, 29, 28, 30, 29, 30, 30, 29\}\\
\{2, 3, 4, 5, 6, 7, 8, 9, 10, 7, 9, 11, 12, 10, 13, 14, 15, 16, 13, 17, 18, 19, 18, 20, 21, 22, 23, 24, 25, 20, 26, 27, 21, 22, 22, 23, 28, 26, 29, 28, 30, 30, 29, 30, 29\}\\
\{2, 3, 4, 5, 6, 7, 8, 9, 10, 7, 9, 11, 12, 10, 13, 14, 15, 16, 13, 17, 18, 19, 18, 20, 21, 22, 23, 24, 25, 22, 26, 27, 20, 23, 22, 23, 28, 26, 29, 28, 30, 30, 29, 30, 29\}\\
\{2, 3, 4, 5, 6, 7, 8, 9, 10, 7, 9, 11, 12, 10, 13, 14, 15, 16, 13, 17, 18, 19, 18, 20, 21, 22, 23, 24, 25, 22, 26, 27, 20, 23, 22, 24, 28, 26, 29, 28, 30, 30, 28, 29, 30\}\\
\{2, 3, 4, 5, 6, 7, 8, 9, 10, 7, 9, 11, 12, 10, 13, 14, 15, 16, 13, 17, 18, 19, 18, 20, 21, 22, 23, 24, 25, 24, 26, 27, 25, 28, 22, 28, 23, 29, 29, 28, 30, 26, 27, 30, 30\}\\
\{2, 3, 4, 5, 6, 7, 8, 9, 10, 7, 9, 11, 12, 10, 13, 14, 15, 16, 13, 17, 18, 19, 18, 20, 21, 22, 23, 24, 25, 24, 26, 27, 28, 29, 22, 28, 23, 30, 30, 28, 29, 26, 30, 27, 29\}\\
\{2, 3, 4, 5, 6, 7, 8, 9, 10, 7, 9, 11, 12, 10, 13, 14, 15, 16, 13, 17, 18, 19, 20, 21, 22, 17, 23, 24, 25, 21, 24, 26, 25, 27, 21, 28, 26, 29, 27, 29, 30, 26, 28, 30, 30\}\\
\{2, 3, 4, 5, 6, 7, 8, 9, 10, 7, 9, 11, 12, 10, 13, 14, 15, 16, 13, 17, 18, 19, 20, 21, 22, 17, 23, 24, 25, 21, 24, 26, 27, 28, 21, 29, 26, 27, 28, 30, 27, 29, 30, 30, 29\}\\
\{2, 3, 4, 5, 6, 7, 8, 9, 10, 7, 9, 11, 12, 10, 13, 14, 15, 16, 13, 17, 18, 19, 20, 21, 22, 23, 24, 25, 26, 18, 25, 21, 25, 27, 23, 28, 29, 26, 30, 30, 27, 28, 29, 30, 29\}\\
\{2, 3, 4, 5, 6, 7, 8, 9, 10, 7, 9, 11, 12, 10, 13, 14, 15, 16, 13, 17, 18, 19, 20, 21, 22, 23, 24, 25, 26, 18, 25, 26, 23, 27, 21, 24, 27, 24, 28, 29, 30, 28, 30, 29, 30\}\\
\{2, 3, 4, 5, 6, 7, 8, 9, 10, 7, 9, 11, 12, 10, 13, 14, 15, 16, 13, 17, 18, 19, 20, 21, 22, 23, 24, 25, 26, 18, 25, 26, 27, 28, 21, 23, 27, 23, 29, 28, 30, 30, 29, 30, 29\}\\
\{2, 3, 4, 5, 6, 7, 8, 9, 10, 7, 9, 11, 12, 10, 13, 14, 15, 16, 13, 17, 18, 19, 20, 21, 22, 23, 24, 25, 26, 18, 25, 27, 25, 28, 21, 23, 29, 23, 28, 27, 30, 29, 30, 29, 30\}\\
\{2, 3, 4, 5, 6, 7, 8, 9, 10, 7, 9, 11, 12, 10, 13, 14, 15, 16, 13, 17, 18, 19, 20, 21, 22, 23, 24, 25, 26, 18, 25, 27, 25, 28, 21, 29, 23, 28, 30, 28, 27, 29, 29, 30, 30\}\\
\{2, 3, 4, 5, 6, 7, 8, 9, 10, 7, 9, 11, 12, 10, 13, 14, 15, 16, 13, 17, 18, 19, 20, 21, 22, 23, 24, 25, 26, 18, 25, 27, 25, 28, 21, 29, 23, 28, 30, 28, 27, 30, 29, 30, 29\}\\
\{2, 3, 4, 5, 6, 7, 8, 9, 10, 7, 9, 11, 12, 10, 13, 14, 15, 16, 13, 17, 18, 19, 20, 21, 22, 23, 24, 25, 26, 18, 25, 27, 25, 28, 26, 29, 23, 28, 24, 30, 29, 28, 30, 29, 30\}\\
\{2, 3, 4, 5, 6, 7, 8, 9, 10, 7, 9, 11, 12, 10, 13, 14, 15, 16, 17, 18, 19, 20, 17, 19, 21, 22, 23, 24, 25, 26, 25, 26, 27, 28, 21, 29, 23, 24, 30, 30, 28, 27, 30, 29, 29\}\\
\{2, 3, 4, 5, 6, 7, 8, 9, 10, 7, 9, 11, 12, 10, 13, 14, 15, 16, 17, 18, 19, 20, 17, 21, 19, 22, 23, 24, 25, 26, 23, 20, 25, 27, 26, 24, 28, 29, 30, 28, 29, 30, 29, 28, 30\}\\
\{2, 3, 4, 5, 6, 7, 8, 9, 10, 7, 9, 11, 12, 10, 13, 14, 15, 16, 17, 18, 19, 20, 17, 21, 19, 22, 23, 24, 25, 26, 23, 25, 27, 28, 22, 29, 24, 29, 27, 29, 30, 28, 27, 30, 30\}\\
\{2, 3, 4, 5, 6, 7, 8, 9, 10, 7, 9, 11, 12, 10, 13, 14, 15, 16, 17, 18, 19, 20, 17, 21, 22, 23, 24, 25, 26, 27, 19, 20, 26, 26, 28, 22, 29, 24, 25, 27, 28, 30, 29, 30, 30\}\\
\{2, 3, 4, 5, 6, 7, 8, 9, 10, 7, 9, 11, 12, 10, 13, 14, 15, 16, 17, 18, 19, 20, 17, 21, 22, 23, 24, 25, 26, 27, 19, 20, 26, 26, 28, 24, 29, 24, 27, 28, 30, 28, 29, 30, 30\}\\
\{2, 3, 4, 5, 6, 7, 8, 9, 10, 7, 9, 11, 12, 10, 13, 14, 15, 16, 17, 18, 19, 20, 17, 21, 22, 23, 24, 25, 26, 27, 24, 19, 22, 26, 23, 28, 25, 29, 27, 30, 29, 28, 28, 30, 30\}\\
\{2, 3, 4, 5, 6, 7, 8, 9, 10, 7, 9, 11, 12, 10, 13, 14, 15, 16, 17, 18, 19, 20, 17, 21, 22, 23, 24, 25, 26, 27, 24, 19, 22, 26, 23, 28, 25, 29, 30, 27, 29, 28, 28, 30, 30\}\\
\{2, 3, 4, 5, 6, 7, 8, 9, 10, 7, 9, 11, 12, 10, 13, 14, 15, 16, 17, 18, 19, 20, 17, 21, 22, 23, 24, 25, 26, 27, 24, 19, 26, 22, 26, 28, 25, 29, 30, 27, 28, 29, 28, 30, 30\}\\
\{2, 3, 4, 5, 6, 7, 8, 9, 10, 7, 9, 11, 12, 10, 13, 14, 15, 16, 17, 18, 19, 20, 17, 21, 22, 23, 24, 25, 26, 27, 24, 19, 26, 27, 22, 28, 25, 29, 30, 26, 28, 29, 30, 30, 29\}\\
\{2, 3, 4, 5, 6, 7, 8, 9, 10, 7, 9, 11, 12, 10, 13, 14, 15, 16, 17, 18, 19, 20, 17, 21, 22, 23, 24, 25, 26, 27, 28, 19, 26, 29, 22, 26, 23, 24, 24, 30, 28, 29, 28, 30, 30\}\\
\{2, 3, 4, 5, 6, 7, 8, 9, 10, 7, 9, 8, 11, 10, 12, 13, 14, 14, 15, 16, 17, 18, 19, 20, 16, 21, 18, 22, 23, 20, 21, 22, 24, 25, 26, 27, 28, 29, 30, 27, 29, 28, 30, 30, 29\}\\
\{2, 3, 4, 5, 6, 7, 8, 9, 10, 7, 9, 8, 11, 10, 12, 13, 14, 14, 15, 16, 17, 18, 19, 20, 16, 21, 18, 22, 23, 20, 21, 24, 25, 22, 26, 27, 28, 27, 29, 29, 30, 28, 30, 30, 29\}\\
\{2, 3, 4, 5, 6, 7, 8, 9, 10, 7, 9, 8, 11, 10, 12, 13, 14, 14, 15, 16, 17, 18, 19, 20, 16, 21, 18, 22, 23, 20, 21, 24, 25, 26, 24, 27, 28, 29, 28, 26, 30, 29, 29, 30, 30\}\\
\{2, 3, 4, 5, 6, 7, 8, 9, 10, 7, 9, 8, 11, 10, 12, 13, 14, 14, 15, 16, 17, 18, 19, 20, 16, 21, 18, 22, 23, 20, 21, 24, 25, 26, 26, 27, 28, 29, 25, 28, 30, 29, 28, 30, 30\}\\
\{2, 3, 4, 5, 6, 7, 8, 9, 10, 7, 9, 8, 11, 10, 12, 13, 14, 14, 15, 16, 17, 18, 19, 20, 16, 21, 18, 22, 23, 20, 22, 24, 23, 25, 26, 27, 28, 29, 30, 27, 29, 28, 30, 30, 29\}\\
\{2, 3, 4, 5, 6, 7, 8, 9, 10, 7, 9, 8, 11, 10, 12, 13, 14, 14, 15, 16, 17, 18, 19, 20, 16, 21, 18, 22, 23, 20, 24, 25, 22, 23, 26, 27, 28, 27, 29, 28, 30, 29, 30, 30, 29\}\\
\{2, 3, 4, 5, 6, 7, 8, 9, 10, 7, 9, 8, 11, 10, 12, 13, 14, 14, 15, 16, 17, 18, 19, 20, 16, 21, 18, 22, 23, 20, 24, 25, 22, 26, 27, 26, 24, 28, 29, 28, 30, 30, 28, 29, 30\}\\
\{2, 3, 4, 5, 6, 7, 8, 9, 10, 7, 9, 8, 11, 10, 12, 13, 14, 14, 15, 16, 17, 18, 19, 20, 18, 21, 18, 20, 21, 22, 22, 23, 24, 25, 26, 27, 28, 29, 30, 27, 29, 28, 30, 30, 29\}\\
\{2, 3, 4, 5, 6, 7, 8, 9, 10, 7, 9, 8, 11, 10, 12, 13, 14, 14, 15, 16, 17, 18, 19, 20, 18, 21, 18, 20, 21, 22, 23, 24, 25, 26, 23, 27, 28, 27, 29, 26, 30, 29, 30, 29, 30\}\\
\{2, 3, 4, 5, 6, 7, 8, 9, 10, 7, 9, 8, 11, 10, 12, 13, 14, 14, 15, 16, 17, 18, 19, 20, 18, 21, 18, 20, 22, 23, 22, 24, 24, 25, 26, 27, 28, 29, 30, 27, 28, 29, 30, 29, 30\}\\
\end{tiny}
\subsection{Snarks  from Observation \ref{obs:noext2}}\label{app:no5cdc2}
{\ \\}The 44 cyclically 5-edge connected snarks on $n=36$ vertices with 2-regular subgraphs which are part of a CDC but not part of any 5-CDC:\\
\begin{tiny}
\{2, 3, 4, 31, 36, 33, 35, 32, 34, 12, 26, 30, 10, 11, 28, 9, 13, 30, 16, 25, 29, 18, 27, 12, 17, 14, 29, 15, 20, 25, 19, 26, 16, 21, 22, 18, 22, 21, 24, 28, 24, 27, 23, 23, 31, 31, 33, 32, 33, 32, 35, 34, 36, 36\}\\
\{2, 3, 4, 31, 36, 33, 35, 32, 34, 12, 26, 30, 10, 11, 28, 9, 13, 30, 18, 25, 29, 16, 27, 12, 17, 14, 29, 15, 20, 25, 19, 26, 16, 21, 22, 18, 22, 21, 24, 28, 24, 27, 23, 23, 31, 31, 33, 32, 33, 32, 35, 34, 36, 36\}\\
\{2, 3, 4, 31, 34, 33, 36, 32, 35, 13, 15, 17, 14, 16, 18, 10, 12, 26, 9, 11, 25, 14, 21, 13, 22, 15, 20, 16, 19, 27, 28, 30, 29, 20, 24, 19, 23, 36, 35, 22, 33, 32, 30, 34, 29, 34, 28, 33, 27, 32, 31, 31, 36, 35\}\\
\{2, 3, 4, 31, 36, 33, 35, 32, 34, 18, 26, 27, 14, 15, 29, 9, 18, 25, 10, 17, 30, 12, 16, 13, 28, 17, 20, 27, 19, 26, 22, 29, 22, 28, 16, 20, 21, 21, 19, 23, 24, 24, 23, 31, 31, 30, 33, 33, 32, 32, 35, 34, 36, 36\}\\
\{2, 3, 4, 31, 36, 33, 35, 32, 34, 10, 17, 28, 13, 25, 27, 8, 15, 26, 16, 29, 11, 15, 30, 12, 29, 14, 18, 19, 27, 18, 22, 21, 26, 21, 17, 22, 20, 20, 24, 28, 23, 24, 23, 31, 31, 30, 33, 33, 32, 32, 35, 34, 36, 36\}\\
\{2, 3, 4, 33, 35, 34, 36, 31, 32, 15, 20, 22, 14, 17, 21, 8, 18, 20, 16, 19, 11, 13, 24, 15, 16, 28, 12, 27, 14, 30, 19, 26, 23, 24, 29, 18, 25, 26, 25, 23, 27, 30, 28, 29, 33, 34, 32, 32, 36, 31, 35, 31, 34, 36\}\\
\{2, 3, 4, 33, 35, 34, 36, 31, 32, 16, 19, 20, 14, 17, 21, 8, 18, 20, 15, 22, 11, 13, 24, 15, 16, 28, 12, 27, 14, 30, 19, 26, 23, 24, 29, 18, 25, 26, 25, 23, 27, 30, 28, 29, 33, 34, 32, 32, 36, 31, 35, 31, 34, 36\}\\
\{2, 3, 4, 31, 36, 33, 35, 32, 34, 9, 10, 30, 11, 14, 29, 8, 9, 14, 11, 13, 12, 16, 25, 18, 15, 25, 17, 26, 17, 21, 30, 19, 28, 22, 20, 27, 20, 23, 24, 22, 24, 23, 31, 31, 33, 29, 32, 28, 33, 32, 35, 34, 36, 36\}\\
\{2, 3, 4, 34, 36, 32, 33, 31, 35, 9, 17, 20, 9, 13, 29, 8, 16, 30, 14, 15, 19, 12, 15, 26, 12, 17, 22, 28, 16, 24, 18, 25, 27, 25, 23, 21, 23, 24, 26, 21, 22, 32, 36, 36, 34, 31, 35, 28, 33, 35, 33, 34, 31, 32\}\\
\{2, 3, 4, 31, 36, 33, 34, 32, 35, 6, 13, 28, 12, 27, 13, 17, 27, 9, 11, 26, 12, 18, 11, 12, 25, 16, 30, 15, 16, 30, 17, 29, 21, 22, 20, 25, 20, 24, 28, 23, 22, 24, 23, 31, 31, 33, 29, 33, 32, 32, 35, 34, 36, 36\}\\
\{2, 3, 4, 31, 36, 33, 34, 32, 35, 9, 25, 28, 7, 18, 26, 8, 10, 11, 15, 12, 30, 16, 30, 14, 29, 19, 27, 16, 21, 27, 20, 26, 17, 21, 22, 22, 25, 20, 29, 24, 28, 24, 23, 23, 31, 31, 33, 33, 32, 32, 35, 34, 36, 36\}\\
\{2, 3, 4, 31, 36, 33, 34, 32, 35, 9, 25, 28, 7, 18, 26, 8, 10, 11, 15, 12, 30, 17, 30, 14, 29, 19, 27, 15, 21, 27, 20, 26, 22, 17, 22, 25, 21, 20, 29, 24, 28, 24, 23, 23, 31, 31, 33, 33, 32, 32, 35, 34, 36, 36\}\\
\{2, 3, 4, 31, 36, 33, 34, 32, 35, 9, 25, 28, 7, 18, 26, 8, 17, 10, 15, 12, 30, 11, 30, 14, 29, 19, 27, 15, 21, 27, 20, 26, 22, 17, 22, 25, 21, 20, 29, 24, 28, 24, 23, 23, 31, 31, 33, 33, 32, 32, 35, 34, 36, 36\}\\
\{2, 3, 4, 31, 36, 32, 34, 33, 35, 19, 21, 23, 20, 22, 23, 10, 14, 29, 9, 13, 24, 10, 12, 11, 16, 27, 18, 25, 17, 25, 15, 27, 22, 26, 19, 29, 20, 28, 21, 24, 20, 22, 36, 32, 34, 28, 34, 33, 33, 31, 31, 32, 35, 36\}\\
\{2, 3, 4, 31, 36, 33, 35, 32, 34, 11, 16, 28, 7, 14, 26, 8, 15, 9, 10, 13, 30, 17, 29, 12, 29, 19, 27, 21, 26, 21, 30, 18, 22, 17, 22, 20, 20, 25, 24, 28, 23, 24, 23, 31, 31, 27, 33, 33, 32, 32, 35, 34, 36, 36\}\\
\{2, 3, 4, 31, 36, 33, 35, 32, 34, 11, 16, 28, 7, 14, 26, 8, 15, 9, 10, 13, 30, 17, 29, 12, 29, 19, 27, 21, 26, 21, 30, 16, 20, 22, 18, 22, 20, 25, 24, 28, 23, 24, 23, 31, 31, 27, 33, 33, 32, 32, 35, 34, 36, 36\}\\
\{2, 3, 4, 31, 36, 33, 34, 32, 35, 6, 13, 28, 10, 27, 13, 18, 27, 11, 12, 26, 10, 12, 15, 11, 17, 16, 30, 16, 25, 30, 17, 29, 22, 21, 20, 25, 22, 24, 28, 21, 23, 24, 23, 31, 31, 33, 29, 33, 32, 32, 35, 34, 36, 36\}\\
\{2, 3, 4, 31, 36, 33, 34, 32, 35, 6, 13, 28, 10, 27, 13, 18, 27, 11, 12, 26, 10, 12, 15, 11, 17, 16, 30, 16, 25, 30, 17, 29, 21, 22, 20, 25, 22, 24, 28, 21, 23, 24, 23, 31, 31, 33, 29, 33, 32, 32, 35, 34, 36, 36\}\\
\{2, 3, 4, 31, 36, 33, 35, 32, 34, 6, 12, 30, 11, 28, 10, 14, 30, 12, 18, 26, 17, 25, 29, 15, 27, 13, 29, 16, 20, 26, 19, 25, 16, 21, 22, 18, 22, 21, 23, 27, 23, 28, 24, 24, 31, 31, 33, 32, 33, 32, 35, 34, 36, 36\}\\
\{2, 3, 4, 31, 36, 33, 34, 32, 35, 6, 9, 26, 8, 28, 9, 13, 27, 10, 30, 11, 16, 26, 18, 29, 17, 25, 30, 17, 29, 15, 16, 25, 21, 28, 19, 22, 20, 27, 20, 23, 24, 22, 24, 23, 31, 31, 33, 32, 33, 32, 35, 34, 36, 36\}\\
\{2, 3, 4, 31, 36, 33, 34, 32, 35, 9, 25, 28, 7, 18, 26, 10, 16, 10, 11, 15, 12, 30, 30, 14, 29, 19, 27, 16, 21, 27, 20, 26, 17, 21, 22, 22, 25, 20, 29, 24, 28, 24, 23, 23, 31, 31, 33, 33, 32, 32, 35, 34, 36, 36\}\\
\{2, 3, 4, 31, 36, 33, 34, 32, 35, 9, 25, 28, 7, 18, 26, 10, 17, 10, 11, 15, 12, 30, 30, 14, 29, 19, 27, 15, 21, 27, 20, 26, 22, 17, 22, 25, 21, 20, 29, 24, 28, 24, 23, 23, 31, 31, 33, 33, 32, 32, 35, 34, 36, 36\}\\
\{2, 3, 4, 31, 36, 33, 35, 32, 34, 6, 9, 30, 8, 13, 8, 11, 14, 12, 16, 25, 16, 28, 29, 18, 27, 18, 26, 15, 25, 17, 26, 21, 30, 19, 22, 28, 20, 20, 23, 24, 22, 24, 23, 31, 31, 33, 32, 29, 32, 33, 35, 34, 36, 36\}\\
\{2, 3, 4, 31, 36, 33, 35, 32, 34, 6, 9, 30, 8, 13, 8, 11, 12, 14, 16, 25, 16, 27, 28, 18, 26, 17, 29, 15, 25, 17, 26, 21, 30, 19, 22, 20, 28, 20, 23, 24, 22, 24, 23, 31, 31, 33, 32, 29, 32, 33, 35, 34, 36, 36\}\\
\{2, 3, 4, 31, 36, 33, 35, 32, 34, 6, 9, 30, 8, 13, 8, 11, 12, 14, 16, 25, 16, 28, 29, 18, 26, 17, 27, 15, 25, 17, 26, 21, 30, 19, 22, 20, 28, 20, 23, 24, 22, 24, 23, 31, 31, 33, 32, 29, 32, 33, 35, 34, 36, 36\}\\
\{2, 3, 4, 31, 36, 32, 35, 33, 34, 6, 7, 30, 21, 29, 27, 28, 20, 21, 26, 18, 19, 21, 11, 12, 26, 20, 25, 19, 24, 17, 18, 22, 16, 18, 23, 16, 17, 19, 22, 27, 24, 36, 29, 33, 32, 28, 34, 31, 34, 33, 32, 31, 35, 36\}\\
\{2, 3, 4, 31, 36, 32, 35, 33, 34, 6, 7, 30, 21, 29, 27, 28, 20, 21, 26, 18, 19, 21, 11, 12, 26, 20, 25, 19, 24, 17, 18, 28, 16, 18, 22, 16, 17, 19, 23, 22, 24, 36, 29, 34, 32, 27, 33, 31, 34, 33, 32, 31, 35, 36\}\\
\{2, 3, 4, 31, 36, 32, 34, 33, 35, 6, 7, 30, 21, 29, 20, 28, 9, 14, 26, 17, 25, 11, 12, 28, 16, 22, 13, 15, 14, 24, 23, 19, 22, 19, 23, 18, 24, 20, 26, 21, 29, 27, 36, 34, 33, 27, 31, 33, 34, 32, 32, 31, 35, 36\}\\
\{2, 3, 4, 31, 36, 32, 34, 33, 35, 6, 7, 30, 21, 29, 20, 28, 9, 13, 26, 17, 25, 11, 12, 28, 16, 22, 14, 15, 16, 23, 23, 24, 19, 22, 19, 18, 24, 20, 26, 21, 29, 27, 36, 34, 33, 27, 31, 33, 34, 32, 32, 31, 35, 36\}\\
\{2, 3, 4, 31, 36, 32, 34, 33, 35, 6, 7, 30, 21, 29, 20, 28, 9, 13, 26, 17, 25, 11, 12, 28, 16, 22, 13, 15, 14, 23, 24, 19, 22, 19, 23, 18, 24, 20, 26, 21, 29, 27, 36, 34, 33, 27, 31, 33, 34, 32, 32, 31, 35, 36\}\\
\{2, 3, 4, 31, 36, 32, 35, 33, 34, 6, 7, 30, 21, 29, 27, 28, 20, 21, 26, 18, 19, 21, 16, 28, 29, 12, 13, 26, 20, 25, 19, 23, 17, 22, 24, 16, 18, 22, 17, 19, 24, 23, 36, 32, 34, 27, 33, 31, 34, 33, 32, 31, 35, 36\}\\
\{2, 3, 4, 33, 36, 31, 34, 32, 35, 6, 8, 16, 9, 11, 10, 11, 15, 12, 27, 12, 21, 17, 29, 24, 22, 14, 20, 25, 26, 28, 23, 30, 19, 22, 23, 24, 19, 20, 21, 31, 34, 33, 36, 36, 35, 29, 31, 30, 32, 28, 32, 35, 34, 33\}\\
\{2, 3, 4, 31, 36, 32, 34, 33, 35, 6, 7, 30, 21, 29, 20, 28, 12, 24, 27, 10, 15, 22, 11, 16, 14, 26, 13, 25, 17, 26, 19, 22, 19, 24, 18, 23, 20, 27, 21, 28, 21, 29, 36, 25, 34, 34, 31, 33, 33, 32, 32, 31, 35, 36\}\\
\{2, 3, 4, 31, 36, 32, 35, 33, 34, 6, 7, 30, 28, 29, 20, 21, 9, 10, 25, 23, 24, 15, 27, 14, 17, 22, 13, 17, 29, 16, 22, 16, 26, 19, 23, 19, 18, 21, 25, 21, 26, 27, 36, 32, 28, 33, 32, 34, 31, 34, 33, 31, 35, 36\}\\
\{2, 3, 4, 31, 36, 33, 35, 32, 34, 6, 11, 30, 12, 28, 10, 14, 30, 11, 16, 26, 17, 25, 29, 15, 27, 18, 13, 29, 20, 26, 19, 25, 16, 21, 22, 18, 22, 21, 23, 27, 23, 28, 24, 24, 31, 31, 33, 32, 33, 32, 35, 34, 36, 36\}\\
\{2, 3, 4, 31, 36, 33, 35, 32, 34, 8, 9, 18, 7, 10, 17, 8, 16, 15, 12, 14, 11, 13, 12, 24, 23, 16, 20, 15, 19, 26, 25, 20, 22, 19, 21, 36, 36, 28, 35, 26, 34, 30, 32, 29, 35, 27, 32, 33, 30, 31, 29, 31, 33, 34\}\\
\{2, 3, 4, 31, 36, 33, 35, 32, 34, 8, 13, 30, 7, 8, 27, 9, 28, 11, 10, 29, 16, 27, 15, 25, 18, 26, 29, 18, 25, 16, 17, 26, 22, 30, 19, 21, 28, 20, 20, 23, 24, 22, 24, 23, 31, 31, 33, 33, 32, 32, 35, 34, 36, 36\}\\
\{2, 3, 4, 31, 36, 33, 34, 32, 35, 7, 13, 28, 9, 18, 26, 12, 30, 9, 14, 30, 15, 11, 16, 18, 15, 29, 19, 27, 16, 20, 17, 20, 21, 22, 22, 25, 21, 24, 28, 23, 24, 23, 31, 31, 27, 33, 29, 33, 32, 32, 35, 34, 36, 36\}\\
\{2, 3, 4, 31, 36, 33, 35, 32, 34, 8, 17, 28, 13, 25, 27, 15, 16, 26, 12, 29, 11, 16, 30, 11, 18, 29, 14, 19, 27, 15, 20, 21, 26, 22, 21, 18, 22, 20, 24, 28, 23, 24, 23, 31, 31, 30, 33, 33, 32, 32, 35, 34, 36, 36\}\\
\{2, 3, 4, 31, 36, 33, 35, 32, 34, 7, 9, 28, 14, 15, 26, 12, 29, 11, 13, 30, 10, 16, 18, 25, 17, 29, 19, 27, 21, 26, 21, 30, 18, 22, 17, 22, 20, 20, 24, 28, 23, 24, 23, 31, 31, 27, 33, 33, 32, 32, 35, 34, 36, 36\}\\
\{2, 3, 4, 31, 36, 33, 34, 32, 35, 7, 14, 28, 13, 27, 29, 12, 30, 9, 15, 30, 10, 16, 18, 26, 17, 18, 25, 19, 27, 15, 20, 17, 22, 22, 21, 25, 20, 21, 24, 28, 23, 24, 23, 31, 31, 33, 29, 33, 32, 32, 35, 34, 36, 36\}\\
\{2, 3, 4, 31, 36, 33, 35, 32, 34, 7, 8, 26, 8, 12, 28, 10, 27, 11, 11, 12, 25, 17, 30, 15, 18, 14, 18, 30, 16, 29, 19, 28, 22, 27, 20, 26, 21, 20, 23, 24, 22, 24, 23, 31, 31, 29, 32, 33, 33, 32, 35, 34, 36, 36\}\\
\{2, 3, 4, 32, 33, 31, 34, 35, 36, 7, 11, 14, 7, 8, 18, 15, 9, 13, 10, 17, 12, 19, 12, 16, 21, 24, 29, 20, 23, 16, 28, 23, 18, 24, 22, 25, 26, 22, 27, 30, 33, 32, 36, 36, 28, 35, 27, 34, 35, 34, 31, 33, 31, 32\}\\
\{2, 3, 4, 32, 33, 34, 36, 31, 35, 7, 11, 14, 7, 8, 18, 15, 9, 13, 10, 17, 12, 19, 12, 16, 21, 24, 29, 20, 23, 16, 28, 23, 18, 24, 22, 25, 26, 22, 27, 30, 33, 32, 36, 36, 28, 35, 27, 34, 35, 34, 31, 33, 31, 32\}\\
\end{tiny}

\subsection{Counterexamples to Conjectures \ref{conj:zhang}, \ref{conj:ex1}, \ref{conj:ex2} and \ref{FGJ3}}\label{app:ex1}
The 12 cyclically 5-edge connected permutation snarks on 34 vertices:\\
\begin{tiny}
\{9, 11, 17, 6, 10, 25, 8, 12, 31, 8, 9, 16, 21, 26, 27, 16, 27, 11, 23, 33, 23, 10, 12, 14, 18, 19, 24, 29, 22, 34, 16, 17, 29, 20, 22, 32, 20, 26, 30, 22, 31, 24, 34, 26, 28, 28, 33, 30, 32, 32, 34\}\\
\{11, 13, 27, 14, 15, 20, 5, 29, 33, 6, 9, 23, 28, 31, 15, 31, 8, 11, 20, 25, 33, 17, 30, 12, 14, 30, 16, 16, 18, 14, 21, 24, 24, 18, 19, 29, 22, 27, 34, 22, 28, 26, 25, 32, 32, 26, 34, 28, 30, 32, 34\}\\
\{9, 17, 31, 7, 18, 25, 8, 12, 26, 6, 8, 9, 13, 19, 26, 15, 19, 8, 11, 23, 21, 23, 29, 16, 17, 16, 29, 14, 27, 21, 25, 18, 33, 22, 24, 20, 34, 24, 34, 22, 30, 32, 32, 28, 28, 28, 31, 30, 33, 32, 34\}\\
\{21, 23, 33, 14, 15, 19, 5, 27, 29, 6, 9, 25, 31, 34, 15, 31, 11, 19, 28, 18, 25, 28, 10, 30, 12, 14, 16, 23, 22, 29, 14, 17, 24, 26, 22, 26, 18, 33, 20, 20, 27, 22, 30, 24, 34, 32, 32, 28, 30, 32, 34\}\\
\{9, 15, 19, 7, 10, 23, 12, 16, 21, 8, 17, 22, 6, 19, 31, 13, 23, 8, 11, 16, 17, 27, 12, 33, 14, 25, 24, 22, 32, 18, 29, 16, 25, 20, 20, 26, 28, 28, 22, 31, 34, 26, 34, 30, 30, 28, 33, 30, 32, 32, 34\}\\
\{15, 17, 31, 13, 19, 33, 5, 8, 30, 8, 9, 25, 15, 32, 13, 25, 34, 8, 22, 29, 10, 27, 23, 26, 17, 22, 24, 14, 21, 26, 16, 16, 24, 16, 18, 20, 32, 20, 23, 34, 22, 30, 24, 28, 28, 28, 31, 30, 33, 32, 34\}\\
\{9, 17, 21, 7, 18, 23, 5, 12, 16, 8, 13, 29, 6, 31, 23, 25, 11, 33, 20, 26, 10, 27, 12, 18, 14, 15, 24, 14, 25, 24, 21, 34, 20, 32, 19, 28, 28, 20, 31, 22, 26, 33, 24, 30, 30, 28, 29, 30, 32, 34, 34\}\\
\{9, 15, 31, 13, 32, 33, 21, 24, 34, 6, 9, 27, 6, 15, 24, 19, 8, 11, 23, 18, 34, 10, 26, 32, 14, 17, 14, 26, 29, 28, 30, 16, 22, 22, 30, 20, 25, 20, 27, 20, 28, 22, 29, 24, 33, 26, 31, 28, 30, 32, 34\}\\
\{11, 19, 25, 10, 13, 17, 8, 12, 31, 6, 8, 9, 15, 18, 33, 23, 29, 11, 24, 26, 30, 10, 25, 16, 27, 14, 16, 22, 23, 22, 32, 16, 19, 18, 26, 20, 34, 21, 34, 22, 31, 24, 30, 28, 28, 28, 32, 30, 33, 32, 34\}\\
\{9, 15, 33, 6, 7, 22, 12, 27, 29, 8, 17, 23, 19, 25, 29, 13, 25, 8, 28, 16, 10, 17, 22, 34, 14, 15, 28, 31, 34, 14, 32, 18, 16, 27, 26, 24, 31, 20, 21, 24, 30, 22, 33, 24, 32, 26, 30, 28, 30, 32, 34\}\\
\{9, 11, 19, 7, 13, 25, 17, 20, 29, 6, 9, 23, 15, 17, 31, 13, 31, 8, 21, 23, 29, 10, 16, 25, 21, 33, 14, 27, 30, 18, 32, 33, 16, 26, 27, 18, 32, 20, 34, 24, 22, 26, 34, 24, 30, 28, 28, 28, 30, 32, 34\}\\
\{9, 13, 17, 6, 10, 25, 5, 12, 31, 6, 8, 9, 6, 19, 14, 32, 33, 23, 33, 34, 12, 29, 12, 22, 32, 19, 27, 27, 30, 18, 21, 26, 24, 26, 29, 22, 28, 20, 23, 20, 28, 22, 31, 24, 34, 26, 30, 28, 30, 32, 34\}\\
 \end{tiny}

\end{document}